\def\@tocline#1#2#3#4#5#6#7{\relax
  \ifnum #1>\c@tocdepth 
  \else
    \par \addpenalty\@secpenalty\addvspace{#2}%
    \begingroup \hyphenpenalty\@M
    \@ifempty{#4}{%
      \@tempdima\csname r@tocindent\number#1\endcsname\relax
    }{%
      \@tempdima#4\relax
    }%
    \parindent\z@ \leftskip#3\relax \advance\leftskip\@tempdima\relax
    \rightskip\@pnumwidth plus4em \parfillskip-\@pnumwidth
    #5\leavevmode\hskip-\@tempdima
      \ifcase #1
       \or\or \hskip 1em \or \hskip 2em \else \hskip 3em \fi%
      #6\nobreak\relax
    \dotfill\hbox to\@pnumwidth{\@tocpagenum{#7}}\par
    \nobreak
    \endgroup
  \fi}
\newtheoremstyle{fancy}{}{}{\itshape}{}{\textbf\bgroup}{.\egroup}{ }{}
\newtheoremstyle{fanci}{}{}{\rm}{}{\textbf\bgroup}{.\egroup}{ }{}
\newtheoremstyle{ghost}{}{}{\itshape}{}{\textbf\bgroup}{\egroup}{ }{}
\theoremstyle{fancy}
\numberwithin{equation}{section} \swapnumbers
\newtheorem{cor}[equation]{Corollary}
\newtheorem{lem}[equation]{Lemma}
\newtheorem{prop}[equation]{Proposition}
\newtheorem{thm}[equation]{Theorem}
\newtheorem*{LocallyHomogeneous}{Simply-Connected Homogeneous Three-Manifolds Admitting Compact Quotients}
\theoremstyle{fanci}
\newtheorem{dfn}[equation]{Definition}
\newtheorem{exa}[equation]{Example}
\newtheorem{rem}[equation]{Remark}
\newtheorem{nota}[equation]{Notation}
\newtheorem*{Structure}{Structure of the Paper}
\newtheorem{comp}[equation]{Computation}
\newcommand{\cref}[1]{Corollary~\ref{#1}}   
\newcommand{\Z}{\mathbb Z}  
\newcommand{\R}{\mathbb R} 
\newcommand{\N}{\mathbb N} 
\newcommand{\Orthog}{\operatorname{O}}
\newcommand{\SO}{\operatorname{SO}}
\newcommand{\ad}{\operatorname{ad}}
\newcommand{\Ad}{\operatorname{Ad}}
\newcommand{\SL}{\operatorname{SL}}
\newcommand{\SU}{\operatorname{SU}}
\newcommand{\Inner}{\langle \cdot , \cdot \rangle}
\newcommand{\BiInner}{\langle \cdot , \cdot \rangle_0}
\newcommand{\Isom}{\operatorname{Isom}}
\newcommand{\Diff}{\operatorname{Diff}}
\newcommand{\Ric}{\operatorname{Ric}}
\newcommand{\Scal}{\operatorname{Sc}}
\newcommand{\vol}{\operatorname{vol}}
\newcommand{\germ}{\mathfrak}
\renewcommand{\phi}{\varphi}
\renewcommand{\emptyset}{\varnothing}
\newcommand{\trace}{\operatorname{trace}}  
\newcommand{\Span}{\operatorname{Span}}
\newcommand{\vecv}{{\bf v}}
\newcommand{\vece}{{\bf e}}
\begin{document}

\newcommand{\spacing}[1]{\renewcommand{\baselinestretch}{#1}\large\normalsize}
\spacing{1.14}

\title{Geometric Structures and the Laplace Spectrum, Part II}

\author[S. Lin]{Samuel Lin}
\address{Dartmouth College\\ Department of Mathematics \\ Hanover, NH 03755}
\email{samuel.lin@dartmouth.edu}
\author[B. Schmidt]{Benjamin Schmidt}
\address{Michigan State University\\ Department of Mathematics \\ E. Lansing, MI 48824}
\email{schmidt@math.msu.edu}
\author[C. Sutton]{Craig Sutton$^\sharp$}
\address{Dartmouth College\\ Department of Mathematics \\ Hanover, NH 03755}
\email{craig.j.sutton@dartmouth.edu}
\thanks{$^\sharp$ The third named author was partially supported by a Simons Foundation Collaboration Grant}
\subjclass[2010]{58J50, 53C20}
\keywords{Laplace spectrum, heat invariants, locally homogeneous spaces, three-manifolds}



\begin{abstract}
We continue our exploration of the extent to which the spectrum encodes the local geometry of a locally homogeneous three-manifold and find that if $(M,g)$ and $(N,h)$ are a pair of locally homogeneous, locally non-isometric isospectral three-manifolds, where $M$ is an elliptic three-manifold, then $(1)$ $N$ is also an elliptic three-manifold, $(2)$ $M$ and $N$ have fundamental groups of different orders, $(3)$ $(M,g)$ and $(N,h)$ both have non-degenerate Ricci tensors and $(4)$ the metrics $g$ and $h$ are sufficiently far from a metric of constant sectional curvature. We are unaware of any such isospectral pair and such a pair could not arise via the classical Sunada method. 
As part of the proof, we provide an explicit description of the isometry group of a compact simple Lie group equipped with a left-invariant metric---improving upon the results of Ochiai-Takahashi and Onishchik---which we use to classify the locally homogeneous metrics on an elliptic three-manifold $\Gamma \backslash S^3$ and we determine that any collection of isospectral locally homogeneous metrics on an elliptic three-manifold consists of at most two isometry classes that are necessarily locally isometric. In particular, the left-invariant metrics on $\SO(3)$ (respectively, $S^3$) can be mutually distinguished via their spectra. The previous statement has the following interpretation in terms of physical chemistry: the moments of inertia of a molecule can be recovered from its rotational spectrum.
\end{abstract}

\maketitle

\tableofcontents

\setcounter{section}{0}

\section{Introduction}\label{sec:introduction}
 
In Klein's Erlangen program the simply-connected $n$-dimensional Riemannian manifolds $\mathbb{E}^n$, $\mathbb{S}^n$  and $\mathbb{H}^n$ of constant sectional curvature zero, one and negative one, respectively, correspond to the maximal geometries $(\R^n, \Isom(\mathbb{E}^n)^o)$, $(S^n, \Isom(\mathbb{S}^n)^o)$ or $(H^n, \Isom(\mathbb{H}^n)^o)$, respectively. In dimension two, these geometries account for all the maximal geometries and the Uniformization Theorem states that every closed surface $\Sigma$ admits geometric structures (i.e., complete locally homogeneous metrics) and the geometric structures supported by $\Sigma$ are all modeled on the same maximal two-dimensional geometry. Phrased differently, every closed surface $\Sigma$ admits metrics of constant sectional curvature and the sign (plus, zero or minus) of the sectional curvature is determined by the topology of the underlying surface. This demonstrates the importance of locally homogeneous metrics to the study of the geometry and topology of compact surfaces. And, it is a well-known result of Berger that a closed surface of constant sectional curvature---i.e., a compact locally homogeneous surface---is determined up to local isometry by its spectrum; specifically, its first three heat invariants \cite[Theorem 7.1]{Berger}. The numerous examples of isospectral Riemann surfaces demonstrate that this result is optimal. 

In dimension three, the simply-connected homogeneous spaces admitting compact quotients come in the following eight families.

\begin{LocallyHomogeneous}[cf. Theorem B of \cite{Sekigawa}]\label{thm:SimplyConnectedLocallyHomogeneous} 
A simply-connected homogeneous Riemannian three-manifold that admits a compact quotient is isometric to one of the following:
\begin{enumerate}
\item[(H1)] $\widetilde{E(2)} = \R^2 \rtimes_{\phi} \R$, the universal covering of the isometry group of $\mathbb{E}^2$, equipped with a left-invariant metric, where $\phi(\theta)$ is counterclockwise rotation of the plane through $2\pi\theta$; 
\item[(H2)] $S^3$ equipped with a left-invariant metric;
\item[(H3)] $\mathbb{H}^3$ equipped with a hyperbolic metric;
\item[(H4)] $S^2 \times \R$ equipped with a product of a round metric with the standard Euclidean metric;
\item[(H5)] $\mathbb{H}^2 \times \R$ equipped with a product of a hyperbolic metric with the standard euclidean metric;
\item[(H6)] $\operatorname{Nil}$ equipped with a left-invariant metric;
\item[(H7)] $\widetilde{\operatorname{SL}_2(\R)}$ equipped with a left-invariant metric;
\item[(H8)] $\operatorname{Sol}$ equipped with a left-invariant metric.
\end{enumerate}
\end{LocallyHomogeneous}
  
 \noindent
 And, the Geometrization Theorem---proven through the efforts of Hamilton \cite{Ham82, Ham86, Ham95, Ham99} and Perelman \cite{Perelman1, Perelman2, Perelman3} (cf. \cite{KL})---illuminates the importance these geometries play in understanding the geometry and topology of closed three-manifolds. 
 
Partially inspired by Berger's observation that the spectrum determines the universal Riemannian cover (i.e., determines the local geometry) of a closed surface of constant sectional curvature, we previously initiated the exploration of the extent to which compact locally homogeneous three-manifolds are determined up to local isometry by their spectra \cite{LSS}. We found that any closed three-manifold modeled on (1) a simply-connected symmetric space, (2) $\widetilde{E(2)}$ equipped with a left-invariant metric (which includes the Euclidean metric on $\R^3$), or (3) $\operatorname{Nil}$ equipped with a left-invariant metric is determined up to local isometry by its spectrum---more specifically, its first four heat invariants---among compact locally homogeneous three-manifolds \cite[Theorem 1]{LSS}. In particular, we found that six of the eight Thurston geometries (see \cite{Thurston, Scott}) are ``audible'' when compared to compact locally homogeneous three-manifolds.  

Additionally, recalling that a closed locally homogeneous three-manifold modeled on the (metrically maximal) $S^3$-geometry is defined to be a space whose universal Riemannian cover is isometric to $S^3$ equipped with a left-invariant metric \cite{LSS}, we found that the property of being a closed three-manifold modeled on the $S^3$-geometry is audible among compact locally homogeneous three-manifolds. Furthermore, if a closed Riemanian three-manifold modeled on the $S^3$-geometry is sufficiently close to a metric of constant positive sectional curvature, then it is determined up to local isometry by its spectrum within the class of locally homogeneous three-manifolds. 

In this article we delve deeper into the audibility of the local geometry of a space modeled on the $S^3$-geometry; i.e., the locally homogeneous elliptic three-manifolds.

\subsection{The classification of locally homogeneous elliptic three-manifolds.}
A result of Singer implies that the universal Riemannian cover of any \emph{complete} locally homogeneous manifold must be a homogeneous space \cite[p. 692]{Singer}. It then follows from the classification of simply-connected homogeneous three-manifolds admitting a compact quotient (see p.~\pageref{thm:SimplyConnectedLocallyHomogeneous}) that the Riemannian universal cover of a closed locally homogeneous three-manifold $(M,g)$ with a finite fundamental group is isometric to $S^3$ equipped with a left-invariant metric $\widetilde{g}$. That is, the locally homogeneous three-manifolds with finite fundamental group are precisely the closed three-manifolds modeled on the $S^3$-geometry. Therefore, the underlying manifold $M$ is diffeomorphic to $\Gamma \backslash S^3$, where $\Gamma$ is a finite subgroup of $\Isom(S^3, \widetilde{g})$ acting freely. From Onishchik's work on the isometry groups of homogeneous spaces \cite[Theorems 4 and 6]{Onishchik} (see Theorem~\ref{thm:IsometryGroupofSimpleLieGroup}), one concludes that $\Isom(S^3, \widetilde{g}) \leq \Isom(\mathbb{S}^3) = \operatorname{O}(4)$. Hence, $M$ is a closed three-manifold admitting a metric of positive sectional curvature, and we deduce that the closed three-manifolds admitting geometric structures modeled on an $S^3$-geometry are precisely the elliptic three-manifolds, which have been classified by Seifert and Threlfall \cite{SeifertThrelfall}.

Indeed, by definition, an elliptic three-manifold is any quotient of $S^3$ by a finite subgroup $\Gamma$ of $\SO(4)$ acting freely, and any two elliptic three-manifolds $\Gamma_1 \backslash S^3$ and $\Gamma_2 \backslash S^3$ are diffeomorphic if and only if $\Gamma_1$ and $\Gamma_2$ are conjugate in $\operatorname{O}(4)$. The finite subgroups of $\SO(4)$ acting freely on $S^3$ fall into six infinite families, Type I - VI (see Definition~\ref{dfn:GroupTypes}), where the Type I groups account for all the finite cyclic subgroups, the Type II (respectively, Type IV) groups are the product of particular subgroups of dihedral (respectively, tetrahedral) groups with certain cyclic groups, and each group of Type III (respectively, Type V or  Type VI) is a product of the binary dihedral (respectively, binary octahedral or binary icosahedral) group with a cyclic group of relatively prime order. Letting $R(\theta)$ denote counterclockwise rotation of the euclidean plane through an angle of $\theta$, one can show that up to conjugation in $O(4)$, the Type I groups which act freely on $S^3$ are given by the cyclic groups $\Gamma_{q;1,p} \equiv R(\frac{2\pi}{q}) \oplus R(\frac{2\pi p}{q})$, where $q \geq 1$ and $p$ are relatively prime integers, and the corresponding quotients $L(q;1,p) \equiv \Gamma_{q;1,p} \backslash S^3$ are the three-dimensional \emph{lens spaces}\label{text:LensSpace}. Two lens spaces $L(q_1; 1, p_1)$ and $L(q_2; 1, p_2)$ are diffeomorphic if and only if $q_1 = q_2$ and $p_2 \equiv \pm p_1, \pm p_1^{-1} \mod q_1$.

Up to scaling, every elliptic three-manifold admits a unique metric of constant positive sectional curvature. The following classification theorem---which relies on our explicit description of the isometry group of a left-invariant metric on a compact simple Lie group (see Theorem~\ref{thm:IsometryGroupofSimpleLieGroup2})---establishes that every elliptic three-manifold admits locally homogeneous metrics of non-constant sectional curvature (see Figure~\ref{fig:LocHomogEllipticThreeManifolds}). Interestingly, when $q \geq 3$ and $p \equiv \pm 1 \mod q$, the lens space $L(q;1,p)$ admits pairs of locally isometric locally homogeneous metrics where one is homogeneous while the other is not.

\begin{figure}
\begin{tabular}{ | l || c | c | c |} \hline
& \makecell{$[g]$ is Berger with\\ Constant Curvature}  & \makecell{$[g]$ is Berger with\\ Non-Constant Curvature} & $[g]$ is generic \\ \hline \hline 
\makecell{$\Gamma$ is Trivial\\ or $\Z_2$} & \makecell{unique isometry class\\homogeneous} & \makecell{unique isometry class\\homogeneous} & \makecell{unique isometry class\\homogeneous} \\ \hline
\makecell{$\Gamma$ is $\Gamma_{q;1,p}$ \\ for $q \geq 3$ and $p \not\equiv \pm 1$} & \makecell{unique isometry class\\not homogeneous} & \makecell{two isometry classes\\ neither is homogeneous} & \makecell{none} \\ \hline
\makecell{$\Gamma$ is $\Gamma_{q;1,p}$\\ for $q \geq 3$ and $p \equiv \pm 1$} & \makecell{unique isometry class\\homogeneous} & \makecell{two isometry classes\\one is homogeneous} & \makecell{unique isometry class\\not homogeneous} \\ \hline
\makecell{$\Gamma$ is Type II-VI, \\ not binary dihedral or \\binary polyhedral}  & \makecell{unique isometry class\\ not homogeneous} & \makecell{unique isometry class\\ not homogeneous} & \makecell{none} \\ \hline
\makecell{$\Gamma$ is binary dihedral or \\ binary polyhedral} & \makecell{unique isometry class\\ not homogeneous} & \makecell{unique isometry class\\ not homogeneous} & \makecell{none} \\ \hline
\end{tabular}
\caption{The isometry classes of locally homogeneous metrics on $\Gamma \backslash S^3$. Here, $[g]$ is an isometry class of a left-invariant metric on $S^3$ and $\Gamma \leq \SO(4)$ is of Type I-VI.}
\label{fig:LocHomogEllipticThreeManifolds}
\end{figure}

\begin{thm}[Classification of locally homogeneous elliptic three-manifolds]\label{thm:LocHomogEllipticThreeManifolds} Let $M = \Gamma \backslash S^3$ be an elliptic three-manifold. And, by a Berger metric on $S^3$ we shall mean a left-invariant metric on $S^3$ obtained from the round metric by scaling the fibers of the Hopf fibration (see Definition~\ref{dfn:BergerMetrics}).

\begin{enumerate}
\item If $\Gamma$ is Type I and trivial (respectively, isomorphic to $\Z_2$, the case when $q=2$), then $M$ is $S^3$ (respectively, $\SO(3)$) and the isometry classes of locally homogeneous metrics on $M$ are precisely the isometry classes of the left-invariant metrics on $M$. Consequently, all locally homogeneous metrics on $M$ are homogeneous.

\item If $\Gamma$ is Type I and conjugate to $\Gamma_{q;1,p}$, where $q \geq 3$ and $p \not\equiv \pm 1 \mod q$, then each isometry class of a metric of constant sectional curvature on $S^3$ covers a unique isometry class of locally homogeneous metrics on $M$ and this class is not homogeneous, while a Berger metric of non-constant sectional curvature on $S^3$ covers precisely two isometry classes of locally homogeneous metrics on $M$ neither of which is homogeneous. Any isometry class of a locally homogeneous Riemannian metric on $M$ arises in this manner. 

\item If $\Gamma$ is Type I and conjugate to $\Gamma_{q;1,p}$, where $q \geq 3$ and $p \equiv \pm 1 \mod q$, then every isometry class of a left-invariant metric on $S^3$ covers an isometry class of locally homogeneous Riemannian metrics on $M$ and any isometry class of locally homogeneous Riemannian metrics on $M$ arises in this fashion. Specifically, we have the following. 
\begin{enumerate}
\item The isometry class of a metric of constant positive sectional curvature on $S^3$ covers a unique isometry class of locally homogeneous Riemannian metrics on $M$ and this class is homogeneous. 
\item The isometry class of a Berger metric of non-constant curvature on $S^3$ covers exactly two isometry classes of locally homogeneous Riemannian metrics on $M$ and precisely one of these classes is homogeneous.
\item The isometry class of a left-invariant metric on $S^3$ that is not a Berger metric covers a unique isometry class of locally homogeneous Riemannian metrics on $M$ and this class is not homogeneous.  
\end{enumerate}

\item If $\Gamma$ is Type II-VI, but not binary dihedral or binary polyhedral, then each isometry class of a Berger metric on $S^3$ covers a unique isometry class of locally homogeneous Riemannian metrics on $M$ and this class is not homogeneous. Any isometry class of a locally homogeneous metric on $M$ arises in this manner.

\item If $\Gamma$ is binary dihedral or binary polyhedral, then each isometry class of a left-invariant metric on $S^3$ covers a unique isometry class of locally homogeneous Riemannian metrics on $M$ and this class is homogeneous if and only if it is of constant sectional curvature. Any isometry class of a locally homogeneous Riemannian metric on $M$ arises in this manner. 
   
\end{enumerate}
In particular, given an isometry class of a locally homogeneous Riemannian metric on $M$ there is at most one other isometry class on $M$ that shares the same universal Riemannian cover (up to isometry). 
\end{thm}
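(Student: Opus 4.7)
The plan is to reduce the classification of locally homogeneous metrics on $M = \Gamma \backslash S^3$ to a count of $\Isom(S^3, \widetilde g_0)$-conjugacy classes of certain finite subgroups, and then to invoke Theorem~\ref{thm:IsometryGroupofSimpleLieGroup2} for the bookkeeping. First I would apply Singer's theorem together with the classification on p.~\pageref{thm:SimplyConnectedLocallyHomogeneous} to conclude that every closed locally homogeneous metric on $M$ has Riemannian universal cover $(S^3, \widetilde g_0)$ with $\widetilde g_0$ left-invariant. Hence $M$ admits a presentation $M \cong \Gamma' \backslash (S^3, \widetilde g_0)$ with $\Gamma' \leq \Isom(S^3, \widetilde g_0) \leq \operatorname{O}(4)$ (the latter inclusion being part of Theorem~\ref{thm:IsometryGroupofSimpleLieGroup2}), and since $\Gamma' \backslash S^3$ is diffeomorphic to $\Gamma \backslash S^3$ the subgroup $\Gamma'$ is $\operatorname{O}(4)$-conjugate to $\Gamma$. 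Two such metrics on $M$ are isometric if and only if their covering groups are conjugate inside $\Isom(S^3, \widetilde g_0)$, and $\Gamma' \backslash (S^3, \widetilde g_0)$ is homogeneous precisely when $N_{\Isom(S^3, \widetilde g_0)}(\Gamma')$ acts transitively on $S^3$. The problem thereby reduces to two tasks: for each isometry class $[\widetilde g_0]$, enumerate the $\Isom(S^3, \widetilde g_0)$-conjugacy classes of subgroups $\operatorname{O}(4)$-conjugate to $\Gamma$, and decide which admit a transitive normalizer.

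The second step exploits the trichotomy for $\Isom(S^3, \widetilde g_0)$ supplied by Theorem~\ref{thm:IsometryGroupofSimpleLieGroup2}: (i) if $\widetilde g_0$ has constant sectional curvature then $\Isom(S^3, \widetilde g_0) = \operatorname{O}(4)$; (ii) if $\widetilde g_0$ is Berger of non-constant curvature then $\Isom(S^3, \widetilde g_0)$ is the $4$-dimensional stabilizer of the Hopf fibration in $\operatorname{O}(4)$; (iii) if $\widetilde g_0$ is generic then $\Isom(S^3, \widetilde g_0)$ has identity component the left-translation copy $SU(2)_L$, extended by a small finite group of outer symmetries. Under the double cover $SU(2) \times SU(2) \to \operatorname{SO}(4)$ the cyclic subgroup $\Gamma_{q;1,p}$ lives in a maximal torus; the orientation-reversing complex conjugation that swaps the two $SU(2)$-factors carries $\Gamma_{q;1,p}$ to $\Gamma_{q;1,-p}$, while $\Gamma_{q;1,p}$ projects trivially to the second (resp.\ first) $SU(2)$-factor precisely when $p \equiv 1 \pmod q$ (resp.\ $p \equiv -1 \pmod q$). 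The Type III, V, VI binary (poly)hedral groups embed in $\operatorname{SO}(4)$ diagonally and therefore normalize $SU(2)_L$ for every left-invariant metric, whereas the remaining Type II--VI subgroups project non-trivially onto both factors in such a way that they sit inside $\Isom(S^3, \widetilde g_0)$ only when $\widetilde g_0$ is Berger, and their normalizer is transitive only when $\widetilde g_0$ has constant sectional curvature.

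The proof then proceeds by case analysis along the rows of Figure~\ref{fig:LocHomogEllipticThreeManifolds}. In case (1), $\Gamma \leq \{\pm 1\} = Z(SU(2))$, so every $\operatorname{O}(4)$-conjugate of $\Gamma$ lying in $\Isom(S^3, \widetilde g_0)$ is central in $SU(2)_L$, hence normalized by the transitive $SU(2)_L$-action. Cases (2) and (3) reduce to counting $\Isom$-orbits on $\{\Gamma_{q;1,p}, \Gamma_{q;1,-p}\}$: the round case merges them via complex conjugation into a single class, the Berger non-constant case breaks them into two distinct classes because the required conjugating element is no longer present, and the generic case admits $\Gamma_{q;1,p}$ at all only when $p \equiv \pm 1 \pmod q$; homogeneity in (3) is read off from whether $\Gamma_{q;1,p}$ lies in $SU(2)_L$ or $SU(2)_R$, exactly the dichotomy $p \equiv \pm 1$. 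Case (4) follows from the fact that the relevant Type II--VI groups fail to commute with any transitive subgroup of the Berger isometry group and do not embed at all in a generic $\Isom(S^3, \widetilde g_0)$, while case (5) uses the diagonal embedding of binary (poly)hedral groups, which is normalized by $SU(2)_L$ for every left-invariant metric yet gives rise to a homogeneous quotient only at the round metric, where additional symmetry is available.

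The main obstacle is the combinatorial bookkeeping in the cyclic case: one must carefully track how the various incarnations of $\Isom(S^3, \widetilde g_0)$ act on the maximal torus of $\operatorname{SO}(4)$ to refine $\operatorname{O}(4)$-conjugacy classes of $\Gamma_{q;1,p}$ to the finer $\Isom$-conjugacy classes, and to identify which outer automorphisms (complex conjugation, and the $SU(2)_L \leftrightarrow SU(2)_R$ swap) survive as $\widetilde g_0$ is deformed from round to Berger to generic. The final assertion that at most two isometry classes on $M$ share a universal Riemannian cover then follows immediately from the completed count, since no entry of Figure~\ref{fig:LocHomogEllipticThreeManifolds} exceeds two.
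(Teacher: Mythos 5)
Your overall strategy---reduce via Singer and Sekigawa to left-invariant metrics on $S^3$, compute the three possible isometry groups, count conjugacy classes of $\Gamma$ inside each to get the number of quotient isometry classes, and test homogeneity by transitivity of a normalizer---is essentially the paper's (Theorem~\ref{thm:LocHomogEllipticThreeManifoldsV2} together with Propositions~\ref{prop:IsometryGroupsS^3} and~\ref{lem:Centralizers}); your normalizer criterion is equivalent to Wolf's centralizer criterion, since $N(\Gamma)^o = Z(\Gamma)^o$ for discrete $\Gamma$ and transitivity of a Lie group action on a connected manifold passes to the identity component. However, there is a concrete error in your treatment of the binary dihedral and binary polyhedral groups. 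These groups do \emph{not} embed in $\SO(4)$ diagonally: the diagonal copy $\rho(\Delta {\bf P}^*) = \pi({\bf P}^*) \leq \SO(3)$ acts by conjugation and fixes $\pm 1 \in S^3$, so it does not act freely and cannot be the deck group of an elliptic three-manifold. The groups that act freely are, up to conjugacy, $\rho({\bf P}^* \times 1) \leq L = \rho(S^3 \times 1)$, acting by left translations. Moreover, your claim that they are ``normalized by $SU(2)_L$ for every left-invariant metric'' is false---${\bf P}^*$ is not normal in $S^3$---and, fed into your own homogeneity criterion (transitive normalizer implies homogeneous), it would force \emph{every} quotient $\rho({\bf P}^*\times 1)\backslash S^3$ to be homogeneous, contradicting part (5) of the very theorem you are proving. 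The correct mechanism is the reverse: $\rho({\bf P}^*\times 1)$ lies in $\Isom(S^3,g)$ for every left-invariant $g$ precisely because it consists of left translations, and it is centralized by $R = \rho(1\times S^3)$; since $R \leq \Isom(S^3,g)$ only when $g$ is bi-invariant, and since for non-bi-invariant $g$ the identity component of the centralizer of the nonabelian group ${\bf P}^*$ in $\Isom(S^3,g)$ is trivial (Proposition~\ref{lem:Centralizers}), the quotient is homogeneous exactly in the constant-curvature case.

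A second, smaller gap: in cases (2) and (3) you assert that for a Berger metric $h$ of non-constant curvature ``the required conjugating element is no longer present,'' but to get \emph{two} isometry classes you must rule out that \emph{any} element of $\Isom(S^3,h) = \rho(S^3\times S^1(h))\cup\rho(S^3\times q_h S^1(h))$ conjugates $\Gamma_{q;1,p}$ to $\Gamma_{q;1,-p}$, not merely that one particular element is missing. The paper does this by lifting a hypothetical isometry of the two quotients to $\widetilde{\Psi}\in\Isom(S^3,h)$, noting $\widetilde{\Psi}\in F_2\, N_{\operatorname{O}(4)}(\Gamma_{q;1,p})$ with $N_{\operatorname{O}(4)}(\Gamma_{q;1,p})\leq\SO(4)$, so that $\widetilde{\Psi}$ is orientation-reversing and would conjugate $\rho(S^3\times S^1_{\rm{std}})$ to $\rho(S^1_{\rm{std}}\times S^3)$---impossible for an isometry of $h$. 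Seifert--Threlfall only tells you that the conjugating elements lie in $\operatorname{O}(4)\setminus\SO(4)$; you still need an argument that none of them preserves the metric. You should also record the routine but necessary step that different left-invariant representatives of the same isometry class $[g]$ containing $\Gamma$ in their isometry groups yield isometric quotients (the paper's $F = L_{x^{-1}}\circ C_x$ device).
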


In summary, given an elliptic three-manifold $M = \Gamma \backslash S^3$, an isometry class of a left-invariant metric on $S^3$ covers zero, one, or two locally homogeneous isometry classes on $M$---according to the conjugacy class of $\Gamma$ inside $O(4)$---and every isometry class of a locally homogeneous metric on $M$ arises in this fashion. Every isometry class of a Berger metric on $S^3$ covers an isometry class of a locally homogeneous metric on $M$, and an isometry class of a left-invariant metric $g$ on $S^3$ covers two distinct locally homogeneous isometry classes on $M$ if and only if $g$ is a Berger metric of non-constant sectional curvature and $\Gamma$ is conjugate to $\Gamma_{q; 1, p}$ for some $q \geq 3$. When $p \not\equiv \pm 1 \mod q$ neither of the isometry classes on $L(q;1,p)$ covered by a Berger metric is homogeneous. However, when $p \equiv \pm 1 \mod q$, precisely one of the isometry classes covered by a Berger metric is homogeneous (cf. Theorem~\ref{thm:LocHomogEllipticThreeManifoldsV2}). 

After observing that any compact homogeneous three-manifold is covered by a compact homogeneous three-manifold with an abelian fundamental group (see Lemma~\ref{lem:HomogeneousQuotients}), the preceding theorem and the classification of the Thurston geometries admitting compact quotients with abelian fundamental group \cite[Tables 1 and 2]{AFW} allow us to classify the compact homogeneous three-manifolds.

\begin{thm}[Classification of compact homogeneous three-manifolds]\label{thm:HomogeneousThreeManifolds}
A compact homogeneous three-manifold is isometric to one of the following spaces:

\begin{enumerate}
\item a flat three-dimensional torus;
\item $S^2\times S^1$ with a globally symmetric metric;
\item $\R P^2 \times S^1$ with a globally symmetric metric;
\item the non-trivial $S^{1}$-bundle over $\mathbb{R}P^{2}$ equipped with a locally symmetric metric;
\item a locally homogeneous elliptic three-manifold $(\Gamma \backslash S^3, h)$ where the isometry class $[\widetilde{h}]$ of the covering metric $\widetilde{h}$ on $S^3$ and the finite group $\Gamma \leq \Isom (S^3, \widetilde{h})^o \leq \operatorname{SO}(4)$ are as follows: 
\begin{enumerate}
\item $[\widetilde{h}]$ is the isometry class of any left-invariant metric on $S^3$ and $\Gamma$ is trivial or $\{\pm I\} \simeq \Z_2$;
\item $[\widetilde{h}]$ is the isometry class of any Berger metric on $S^3$ and, up to conjugation in $\Isom(S^3, h)$, $\Gamma = \Gamma_{q;1,-1}$ , for $q >2$;

\item $[\widetilde{h}]$ is the isometry class of any metric of constant curvature on $S^3$ and $\Gamma$ is a binary dihedral or binary polyhedral group.   
\end{enumerate}
\end{enumerate}
\end{thm}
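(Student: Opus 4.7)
My plan is to first pass to a finite Riemannian cover with abelian fundamental group, where the list of homogeneous examples is severely restricted by the AFW enumeration together with Theorem~\ref{thm:LocHomogEllipticThreeManifolds}, and then classify all further finite isometric quotients that remain homogeneous.

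Let $(M,g)$ be a compact homogeneous three-manifold. Singer's theorem implies its universal Riemannian cover $(\widetilde{M}, \widetilde{g})$ is a simply-connected homogeneous Riemannian three-manifold, hence one of the eight families (H1)--(H8). Applying Lemma~\ref{lem:HomogeneousQuotients}, I obtain a finite Riemannian covering $p \colon (M',g') \to (M,g)$ with $M'$ compact homogeneous and $\pi_1(M')$ abelian.

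Next, I would invoke Tables 1 and 2 of \cite{AFW}, which enumerate compact quotients of the eight Thurston geometries and their fundamental groups. These tables show that among (H1)--(H8), only those with universal covers isometric to $\mathbb{E}^3$, $S^2 \times \R$, or $S^3$ admit compact quotients with abelian $\pi_1$; the geometries (H3), (H5), (H6), (H7), (H8) are eliminated, and the left-invariant flat metrics on $\widetilde{E(2)}$ in (H1) coincide locally with $\mathbb{E}^3$ so that the resulting abelian quotients are flat tori. Hence $(M',g')$ is a flat three-torus, $S^2 \times S^1$ with a product symmetric metric, or an elliptic three-manifold $\Gamma \backslash S^3$ with $\Gamma$ a finite abelian Type I group $\Gamma_{q;1,p}$. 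Theorem~\ref{thm:LocHomogEllipticThreeManifolds}(1)--(3) then restricts the elliptic case further to: $(S^3, \widetilde{g})$ and $(\operatorname{SO}(3), \widetilde{g})$ with any left-invariant metric $\widetilde{g}$; $\Gamma_{q;1,-1} \backslash S^3$ ($q \geq 3$) equipped with a Berger metric; and $\Gamma_{q;1,1} \backslash S^3$ equipped with a constant-curvature metric.

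Finally, I recover $(M,g) = F \backslash (M',g')$ where $F \simeq \pi_1(M)/\pi_1(M')$ acts freely by isometries on $(M',g')$, and enumerate the resulting quotients that stay homogeneous. Among finite quotients of flat tori, only flat tori themselves remain homogeneous (Bieberbach manifolds with nontrivial holonomy are merely locally symmetric), yielding case (1). Free isometric finite quotients of $S^2 \times S^1$ produce $S^2 \times S^1$, $\mathbb{R}P^2 \times S^1$, and the non-trivial $S^1$-bundle over $\mathbb{R}P^2$, each remaining homogeneous with the induced symmetric metric, giving cases (2)--(4). In the elliptic case, the abelian cover examples directly realize (5a) and (5b), while Theorem~\ref{thm:LocHomogEllipticThreeManifolds}(5) identifies the additional non-abelian homogeneous quotients as exactly those whose fundamental group is binary dihedral or binary polyhedral and whose metric has constant sectional curvature, providing (5c).

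The main obstacle is to establish exhaustiveness in the final step. For the flat and $S^2 \times \R$ cases this follows from the short list of free finite isometric actions on the respective covers. For the elliptic case, completeness depends crucially on Theorem~\ref{thm:LocHomogEllipticThreeManifolds}(5), whose classification of locally homogeneous metrics on elliptic three-manifolds with binary dihedral or binary polyhedral fundamental group specifies precisely which such classes are in fact homogeneous.
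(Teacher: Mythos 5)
Your overall strategy---Singer plus Lemma~\ref{lem:HomogeneousQuotients} to produce a homogeneous finite cover with abelian fundamental group, the tables of \cite{AFW} to eliminate five of the eight geometries, and Theorem~\ref{thm:LocHomogEllipticThreeManifolds} to settle the elliptic case---is essentially the one used in the paper. The structural variation (descending back to $M$ from the abelian cover $M'$ rather than directly classifying the homogeneous compact quotients of each surviving geometry) is harmless in principle, and your treatment of the spherical and $S^2\times\R$ cases matches the intended argument. However, there is a genuine gap in the Euclidean case.

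The family (H1) consists of \emph{all} left-invariant metrics on $\widetilde{E(2)}$, not only the flat ones, and the non-flat members of this family do descend to compact locally homogeneous quotients with abelian fundamental group: identifying $\widetilde{E(2)}=\R^2\rtimes_{\phi}\R$ with its simply transitive action on $\R^3$ by planar translations and vertical screw motions, any full-rank lattice of genuine translations yields a torus carrying a non-flat locally homogeneous metric. Your sentence asserting that ``the left-invariant flat metrics on $\widetilde{E(2)}$ \ldots coincide locally with $\mathbb{E}^3$ so that the resulting abelian quotients are flat tori'' silently discards the non-flat metrics and simply declares $(M',g')$ to be a flat torus. What must actually be proved is that homogeneity of the compact quotient forces the covering metric to be flat; this is the substance of the Euclidean paragraph in the paper's proof. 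There, the Bieberbach theorems show the deck group contains a full-rank lattice of translations $\Lambda$, so $Z_{\Isom(\mathbb{E}^3)}(\Lambda)^o=\R^3$ and hence $Z_{\Isom(\R^3,\widetilde{g})^o}(\Gamma)^o$ consists of translations; Wolf's criterion (Theorem~\ref{thm:HomogeneousCriterion}) then forces this centralizer to act transitively, hence to equal $\R^3$, so $\widetilde{g}$ is translation-invariant and therefore flat. Without some version of this argument your case analysis is not exhaustive. A smaller omission of the same flavor: in the $S^2\times\R$ case you should also record that the fourth compact quotient (the connected-sum quotient $M_3$) admits only locally homogeneous, non-homogeneous metrics, so that your list of quotients ``remaining homogeneous'' is in fact complete.
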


\subsection{On hearing the local geometry of an elliptic three-manifold.}
Returning to the question of the audibility of the local geometry of a locally homogeneous elliptic three-manifold, we  note that there are currently no non-trivial examples of isospectral locally homogeneous elliptic three-manifolds. In fact, building off of Berger's observation that a closed elliptic three-manifold of constant sectional curvature is determined up to local isometry by its first three heat invariants \cite[Theorem 7.1]{Berger}, Wolf has shown that, among \emph{all} closed three-manifolds, a manifold of constant positive sectional curvature is uniquely determined by its spectrum \cite{Wolf01}. Additionally, several years ago, the second and third authors observed, in an unpublished preprint \cite{SchmidtSuttonUnpublished}, that the isometry classes of left-invariant metrics on $S^3$ (respectively, $\SO(3)$) can be mutually distinguished by their first four heat invariants (cf. \cite{Sutton} and \cite[Sec. 4]{Sutton2}). Recently, E. Lauret has made this same observation by explicitly calculating the fundamental tone of such metrics \cite{ELauret19}. 

By modifying the approach taken in \cite{SchmidtSuttonUnpublished} and appealing to Theorem~\ref{thm:LocHomogEllipticThreeManifolds}, we are able to show that on an elliptic three-manifold an isospectral set of locally homogeneous metrics consists of at most two isometry classes and these classes must be locally isometric.
 
\begin{thm}[Spectral isolation of locally homogeneous elliptic three-manifolds]\label{thm:IsospectralSets}
Any collection of locally homogeneous isometry classes on an elliptic three-manifold $\Gamma \backslash S^3$ for which the first four heat invariants agree consists of at most two classes and these classes are locally isometric. Therefore, in the event that $\Gamma$ is not conjugate to $\Gamma_{q;1,p}$ with $q \geq 3$, the isometry classes of locally homogeneous metrics on $\Gamma \backslash S^3$ can be mutually distinguished via their first four heat invariants. In particular, the isometry classes of left-invariant metrics on $S^3$ (respectively $\SO(3)$) can be mutually distinguished via their first four heat invariants.
\end{thm}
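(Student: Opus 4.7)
The plan is to reduce the problem, via Theorem~\ref{thm:LocHomogEllipticThreeManifolds}, to the claim that an isometry class of a left-invariant metric on $S^3$ is determined by its first four heat invariants. Recall that on any closed Riemannian manifold $(M,g)$ the $k$-th heat invariant is $a_k(M,g) = \int_M u_k\, dV_g$ for a universal scalar $u_k$ given by a polynomial in the curvature tensor and its covariant derivatives; explicitly $u_0 = 1$, $u_1 = \tfrac{1}{6}\Scal$, $u_2 = \tfrac{1}{360}(5\Scal^2 - 2|\Ric|^2 + 2|R|^2)$, and $u_3$ is a universal polynomial in curvature and its covariant derivatives. On a locally homogeneous manifold each $u_k$ is constant, so $a_k(M,g) = u_k\cdot \vol(M,g)$.

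Suppose $h_1$ and $h_2$ are locally homogeneous metrics on $\Gamma\backslash S^3$ with $a_k(h_1) = a_k(h_2)$ for $k = 0, 1, 2, 3$. By Theorem~\ref{thm:LocHomogEllipticThreeManifolds}, each lifts to a left-invariant metric $\widetilde{h}_i$ on $S^3$, and since $a_k(S^3, \widetilde{h}_i) = |\Gamma|\, a_k(\Gamma\backslash S^3, h_i)$, the lifted metrics also share their first four heat invariants. It therefore suffices to show that two left-invariant metrics on $S^3$ with matching $a_0, \ldots, a_3$ are isometric. Combined with the bound of at most two locally homogeneous isometry classes on $\Gamma \backslash S^3$ covering a fixed left-invariant metric from Theorem~\ref{thm:LocHomogEllipticThreeManifolds}---which is sharpened to one when $\Gamma$ is not conjugate to $\Gamma_{q;1,p}$ with $q \geq 3$---this yields every conclusion of the theorem, including the special cases of $S^3$ and $\SO(3)$.

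To establish the key claim I would parametrize left-invariant metrics on $S^3 \simeq \SU(2)$ by the ordered triple $0 < \lambda_1 \leq \lambda_2 \leq \lambda_3$ of eigenvalues of the associated symmetric operator relative to a fixed bi-invariant metric; by the explicit description of $\Isom(S^3, \widetilde{h})$ given in Theorem~\ref{thm:IsometryGroupofSimpleLieGroup2}, the isometry class of $\widetilde{h}$ is determined by the unordered triple $\{\lambda_1, \lambda_2, \lambda_3\}$. Using Milnor's formulas for the sectional curvatures of a left-invariant metric on a three-dimensional unimodular Lie group in a diagonalizing frame, one expresses $\Scal$, $|\Ric|^2$, $|R|^2$, and the cubic curvature polynomial appearing in $u_3$ as explicit symmetric rational functions of $(\lambda_1, \lambda_2, \lambda_3)$, while $\vol(S^3, \widetilde{h})$ is a constant multiple of $\sqrt{\lambda_1\lambda_2\lambda_3}$. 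Thus $a_0, \ldots, a_3$ reduce to four explicit symmetric expressions in the $\lambda_i$.

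The main obstacle is the remaining polynomial-algebra step: one must verify that these four symmetric expressions jointly separate unordered triples in $\R_{>0}^3/S_3$. Following the unpublished calculation in \cite{SchmidtSuttonUnpublished}, I expect that $a_0$ fixes $\lambda_1\lambda_2\lambda_3$, and that $a_1, a_2, a_3$ then supply three further algebraically independent symmetric functions, from which the elementary symmetric polynomials (equivalently, enough power sums) in the $\lambda_i$ can be recovered to pin down the multiset uniquely. Rigorously executing this reconstruction---through an explicit resultant or a careful specialization argument---is where I anticipate the bulk of the technical effort; once complete, $\widetilde{h}_1$ and $\widetilde{h}_2$ are isometric on $S^3$, so $h_1$ and $h_2$ are locally isometric, and Theorem~\ref{thm:LocHomogEllipticThreeManifolds} supplies the bound on the number of isospectral classes to complete the proof.
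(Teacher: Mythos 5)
Your overall strategy is the same as the paper's: reduce, via Theorem~\ref{thm:LocHomogEllipticThreeManifolds}, to showing that the first four heat invariants (together with $|\Gamma|$) determine the covering left-invariant metric on $S^3$ up to isometry, and then invert a system of symmetric functions. The reduction itself is sound: $a_k$ scales by $|\Gamma|$ under the covering because $a_k = u_k\cdot\vol$ on a locally homogeneous space, and Theorem~\ref{thm:LocHomogEllipticThreeManifolds} bounds by two the number of locally homogeneous isometry classes covered by a fixed left-invariant metric, with equality only when $\Gamma$ is conjugate to some $\Gamma_{q;1,p}$, $q\geq 3$. But the step you defer---verifying that the four symmetric expressions jointly separate unordered triples---is the entire mathematical content of the theorem, and it is not a routine resultant or algebraic-independence argument. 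Algebraic independence is a local statement and does not give injectivity of the induced map on $\pR^3/S_3$; a priori the resulting polynomial system has several real solutions, and one must determine which solution corresponds to an actual metric.

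The paper's execution (Theorem~\ref{thm:HeatInvariantsSymmetricPolysChristoffel} together with Lemmas~\ref{lem:SymmetricPolynomialsHeatInvariants} and \ref{lem:P1HeatInvariants}) works not with the metric eigenvalues but with the Christoffel symbols $\mu=(\mu_1,\mu_2,\mu_3)$ of a Milnor frame, in which the normalized invariants $a_k/a_0$ become polynomials in the elementary symmetric functions $P_1,P_2,P_3$ of $\mu$, and $a_0$ supplies the additional relation $P_1P_2-P_3 = 16\pi^2/(|\Gamma|\,a_0)$. One reads off $P_2$ from $a_1/a_0$, and $P_1$ then satisfies a quadratic $AP_1^2-BP_1-C=0$; the difficulty is that when the scalar curvature is positive and $C<0$ this quadratic can have two positive roots. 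Resolving the ambiguity requires (i) the second quadratic in $P_1$ supplied by $a_3$, (ii) the positivity of $P_1$ (Lemma~\ref{lem:P1Positive}), and (iii), in the degenerate case where the two quadratics are proportional, the nonnegativity of the discriminant of the monic cubic with coefficients $P_1,P_2,P_3$ (which must have three real roots), forcing $P_1^2\geq 27A/5>A$ and selecting the larger root via Lemma~\ref{calculus}. None of this case analysis appears in your proposal, so as written it is a correct reduction followed by a conjecture precisely where the proof is needed. If you wish to carry out your version in the eigenvalue variables, you would still have to confront the same multi-root phenomenon; the paper's change of variables to the Christoffel symbols is what renders the inversion tractable.
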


\noindent
Consequently, within the space of locally homogeneous metrics on an elliptic three-manifold, each metric possesses a neighborhood in which it is uniquely determined by its spectrum; that is, each metric is spectrally isolated. Coupled with \cite{DoyleRossetti}, \cite{Sharafutdinov} and \cite{LSS}, this presents strong evidence that one should \emph{not} expect to find isospectral deformations through locally homogeneous metrics in dimension three, which is in contrast with the many examples of non-trivial isospectral deformations through (locally) homogeneous metrics that exist in large dimension \cite{Gordon93, Gordon94, Schueth01, Proctor}. 

We conclude this section by observing that, among locally homogeneous three-manifolds, if there is an isospectral pair demonstrating that the local geometry of an elliptic three-manifold is not encoded in its spectrum, then it must take the following form. 
 
 \begin{thm}\label{thm:AudibillityLocalGeometry}
Let $(M_1, g_1)$ and $(M_2, g_2)$ be compact locally homogeneous three-manifolds that share the same first four heat invariants---i.e., $a_j(M_1, g_1) = a_j(M_2, g_2)$ for $j =0, 1,2, 3$---but are not locally isometric. If $M_1$ is an elliptic three-manifold, then 
\begin{enumerate}
\item $M_2$ is an elliptic three-manifold;
\item the fundamental groups of $M_1$ and $M_2$ are of different orders;
\item $(M_1, g_1)$ and $(M_2, g_2)$ both have non-degenerate Ricci tensors;
\item the metrics $g_1$ and $g_2$ are both sufficiently far from a metric of constant positive sectional curvature. 
\end{enumerate} 
\end{thm}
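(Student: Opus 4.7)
I would address the four conclusions in turn, relying on the main theorem of \cite{LSS}, \tref{thm:IsospectralSets}, and an explicit analysis of the first four heat invariants of a locally homogeneous elliptic three-manifold.

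For conclusion (1), I would appeal to the main theorem of \cite{LSS}, which asserts that being a compact three-manifold modeled on the $S^3$-geometry is audible among compact locally homogeneous three-manifolds. Since $M_1$ is elliptic, it is modeled on the $S^3$-geometry, and matching first four heat invariants forces $(M_2,g_2)$ to be modeled on the same geometry, so $M_2$ is elliptic.

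For conclusion (2), write $M_i = \Gamma_i\bs S^3$ with universal Riemannian cover $(S^3, \widetilde g_i)$ equipped with a left-invariant metric. Local homogeneity implies that the heat coefficients factor as
\[ a_j(M_i, g_i) \;=\; U_j(\widetilde g_i)\,\vol(M_i, g_i) \;=\; \frac{U_j(\widetilde g_i)\,\vol(S^3, \widetilde g_i)}{|\Gamma_i|} \;=\; \frac{a_j(S^3, \widetilde g_i)}{|\Gamma_i|}, \]
where $U_j(\widetilde g_i)$ is the pointwise curvature polynomial whose integral yields the $j$-th heat coefficient. Hence if $|\Gamma_1| = |\Gamma_2|$, the identities $a_j(M_1,g_1)=a_j(M_2,g_2)$ for $j = 0,1,2,3$ translate directly into $a_j(S^3, \widetilde g_1) = a_j(S^3, \widetilde g_2)$ for the same range. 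By the portion of \tref{thm:IsospectralSets} asserting that left-invariant metrics on $S^3$ are mutually distinguished by their first four heat invariants, $\widetilde g_1$ and $\widetilde g_2$ must be isometric, making $(M_1,g_1)$ and $(M_2,g_2)$ locally isometric, contrary to hypothesis.

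For conclusions (3) and (4), I would carry out an explicit analysis of the heat coefficients as symmetric polynomials in the eigenvalues $(\lambda_1^i, \lambda_2^i, \lambda_3^i)$ of $\widetilde g_i$ with respect to a bi-invariant reference metric on $S^3$, using Milnor's formulas to express the Ricci eigenvalues and the scalar curvature---and thereby each $U_j(\widetilde g_i)$---as polynomials in these principal parameters. Combined with an overall volume rescaling factor $|\Gamma_1|/|\Gamma_2| \neq 1$ coming from (2), the system of four equations $a_j(M_1,g_1)=a_j(M_2,g_2)$ becomes a polynomial system in six eigenvalues and one scaling factor. The goal is to show that, outside the loci where either (i) some eigenvalue of $\Ric(\widetilde g_i)$ vanishes, or (ii) $\widetilde g_i$ is close to a metric of constant positive sectional curvature, the system is rigid enough to force $\widetilde g_1$ and $\widetilde g_2$ to be isometric, making $(M_1,g_1)$ and $(M_2,g_2)$ locally isometric. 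Near the round metric the algebraic analysis would be supplemented by an inverse-function-theorem argument anchored on Berger's spectral characterization of constant sectional curvature metrics \cite[Theorem 7.1]{Berger}, which guarantees uniqueness of the heat-invariant-to-eigenvalue map on a neighborhood of the round configuration.

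The principal obstacle is the combinatorial and algebraic bookkeeping for (3) and (4): one must carry out an explicit elimination, via resultants or a similar polynomial-algebra tool, to confine the potential isospectral parameter pairs to the degenerate-$\Ric$ and nearly-round loci, and then quantify the "sufficiently far" neighborhood in (4) in terms of the heat-invariant expansion about the round metric.
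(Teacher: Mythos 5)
Your treatment of conclusions (1) and (2) is sound and follows essentially the paper's route: (1) is exactly the audibility of the $S^3$-geometry established in \cite{LSS}, and your reduction of (2) to the simply-connected case via $a_j(M_i,g_i) = a_j(S^3,\widetilde g_i)/|\Gamma_i|$ is a legitimate repackaging of Theorem~\ref{thm:FundamentalGroupSameSize} (the paper instead recovers the elementary symmetric functions of the Christoffel symbols from $|\Gamma|$ and $a_0,\dots,a_3$, which is the same mechanism one level down, and is what ultimately underlies the $S^3$ case of Theorem~\ref{thm:IsospectralSets} that you invoke).

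The genuine gap is in (3) and (4). What you offer there is a plan for a resultant/elimination computation in six metric eigenvalues plus a scaling factor, together with an inverse-function-theorem argument near the round metric, and you explicitly defer the ``combinatorial and algebraic bookkeeping.'' That computation is never carried out, and it is not needed: the paper's proof of (3) takes two short steps. First, by \cite[Theorem 1.1(2)]{LSS} the degeneracy of the Ricci tensor is itself audible among these spaces, so $\Ric_1$ and $\Ric_2$ are either both degenerate or both non-degenerate. Second, Lemma~\ref{lem:DegenerateRicci} shows that two locally homogeneous elliptic three-manifolds with degenerate Ricci tensor and equal first four heat invariants are forced to be locally isometric (one Christoffel symbol vanishes, and the remaining symmetric functions $P_1,P_2$ are then read off directly from $a_0,\dots,a_3$); since your manifolds are assumed not locally isometric, both Ricci tensors must be non-degenerate. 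For (4), the statement is imported wholesale from \cite[Theorem 1.1]{LSS}, which already establishes that a locally homogeneous elliptic three-manifold sufficiently close to a metric of constant positive curvature is determined up to local isometry by its first four heat invariants among locally homogeneous three-manifolds; Berger's theorem alone (determination of constant-curvature metrics by the first three heat invariants) does not give you the required open neighborhood of the round configuration, so the anchor you propose is the wrong reference and your inverse-function-theorem argument would still have to be supplied. In short, (1)--(2) are fine, but (3)--(4) as written are a research program rather than a proof, and the decisive inputs---the audibility of Ricci degeneracy and the degenerate-Ricci rigidity lemma---are absent from your outline.
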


\noindent
To the best of our knowledge, there are no known pairs of isospectral locally homogeneous elliptic three-manifolds with distinct Riemannian covers; in particular, classical incarnations of Sunada's method cannot produce such a pair as they would necessarily have a common Riemannian covering.

\subsection{Detecting the moments of inertia of a molecule via its rotational spectrum}
Spectral geometry enjoys rich connections with quantum mechanics and spectroscopy. We now demonstrate that when we consider the elliptic   three-manifold $\SO(3)$, Theorem~\ref{thm:IsospectralSets} has an interesting interpretation in terms of physical chemistry.

Consider a rigid three-dimensional body ${\bf W}$ with center of mass at the origin.  The \emph{moment of inertia tensor} of  ${\bf W}$ is a positive, self-adjoint linear isomorphism $\mathbb{I}: (\mathbb{R}^3,\Inner) \to (\mathbb{R}^3,\Inner)$ with respect to the Euclidean inner product $\Inner$.  The \emph{moment of inertia} of {\bf W} about an axis $\R \vecv$, where $\vecv \in S^2$, is the scalar  $\langle \mathbb{I} (\vecv) , \vecv \rangle$ and measures the resistance of ${\bf W}$ to rotation about the axis $\R \vecv$.

The moment of inertia tensor has an orthonormal eigenbasis $\{\vece_1, \vece_2, \vece_3\}$, with corresponding eigenvalues $0< I_1\leq I_2 \leq I_3$. 
The numbers $I_1, I_2,$ and $I_3$ are the \emph{principal moments of inertia} of the body and the vectors $\vece_1, \vece_2$ and $\vece_3$ are the \emph{principal axes}.
A body is \emph{spherical} when all principal moments of inertia are equal (e.g., the molecule methane), \emph{symmetric} when exactly two of the principal moments of inertia are equal (e.g., benzene and chloromethane),
and \emph{asymmetric} otherwise (e.g., water). 

The principal moments of inertia $0 < I_1 \leq I_2 \leq I_3$ determine a left-invariant metric $g_{(I_1, I_2, I_3)}$ on $\SO(3)$ as follows.  Let $B( \cdot, \cdot)$ denote the Killing form on the Lie algebra $\germ{so}(3)$ and let $\Theta_1, \Theta_2, \Theta_3$ denote the usual orthonormal basis of $\germ{so}(3)$ with respect to the inner product $-B$. The triple $0 < I_1 \leq I_2 \leq I_3$ determines a self-adjoint map $\mathbb{I}_{I_1, I_2, I_3}: (\germ{so}(3), -B) \to (\germ{so}(3), -B)$ defined by $\Theta_{j} \mapsto \frac{1}{I_j} \Theta_j$, for $j = 1, 2, 3$. Then, $g_{(I_1,I_2, I_3)}$ is the left-invariant metric on $\SO(3)$ induced by the inner product $ \langle u, v \rangle = -B(\mathbb{I}_{I_1, I_2, I_3} (u), v)$ on $\germ{so}(3)$. For example, the metric $g_{(1,1,1)}$ is the unique (up to scaling) bi-invariant metric on $\SO(3)$.  Letting $\mathcal{I} = \{ (I_1, I_2, I_3): 0 < I_1 \leq I_2 \leq I_3 \}$ and letting $\overline{\mathscr{R}}_{\rm{left}}(\SO(3))$ denote the space of isometry classes of left-invariant metrics on $\SO(3)$, Proposition~\ref{prop:MetricEigenvaluesSO(3)} implies that the map $\mathcal{I} \rightarrow \overline{\mathscr{R}}_{\rm{left}}(\SO(3))$ defined by $(I_1, I_2, I_3) \mapsto g_{(I_1,I_2, I_3)}$ is a bijection. 
 
Classical mechanics implies that the geodesics in $\SO(3)$ with respect to the left-invariant metric $g_{(I_1,I_2, I_3)}$ describe free rotations of  {\bf W} about its center of mass (cf. \cite[Section 28]{GuSt}).  When {\bf W} is a molecule, Schr\"{o}dinger's equation implies that the eigenvalues associated to the Laplacian of $g_{(I_1,I_2, I_3)}$ describe the rotational spectrum (or energy levels) of the molecule.  Applying Theorem~\ref{thm:IsospectralSets} to the class of left-invariant metrics on $\SO(3)$ yields the following statement:
\begin{center}
\emph{The rotational spectrum of a molecule determines its moments of inertia.}
\end{center}

\noindent
This was previously observed by the second and third authors in the unpublished articles \cite{Sutton} and \cite{ SchmidtSuttonUnpublished} (cf. \cite[Sec. 4]{Sutton2}).

\subsection{Recovering a metric from the first three heat invariants} Returning to geometry, in light of Theorem~\ref{thm:IsospectralSets}, it is natural to wonder whether it is possible to mutually distinguish the locally homogeneous metrics on an elliptic three-manifold with only the first three heat invariants. On a locally homogeneous space $(M,g)$ the first three heat invariants are given by $a_0(M,g) = V$, $a_1(M,g) = \frac{1}{6} V \cdot \Scal$ and $a_2(M,g) = \frac{1}{360} V (2 (|R|^2 - |\rho|^2) + 5\Scal^2)$, where $V$, $\Scal$, $R$ and $\rho$ denote the volume, scalar curvature, curvature tensor and Ricci tensor of $g$, respectively. Although the volume and the curvature tensor determine the heat invariants $a_0(M,g)$, $a_1(M,g)$ and $a_2(M,g)$, these heat invariants need not determine the isometry class of $g$ \cite{SchmidtSuttonUnpublished}. Nevertheless, $a_0(M,g)$, $a_1(M,g)$ and $a_2(M,g)$ suffice in some cases.

\begin{thm}\label{thm:A0A1A2SufficeScalarFlat}
Let $(\Gamma \backslash S^3, g)$ be a locally homogeneous elliptic three-manifold. 
Now, suppose one of the following holds:
\begin{enumerate}
\item $g$ has non-positive scalar curvature, or
\item $g$ has positive scalar curvature and 
$27a_1(\Gamma \backslash S^3, g)^2 - 30a_0(\Gamma \backslash S^3, g)a_2(\Gamma \backslash S^3, g) \geq 0$.
\end{enumerate}
Then, the first three heat invariants determine the isometry class of $g$ up to local isometry among all locally homogeneous metrics on $\Gamma \backslash S^3$. In particular, when $\Gamma$ is trivial or $\mathbb{Z}_2$, such metrics are uniquely determined by their first three heat invariants among all left-invariant metrics.
\end{thm}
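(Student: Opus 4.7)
The plan is to reduce to isometry classes of left-invariant metrics on $S^3$ and then perform an explicit symmetric-function analysis using Milnor's formulas for $\SU(2)$. By Theorem~\ref{thm:LocHomogEllipticThreeManifolds}, every locally homogeneous metric on $\Gamma \backslash S^3$ arises as a quotient of some left-invariant metric $\widetilde{g}$ on $S^3$, and the isometry class of $(S^3, \widetilde{g})$ is a complete local-isometry invariant of $(\Gamma \backslash S^3, g)$. The first three heat invariants of the quotient are $1/|\Gamma|$ times those of the cover, so two locally homogeneous metrics on $\Gamma \backslash S^3$ that agree in the first three heat invariants must have covers $\widetilde{g}_1, \widetilde{g}_2$ satisfying $\vol(S^3,\widetilde{g}_1) = \vol(S^3,\widetilde{g}_2)$, $\Scal(\widetilde{g}_1) = \Scal(\widetilde{g}_2)$, and equal values of $|R|^2 - |\rho|^2$. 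Thus it suffices to show that among left-invariant metrics on $S^3$ the triple $(\vol, \Scal, |R|^2 - |\rho|^2)$ determines the isometry class under the stated hypothesis; the final ``in particular'' statement is then immediate from Theorem~\ref{thm:LocHomogEllipticThreeManifolds}(1).

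Next, I would parametrize isometry classes of left-invariant metrics on $S^3 \cong \SU(2)$ by unordered triples of positive eigenvalues $\{\mu_1, \mu_2, \mu_3\}$ of the metric operator relative to (a fixed multiple of) the negative Killing form on $\germ{su}(2)$, using the $S^3$-analogue of Proposition~\ref{prop:MetricEigenvaluesSO(3)}. Milnor's formulas then yield
\begin{align*}
\vol &= c_0 \sqrt{\mu_1 \mu_2 \mu_3}, \\
\Scal &= F_1(\mu_1, \mu_2, \mu_3), \\
|R|^2 - |\rho|^2 &= F_2(\mu_1, \mu_2, \mu_3),
\end{align*}
for explicit symmetric rational functions $F_1, F_2$. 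Hence $a_0$ pins down $\sigma_3 := \mu_1 \mu_2 \mu_3$, and the question reduces to whether $(a_1, a_2)$ recovers the remaining elementary symmetric polynomials $\sigma_1 := \mu_1+\mu_2+\mu_3$ and $\sigma_2 := \mu_1\mu_2+\mu_1\mu_3+\mu_2\mu_3$; once $(\sigma_1, \sigma_2, \sigma_3)$ is known, $\{\mu_1, \mu_2, \mu_3\}$ is determined as the root set of $x^3 - \sigma_1 x^2 + \sigma_2 x - \sigma_3$.

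I would then reduce the recovery of $(\sigma_1, \sigma_2)$ to a single quadratic equation. After rewriting $F_1, F_2$ in terms of $\sigma_1, \sigma_2, \sigma_3$, I expect to use the relation coming from $a_1$ to express $\sigma_2$ in terms of $\sigma_1$ (with coefficients involving $\sigma_3$ and $a_0, a_1$), then substitute into the relation from $a_2$ to obtain a quadratic in $\sigma_1$ whose discriminant is, up to a positive universal multiple, exactly $27 a_1^2 - 30 a_0 a_2$. In the non-positive scalar curvature case, $a_1 \leq 0$ together with the positivity constraints $\sigma_1, \sigma_2, \sigma_3 > 0$ (and Newton's inequality $\sigma_1^2 \geq 3\sigma_2$ needed for realness of the $\mu_i$) should rule out one of the two candidate roots automatically. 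In the positive scalar curvature case, the hypothesis $27 a_1^2 - 30 a_0 a_2 \geq 0$ guarantees that the quadratic has a real root, and the same positivity and realness constraints should single out a unique root.

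The main obstacle will be the concrete algebraic step of computing $F_1, F_2$ in the $\sigma$-variables and verifying that the discriminant of the resulting quadratic is exactly $27 a_1^2 - 30 a_0 a_2$ up to a positive factor independent of the metric; this is what makes the sign hypothesis sharp and what forces the particular coefficients $27$ and $30$ into the statement. A secondary difficulty is the case analysis ruling out the spurious root via positivity and Newton's inequality, especially near the boundary where the discriminant vanishes and the two candidate values of $\sigma_1$ coincide.
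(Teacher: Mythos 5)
Your overall strategy is the one the paper uses: reduce to left-invariant metrics on $S^3$, convert the first three heat invariants into symmetric functions of a triple of metric invariants, and show that under the sign hypothesis a certain quadratic has a unique admissible root. The paper runs this in the Christoffel symbols $\mu=(\mu_1,\mu_2,\mu_3)$ of a Milnor frame rather than in the metric eigenvalues, but the two are linearly related (one finds $P_1(\mu)=\tfrac12 e_1(\lambda)$ with $\lambda_i=\eta_i^2/(\eta_1\eta_2\eta_3)$), so your computation, if carried out, would produce exactly the quadratic $AP_1^2-BP_1-C=0$ of Lemma~\ref{lem:SymmetricPolynomialsHeatInvariants} up to this change of variable, with $A=P_2(\mu)=\Scal/2$ determined by $a_1/a_0$, $B=16\pi^2/(a_0|\Gamma|)$ determined by the volume, and $C=\frac{27a_1^2-30a_0a_2}{4a_0^2}$. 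The theorem itself is then the two-line deduction from Theorem~\ref{thm:FundamentalGroupSameSize} and Lemma~\ref{lem:P1HeatInvariants} that the paper gives.

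The genuine error is your identification of the role of $27a_1^2-30a_0a_2$. You predict it appears as the \emph{discriminant} of the quadratic and that the hypothesis ``guarantees that the quadratic has a real root.'' In fact one computes $27a_1^2-30a_0a_2=4a_0^2\,P_1(\mu)P_3(\mu)$, so this quantity is (up to the positive factor $4a_0^2$) the \emph{negative of the constant term} $-C$ of the quadratic $AP_1^2-BP_1-C$; the actual discriminant is $B^2+4AC$, which involves the volume through $B$ and is automatically nonnegative because the true value $P_1(\mu)$ is a real root. (A scaling check confirms they cannot be proportional: under $g\mapsto tg$ the discriminant scales like $t^{-3}$ while $27a_1^2-30a_0a_2$ scales like $t$.) The correct mechanism is a sign analysis of the \emph{roots}, not of the discriminant: when $A>0$ and $C\ge 0$ the product of the two roots is $-C/A\le 0$, so at most one root is positive, and since $P_1(\mu)>0$ (Lemma~\ref{lem:P1Positive}, via Equation~\eqref{eq:ChristoffelToMetricEigenvalues}) it is the unique positive root, hence determined; when $\Scal<0$ the sum of roots $B/A$ is negative so again at most one root is positive; when $\Scal=0$ the equation is linear. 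No appeal to Newton's inequality is needed. This is a fixable misprediction rather than a wrong approach, but as written the step ``the hypothesis guarantees a real root'' would not close the argument, since realness is free and the issue is uniqueness among positive roots.
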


\noindent
In particular, we remark that  a locally homogeneous elliptic three-manifold with degenerate Ricci tensor is determined up to local isometry by its first three heat invariants. Indeed, the equality $27a_1(M,g)^2 - 30a_0(M,g)a_2(M,g) = 0$ is equivalent to the metric possessing a degenerate Ricci tensor (see Corollary~\ref{cor:DegenerateRicci}) and all locally homogeneous elliptic three-manifolds with degenerate Ricci tensor must have positive scalar curvature (Lemma~\ref{lem:ScalVolDegenerateRicci}) and we apply the theorem.
 
\subsection{Recovering a metric from its curvature tensor} Two Riemannian homogeneous manifolds $(M,g)$ and $(\widehat{M}, \hat{g})$ are said to have \emph{identical}\label{IdenticalCurvature} curvature tensors $R_1$ and $R_2$ if for each $p \in M$ and $\hat{p} \in \widehat{M}$ there is a linear isometry $F: (T_pM, g_p) \to (T_{\hat{p}}\widehat{M}, \hat{g}_{\hat{p}})$ such that $F^*\widehat{R}_{\hat{p}} = R_{p}$. In dimension $2$ it is clear that homogeneous manifolds with identical curvature tensor are locally isometric. In contrast, continuous families of locally non-isometric left-invariant metrics on $\SU(2)$ (resp. $\SO(3)$ and $\SL(2, \R)$) with identical curvature tensor are exhibited in \cite{Lastaria, ScWo1, ScWo2}. The machinery used to prove Theorem~\ref{thm:AudibillityLocalGeometry} allows us to demonstrate that for $\SU(2)$, $\SO(3)$ and any other elliptic three-manifold these ambiguities can be resolved (up to local isometry) by considering volume.

\begin{thm}\label{thm:VolCurvature} 
Locally homogeneous metrics on an elliptic three-manifold $\Gamma \backslash S^3$ with identical curvature tensor and volume are locally isometric. In the event that $\Gamma$ is not conjugate to $\Gamma_{q;1,p}$, for $q \geq 3$, we may replace ``locally isometric'' with ``isometric'' in the preceding sentence.
\end{thm}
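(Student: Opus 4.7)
The plan is to reduce the claim to a rigidity statement for left-invariant metrics on $S^3$ and then descend to $\Gamma\backslash S^3$ via \tref{thm:LocHomogEllipticThreeManifolds}. Suppose $g_1, g_2$ are locally homogeneous metrics on $M = \Gamma\backslash S^3$ with identical curvature tensors and equal volume. Since a complete locally homogeneous metric lifts to a homogeneous metric on its universal Riemannian cover (Singer \cite{Singer}), the lifts $\widetilde{g}_i$ on $S^3$ are left-invariant, have identical curvature tensors at every point, and satisfy $\vol(S^3,\widetilde{g}_i) = |\Gamma|\cdot\vol(M, g_i)$, so the covers also share a common volume. Thus it suffices to prove: \emph{two left-invariant metrics on $S^3$ with identical curvature tensors and equal volume are isometric.} Once this is established, $g_1$ and $g_2$ are automatically locally isometric; and when $\Gamma$ is not conjugate to $\Gamma_{q;1,p}$ with $q \geq 3$, \tref{thm:LocHomogEllipticThreeManifolds} asserts that each isometry class of left-invariant metric on $S^3$ covers a unique isometry class on $\Gamma\backslash S^3$, upgrading local isometry to isometry.

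For the $S^3$-rigidity step, I would parametrize left-invariant metrics by the unordered triple $(A,B,C)$ of positive eigenvalues of the associated inner product on $\mathfrak{su}(2)$ against (minus) the Killing form; by \pref{prop:MetricEigenvaluesSO(3)} this parametrization is a bijection onto isometry classes. Since the Weyl tensor vanishes in dimension three, the curvature tensor is determined by the Ricci tensor, and ``identical curvature tensor'' is equivalent to equality of the unordered triples of Ricci eigenvalues $\{\rho_i(A,B,C)\}$, which are explicit rational functions of $(A,B,C)$ computed in a Milnor-type diagonalizing frame. Equal volume translates into $ABC = A'B'C'$, since the volume is a fixed universal constant multiple of $\sqrt{ABC}$. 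The goal is then to show that these four symmetric-function relations force $\{A,B,C\} = \{A',B',C'\}$.

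The main obstacle is precisely the existence of the nontrivial one-parameter families of non-isometric left-invariant metrics on $\SU(2)$ with identical curvature tensor exhibited by Lastaria and by Schmidt-Wolfson \cite{Lastaria, ScWo1, ScWo2}: the three Ricci equations alone do not pin down $(A,B,C)$. I would parametrize each such family by a single real variable $t$, use two of the Ricci relations to eliminate two coordinates, and verify by explicit computation that the resulting expression for the volume $\sqrt{ABC}(t)$ is strictly monotone in $t$. Strict monotonicity would ensure that matching both the curvature tensor and the volume forces two metrics to have the same triple $(A,B,C)$ up to permutation---i.e., to be isometric---completing the rigidity argument and hence the theorem.
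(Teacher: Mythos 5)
Your reduction to the statement ``two left-invariant metrics on $S^3$ with identical curvature tensor and equal volume are isometric,'' and the use of Theorem~\ref{thm:LocHomogEllipticThreeManifolds} to upgrade local isometry to isometry when $\Gamma$ is not conjugate to some $\Gamma_{q;1,p}$, both match the paper. The gap is that the decisive step is deferred to an unexecuted computation, and the one concrete claim you make about it is false as literally stated. Along the continuous family of metrics with a fixed degenerate curvature tensor---in the Milnor/Christoffel coordinates $\mu=(\mu_1,\mu_2,\mu_3)$ of Lemma~\ref{lem:HeatInvariantsSymmetricPolys} this is the hyperbola $\{(0,t,c/t): t>0\}$ with $c=\mu_2\mu_3$ fixed---the volume of the covering metric is $16\pi^2/\bigl(c(t+c/t)\bigr)$ by Equation~\eqref{eq:VolChristoffel}, which is \emph{not} strictly monotone in $t$: it increases on $(0,\sqrt{c})$ and decreases on $(\sqrt{c},\infty)$. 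It is injective only on isometry classes, because $t$ and $c/t$ yield the same multiset. So ``parametrize by $t$ and check monotonicity'' must be replaced by an argument in terms of the unordered data; you also have not shown that the solution set of the curvature equations is exactly a one-parameter family (rather than something larger), nor dealt with the permutation ambiguity inherent in matching unordered triples of Ricci eigenvalues.

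For comparison, the paper's route (Theorem~\ref{thm:Isocurved}) splits on degeneracy of the Ricci tensor, which is an isocurvature invariant. If the Ricci tensor is non-degenerate, all $\mu_i$ are nonzero, the three products $\mu_i\mu_j$ are recovered from the principal curvatures via Equation~\eqref{eqn:PrincipalCurvaturesChristoffelSymbols}, hence each $\mu_i^2=(\mu_i\mu_j)(\mu_i\mu_k)/(\mu_j\mu_k)$ is determined, and the sign ambiguity is removed by the positivity of the pairwise sums $\mu_i+\mu_j$ (positivity of the metric eigenvalues); thus the curvature tensor alone determines the multiset $[\mu_1,\mu_2,\mu_3]$ and volume is not even needed. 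The continuous families of Lastaria and Schmidt--Wolfson live entirely in the degenerate case, where the curvature data collapses to the single number $P_2(\mu)=\mu_2\mu_3$; there the volume supplies $P_1(\mu)=\mu_2+\mu_3$ via Equation~\eqref{eq:VolumeAndSymmetricPolynomials}, the two elementary symmetric functions determine the multiset (Lemma~\ref{lem:SymmetricPolynomialsUsefulFact}), and Proposition~\ref{prop:DeterminedByChristoffelSymbols} gives local isometry. Your plan can be repaired along exactly these lines, but as written it does not yet constitute a proof.
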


\begin{Structure}
The purpose of Section \ref{section:homogeneous} is to classify all locally homogeneous elliptic three-manifolds up to isometry. We begin with a result by Seifert and Threlfall that classifies the groups that can appear as the fundamental group of an elliptic three-manifold up to conjugation in $O(4)$. Then, after an explicit computation of the isometry group of a left-invariant metric on a compact simple Lie group (Theorem~\ref{thm:IsometryGroupofSimpleLieGroup2}), the metric classification follows by investigating all possible ways of realizing these fundamental groups as subgroups of isometry groups of left-invariant metrics on $S^3$. Section \ref{sec:CptHomogeneous} is dedicated to the classification of all compact \textit{homogeneous} three-manifolds. We later deduce that the first four heat invariants cannot distinguish homogeneous three-manifolds among locally homogeneous ones. In Section \ref{sec:SphericalHeatInvariants}, we express the first four heat invariants in terms of the Christofffel symbols with respect to Milnor frame \cite[Definition 2.13]{LSS}, and apply the formulae to prove the main theorems. Building off the method developed in Section \ref{sec:SphericalHeatInvariants}, we demonstrate that the local isometry type of a locally homogeneous elliptic three-manifold is determined by its curvature tensor and volume in Section \ref{sec:ExtraStuff}. 
\end{Structure}


\section{Locally homogeneous three-manifolds with finite fundamental group}\label{section:homogeneous}
Singer's observation that the universal Riemannian cover of a complete locally homogeneous manifold must be homogeneous \cite[p. 692]{Singer} coupled with the classification of simply-connected homogeneous spaces admitting compact quotients (see p.~\pageref{thm:SimplyConnectedLocallyHomogeneous}) tells us that the universal Riemannian cover of any closed locally homogeneous three-manifold with finite fundamental group is isometric to the three-sphere equipped with a left-invariant metric; that is, it is modeled on the $S^3$-geometry. And, Onishchik's work on the isometry groups of homogeneous spaces (Theorem~\ref{thm:IsometryGroupofSimpleLieGroup}) implies the isometry group of any left-invariant metric on $S^3$ is a subgroup of $\operatorname{O}(4)$. It follows that the closed three-manifolds with finite fundamental groups \emph{and} admitting geometric structures are precisely the elliptic three-manifolds (i.e., quotients of $S^3$ by finite subgroups of $\SO(4)$ acting freely): a geometric version of the elliptization theorem. The goal of this section is to verify Theorem~\ref{thm:LocHomogEllipticThreeManifolds} which provides a classification of the locally homogeneous metrics supported by the elliptic three-manifolds; i.e., the compact  locally homogeneous three-manifolds with finite fundamental group. This classification will contribute to our categorization of compact homogeneous three-manifolds given in Theorem~\ref{thm:HomogeneousThreeManifolds}. Our general strategy is as follows.

After establishing some notation in Section~\ref{sec:Notation}, we use Section~\ref{sec:TopologicalClassification} to review Seifert and Threlfall's classification of elliptic three-manifolds (Theorem~\ref{SeifertThrelfalltheorem}): the finite subgroups of $\SO(4)$ acting freely on $S^3$ come in six explicit families (Type I-VI) and the resulting quotient manifolds are diffeomorphic if and only if the subgroups are conjugate in $\operatorname{O}(4)$. By the work of Sekigawa \cite[Theorem B]{Sekigawa} (see p.~\pageref{thm:SimplyConnectedLocallyHomogeneous}), the universal cover of a locally homogeneous elliptic three-manifold is isometric to $S^3$ equipped with a left-invariant metric and it follows from a result of Onischik (Theorem~\ref{thm:IsometryGroupofSimpleLieGroup}) that the isometry group of a left-invariant metric on $S^3$ is a subgroup of $\operatorname{O}(4)$. In Section~\ref{sec:IsometryClassesOfLeftInvariantMetrics}, we observe that the left-invariant metrics on $S^3$ are determined up to isometry by their so-called metric eigenvalues (see p.~\pageref{page:MetricEigenvalues}). In fact, given a left-invariant metric $g$ on $S^3$ and $h \in S^3$, the metric $C_{h}^{*}g$---where $C_h$ is conjugation by $h$---is a left-invariant metric on $S^3$ with identical metric eigenvalues and all left-invariant metrics on $S^3$ with metric eigenvalues identical to those of $g$ arise in this manner. Therefore, up to isometry, every locally homogeneous elliptic three-manifold is of the form $(\Gamma \backslash S^3, h_\Gamma)$, where $h \in \mathscr{R}_{\rm{left}}(S^3)$, $\Gamma \leq \Isom(S^3, h)$ is of Type I-VI, and $h_\Gamma$ is the corresponding quotient metric on $\Gamma \backslash S^3$, and we see that our task is now to determine when two such quotients are isometric.

With this goal in mind, in Section~\ref{sec:IsometryGroups}, we determine which subgroups of Type I-VI in $\SO(4)$ can occur as subgroups of the isometry group of a given left-invariant metric (Corollary~\ref{cor:DiscreteSubgroupsIsometryGroupsS^3}). This is achieved by providing an effective description of the isometry group of a left-invariant metric on a compact simple Lie group (Theorem~\ref{thm:IsometryGroupSimple}) and applying it to compute the isometry groups of left-invariant metrics on $S^3$ (Proposition~\ref{prop:IsometryGroupS^3}). Now, suppose $\Gamma_1 \backslash S^3$ and $\Gamma_2 \backslash S^3$ are two diffeomorphic elliptic three-manifolds. What remains is to determine when left-invariant metrics $h_1$ and $h_2$ on $S^3$, with the property  that $\Gamma_1 \leq \Isom(S^3, h_1)$ and $\Gamma_2 \leq \Isom(S^3, h_2)$, induce isometric quotient metrics.  We solve this problem in Section~\ref{sec:LocHomogElliptic} via Theorem~\ref{thm:LocHomogEllipticThreeManifoldsV2}---a reformulation of Theorem~\ref{thm:LocHomogEllipticThreeManifolds}---which provides an explicit description of the isometry classes of the locally homogeneous elliptic three-manifolds and---through Wolf's criteria (Theorme~\ref{thm:HomogeneousCriterion})---identifies those that are actually homogeneous.

\subsection{Some notation and a lemma}\label{sec:Notation} Let $M$ be a connected smooth manifold, $\operatorname{Diff}(M)$ its group of diffeomorphisms, and $\mathscr{R}(M)$ the space of Riemannian metrics on $M$.  The group $\operatorname{Diff}(M)$ acts on $\mathscr{R}(M)$ by pullback.  The orbit space, denoted by $\overline{\mathscr{R}}(M) := \mathscr{R}(M)/\operatorname{Diff}(M),$ is the space of isometry classes of Riemannian metrics on $M$ and, for each Riemannian metric $g \in \mathscr{R}(M)$, we let $[g] \in \overline{\mathscr{R}}(M)$ denote its isometry class.

Given a Lie group $G \leq \operatorname{Diff}(M)$ we let $\mathscr{R}_G(M)$ be the set of $G$-invariant metrics on $M$ and $\overline{\mathscr{R}}_G(M) = \{[g]\,\vert\,g\in \mathscr{R}_G(M)\}$ the isometry classes of $G$-invariant metrics on $M$. Given a discrete subgroup $\Gamma \leq \operatorname{Diff}(M)$ acting freely and properly discontinuously on $M$ we let $\mathscr{R}^{\Gamma}_G(M)$ be the set of $\Gamma$-invariant metrics in $\mathscr{R}_G(M)$. Then, $h \in \mathscr{R}^{\Gamma}_G(M)$ induces a Riemannian metric $h_\Gamma \in \mathscr{R}(\Gamma \backslash M)$. In this case we adopt the following notation: 
\begin{itemize}
\item $\mathscr{R}^{\Gamma}_{G}(\Gamma \backslash M) = \{ h_\Gamma : h \in \mathscr{R}^{\Gamma}_{G}(M) \}$,
\item $\overline{\mathscr{R}}^{\Gamma}_{G}(\Gamma \backslash M) = \{ [h_\Gamma] : h \in \mathscr{R}^{\Gamma}_{G}(M) \}$,
\item $\overline{\mathscr{R}}_{G}(\Gamma \backslash M)$ denotes the isometry classes of metrics on $\Gamma \backslash M$ \emph{covered by} a $G$-invariant metric on $M$.
\end{itemize}

\noindent 
Given $[h] \in \overline{\mathscr{R}}_{G}(\Gamma \backslash M)$ there is a $\Gamma' \leq \Diff(M)$, $g \in \mathscr{R}^{\Gamma'}_{G}(M)$ and a diffeomorphism $\phi = \phi_{\Gamma'}: \Gamma \backslash M \to \Gamma' \backslash M$ such that $\phi^{*}_{\Gamma'} g_{\Gamma'} \in [h]$. Therefore, letting $\mathcal{A}$ denote the set of discrete subgroups $\Gamma' \leq \Diff(M)$ acting properly discontinuously on $M$ with a diffeomorphism $\phi_{\Gamma'} : \Gamma \backslash M \to \Gamma' \backslash M$ and $\mathscr{R}^{\Gamma'}_{G}(M) \neq \emptyset$, we see
$$\overline{\mathscr{R}}_{G}(\Gamma \backslash M) = \cup_{\Gamma' \in \mathcal{A}} \phi^{*}_{\Gamma'} \overline{\mathscr{R}}_{G}^{\Gamma'}(\Gamma' \backslash M).$$

Let $K$ be a subgroup of a group $H$. Given any element $h\in H$, we denote the conjugate of $K$ by $h$ as
$$K^{h}=hKh^{-1}.$$
It will be useful to recall the following lemma.

\begin{lem}\label{lem:Basic}
Let $f \in \operatorname{Diff}(M)$ and $G_2=G_1^{f}$. Suppose that $\Gamma_1$ is a discrete subgroup of $\operatorname{Diff}(M)$ acting freely and properly discontinuously on $M$, and $\Gamma_2 = \Gamma_1^{f}$. Then, for any $g \in \mathscr{R}^{\Gamma_2}_{G_2}(M)$, we have $f^*g \in \mathscr{R}^{\Gamma_1}_{G_1}(M)$ and $\Isom(M, f^*g) =\Isom(M, g)^{f^{-1}}$.
Moreover, $f$ induces an isometry 
$$\overline{f}: (\Gamma_1 \backslash M, (f^*g)_{\Gamma_1}) \to (\Gamma_2 \backslash M, g_{\Gamma_2})$$
 and we have a bijection $h \in \mathscr{R}^{\Gamma_2}_{G_{2}}(\Gamma_2 \backslash M) \mapsto \overline{f}^*h \in \mathscr{R}^{\Gamma_1}_{G_1}(\Gamma_1 \backslash M)$.
\end{lem}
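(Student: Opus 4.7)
The plan is to verify the four assertions of the lemma in sequence; all of them reduce to bookkeeping with pullbacks together with the conjugation identities $\Gamma_2 = f\Gamma_1 f^{-1}$ and $G_2 = fG_1 f^{-1}$. First, to see that $f^{*}g \in \mathscr{R}^{\Gamma_1}_{G_1}(M)$, I would fix $\gamma_1 \in \Gamma_1$, set $\gamma_2 := f\gamma_1 f^{-1} \in \Gamma_2$, and compute
$$\gamma_1^{*}(f^{*}g) = (f\gamma_1)^{*}g = (\gamma_2 f)^{*}g = f^{*}(\gamma_2^{*}g) = f^{*}g,$$
where the last equality uses $\Gamma_2$-invariance of $g$. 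The identical calculation, applied to $\sigma \in G_1$ with $\sigma' := f\sigma f^{-1} \in G_2$, yields the corresponding $G_1$-invariance.

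For the isometry group identity, the same manipulation gives a chain of equivalences: $\phi \in \Isom(M, f^{*}g)$ if and only if $\phi^{*}f^{*}g = f^{*}g$, if and only if $(f\phi f^{-1})^{*}g = g$, if and only if $f\phi f^{-1} \in \Isom(M,g)$, if and only if $\phi \in f^{-1}\Isom(M,g)f = \Isom(M,g)^{f^{-1}}$.

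Next I would construct $\overline{f}$. Because $f$ intertwines the two actions---$f(\Gamma_1 p) = (f\Gamma_1 f^{-1})f(p) = \Gamma_2 f(p)$---it descends to a diffeomorphism $\overline{f}:\Gamma_1\backslash M \to \Gamma_2\backslash M$ satisfying $\pi_2\circ f = \overline{f}\circ \pi_1$, where $\pi_i:M\to \Gamma_i\backslash M$ is the quotient projection. To see that $\overline{f}$ is an isometry, observe that $\pi_2$ is a local isometry from $(M,g)$ onto $(\Gamma_2\backslash M, g_{\Gamma_2})$, so
$$\pi_1^{*}\,\overline{f}^{*}g_{\Gamma_2} = f^{*}\pi_2^{*}g_{\Gamma_2} = f^{*}g = \pi_1^{*}(f^{*}g)_{\Gamma_1};$$
since $\pi_1$ is a surjective local diffeomorphism, this forces $\overline{f}^{*}g_{\Gamma_2} = (f^{*}g)_{\Gamma_1}$.

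Finally, the asserted bijection $h \mapsto \overline{f}^{*}h$ is a formal consequence: its inverse is pullback by $\overline{f}^{-1}$, which is induced by $f^{-1}$ after swapping the roles of $\Gamma_1$ and $\Gamma_2$. Applying the first two parts of the lemma to an arbitrary lift $g \in \mathscr{R}^{\Gamma_2}_{G_2}(M)$ of $h \in \mathscr{R}^{\Gamma_2}_{G_2}(\Gamma_2\backslash M)$ shows that $\overline{f}^{*}h = (f^{*}g)_{\Gamma_1}$ indeed lies in $\mathscr{R}^{\Gamma_1}_{G_1}(\Gamma_1\backslash M)$, so $\overline{f}^{*}$ does map these sets into each other. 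There is no serious obstacle in this argument; the only point requiring care is the consistent use of the relation $\pi_2\circ f = \overline{f}\circ \pi_1$ together with the local-isometry property of $\pi_1$ and $\pi_2$ when transferring $\Gamma$- and $G$-invariance between $M$ and its two quotients.
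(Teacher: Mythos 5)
Your proof is correct, and the paper itself states this lemma without proof (it is merely ``recalled'' as standard); your verification via the conjugation identities $f\gamma_1 = \gamma_2 f$, the pullback functoriality $(f\circ\phi)^{*} = \phi^{*}\circ f^{*}$, and the relation $\pi_2\circ f = \overline{f}\circ\pi_1$ together with injectivity of $\pi_1^{*}$ is exactly the routine argument the authors implicitly rely on. No gaps.
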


\subsection{Classification of elliptic three-manifolds}\label{sec:TopologicalClassification} 
We turn to the classification of elliptic three-manifolds. In order to keep the present article self-contained and to establish notation, we now review Seifert and Threlfall's classification of the spherical space forms which, in light of the Elliptization Theorem, provides a classification of all closed three-manifolds with a finite fundamental group. Our exposition will draw on \cite[Sec. 2.6]{Wolf},  \cite[Sec. 3]{Ikeda80} and \cite[Chp. 6]{Orlick}.

Let $Q = \operatorname{span}_{\R}\{1 , i , j , k\}$ denote the quaternion algebra defined by $i^2=j^2=k^2=ijk=-1$. Given $$q=a+bi+cj+dk \in Q,$$ let $\bar{q}=a-bi-cj-dk$ denote its conjugate.  Define an inner-product and norm on $Q$ by
$q_1 \cdot q_2 = \operatorname{Re}(\bar{q}_1 q_2)$ and $|q_1|^2 = q_1 \bar{q}_1$ for any $q_1, q_2 \in Q$. Let $S^3 = \{ q \in Q : |q|^2 = 1 \}$ be the unit sphere.
Identifying $Q$ with $\R^4$, we let 
$L_q, R_q: Q \equiv \R^4 \to Q \equiv \R^4$
denote left and right multiplication by $q \in Q$, respectively,
and we obtain a double covering of Lie groups 
$\rho: S^3 \times S^3 \to \SO(4)$ given by 
$(q_1, q_2) \mapsto L_{q_1} \circ R_{q_2^{-1}} = L_{q_1} \circ R_{\overline{q}_2}$.
The subgroups 
\begin{itemize}
\item $L \equiv \rho (S^3 \times 1)$
\item $R \equiv \rho (1 \times S^3)$
\end{itemize}
are normal in $\SO(4)$ and for any $A \in \Orthog (4) - \SO(4)$ we have $L^A = R$. The $L$-invariant (resp., $R$-invariant ) metrics on $S^3$ are the left-invariant (resp. right-invariant) metrics on $S^3$.
One can check that $\rho(e^{i\theta}, e^{i \phi}) = R(\theta - \phi) \oplus R(\theta + \phi)$, where 
$$R(\alpha) = \left(\begin{array}{cc}\cos(\alpha) & -\sin (\alpha) \\ \sin(\alpha) & \cos(\alpha) \end{array}\right)$$ is counterclockwise rotation through the angle $\alpha$.
By the \emph{standard circle} in $S^3$ we mean 
$$S^{1}_{\rm{std}} \equiv \{e^{i\theta} : \theta \in \R \} \leq S^3.$$
Then, the \emph{standard maximal torus} $S^{1}_{\rm{std}} \times S^{1}_{\rm{std}}$  in $S^3 \times S^3$ is mapped to the standard maximal torus $T^{2}_{\rm{max}} \equiv \{ R(\alpha) \oplus R(\beta) : \alpha, \beta \in \R \}$ in $\SO(4)$.
Similarly, identifying $\R^3$ with the space $\operatorname{span}_{\R}\{ i , j , k\}$ 
of pure quaternions we obtain the two-fold covering of Lie groups $\pi: S^3 \to \SO(3)$ 
given by $\pi(q)(x) = qxq^{-1}$; i.e., $\pi = \rho \upharpoonright \Delta S^3$, where $\Delta S^3$ is the diagonal embedding of $S^3$ in $S^3 \times S^3$.

It is well known that up to conjugation the finite subgroups of $\SO(3)$ are the cyclic group $\Z_q$ of order $q$, the dihedral group ${\bf D}_k$ of order $2k$ ($k \geq 2$), the tetrahedral group ${\bf T}$ of order 12, the octahedral group ${\bf O}$ of order $24$, or the icosahedral group ${\bf I}$ of order $60$ (see \cite[Thm. 2.6.5]{Wolf}). In terms of generators and relations these groups are given by 
\begin{itemize}
\item $\Z_q = \langle A: A^q = 1 \rangle$
\item ${\bf D}_k = \langle A, B : A^k = B^2 = 1, BAB^{-1} = A^{-1} \rangle$
\item ${\bf T } =  \langle A, P, Z : A^3 = P^2 = Z^2 = 1, PZ =ZP, APA^{-1} = Z, AZA^{-1} = PZ \rangle$
\item ${\bf O } =  \langle A, P, Z, R: A^3 = P^2 = Z^2 = R^2 = 1, PZ =ZP, 
RAR^{-1} = A^{-1}, APA^{-1} = Z, AZA^{-1} = PZ, PZP^{-1} = ZP, RZR^{-1} = Z^{-1}\rangle$ 
\item ${\bf I } =  \langle A, B,C : A^3= B^2=C^5 = ABC = 1 \rangle$
\end{itemize}
and it is known that ${\bf T}$ is isomorphic to $A_4$, ${\bf O}$ is isomorphic to $S_4$ and ${\bf I}$ is isomorphic to $A_5$.

Turning to $S^3$, it is well known that two finite subgroups of $S^3$ are isomorphic if and only if they are conjugate in $S^3$ and up to conjugation the finite subgroups are \cite[Th, 2.6.7]{Wolf}: 
\begin{itemize} 
\item ${\bf C}_d = \langle e^{i2\pi/d} \rangle$ the cyclic subgroup of $S^{1}_{\rm{std}}$ of order $d \geq 1$,
\item ${\bf D}_k^* = \pi^{-1}({\bf D})$ the \emph{binary dihedral} group (of order $4k$), 
\item ${\bf T}^* = \pi^{-1}({\bf T})$ the \emph{binary tetrahedral} group (order 24),
\item ${\bf O}^* = \pi^{-1}({\bf O})$ the \emph{binary octahedral} group (of order 48), and 
\item ${\bf I}^*= \pi^{-1}({\bf I})$ the \emph{binary icosahedral} group  (of order 120)
\end{itemize}
with the latter three groups being referred to collectively as the \emph{binary polyhedral} groups. Guided by Goursat's Lemma, one can find the finite groups of $\SO(4) = \rho(S^3 \times S^3)$ as follows.

For $n \geq 3$ odd and $k \geq 3$, we define the finite subgroup ${\bf D}_{2^kn}'$ of $\SO(4)$ as follows. Let $\Z_{2n}$ be the cyclic normal subgroup of ${\bf D}^*_{n}$ of order $2n$, we have an isomorphism $\phi: {\bf D}^*_{n}/\Z_{2n} \to \Z_{2^k}/\Z_{2^{k-1}}$. We then set 
$$\widetilde{{\bf D}}_{2^{k+1}n} = \{(x,y) \in {\bf D}^*_{n} \times {\bf C}_{2^k} : [y] = \phi([x]) \},$$
where $[x] \in {\bf D}^*_{n}/\Z_{2n}$ and $[y] \in \Z_{2^k}/\Z_{2^{k-1}}$ denote the corresponding cosets of $x$ and $y$. Then, we define
$${\bf D}_{2^kn}' = \rho ( \widetilde{{\bf D}}_{2^{k+1}n}) = \widetilde{{\bf D}}_{2^{k+1}n}/ \{(1,1), (-1,-1) \}.$$

For $k \geq 1$ we define the group ${\bf T}_{3^k8}'$ of $\SO(4)$ of order $3^k 8$ as follows. Let $H_8$ be the inverse image under $\pi: S^3 \to \SO(3)$ of the subgroup of ${\bf T}$ generated by $P$ and $Z$. Then $H_8$ is a normal subgroup of ${\bf T}^*$ and is isomorphic to $D_2^*$. Then, ${\bf C}_{3^k}/{\bf C}_{3^{k-1}}$ and ${\bf T}^*/ H_8$ are isomorphic to $\Z_{3}$. Therefore, there are precisely two isomorphisms $\Psi_1, \Psi_2: {\bf T}^*/ H_8 \to {\bf C}_{3^k}/{\bf C}_{3^{k-1}}$. Let $\Psi$ be one such isomorphism and define the following subgroup of $S^3 \times S^3$
$$\widetilde{{\bf T}}_{3^k 8}(\Psi) = \{ (x,y) \in {\bf T}^* \times {\bf C}_{3^k} : [y] = \Psi([x]) \}.$$
One can check that $\widetilde{{\bf T}}_{3^k 8}(\Psi_1)$ and $\widetilde{{\bf T}}_{3^k 8}(\Psi_2)$ are isomorphic, so we let $\widetilde{{\bf T}}_{3^k 8}$ denote any subgroup of $S^3 \times S^3$ isomorphic to $\widetilde{{\bf T}}_{3^k 8}(\Psi)$. Since $(-1,-1) \not\in \widetilde{{\bf T}}_{3^k 8}$, we see $\rho$ restricted to $\widetilde{{\bf T}}_{3^k 8}$ is injective and we define the following subgroup of $\SO(4)$
$${\bf T}_{3^k 8}' \equiv \rho( \widetilde{{\bf T}}_{3^k 8}).$$
We note that, when $k =1$, this group is isomorphic to ${\bf T}^*$.

For integers $q \geq 1$ and $p_1, p_2 \in \Z$ with $(q, p_1) = (q, p_2) =1$, let $\gamma_{q ; p_1,p_2} = R(\frac{2\pi p_1}{q}) \oplus R(\frac{2\pi p_2}{q}) \in SO(4)$ and $\Gamma_{q; p_1, p_2} = \langle \gamma_{q; p_1, p_2} \rangle \, \leq \rho(S^1_{\rm{std}} \times S^1_{\rm{std}})$. Setting $\widetilde{\gamma}_{q; p_1, p_2} = ( \exp (\frac{i\pi}{q}(p_1+ p_2)), \exp(\frac{i\pi}{q} (p_2 - p_1)) )$ we see that $\gamma_{q; p_1, p_2} = \rho(\pm \widetilde{\gamma}_{q; p_1, p_2} )$ and  define $\widetilde{\Gamma}_{q; p_1, p_2} = \langle \pm \gamma_{q; p_1, p_2} \rangle$. Then, every cyclic subgroup of $\SO(4)$ is conjugate via an element of $\SO(4)$ to $\Gamma_{q; p_1, p_2}$ for some choice of $p_1, p_2$ and $q$. Furthermore, choosing $k$ to be congruent to $p_1^{-1}$ modulo $q$, we see that $\Gamma_{q;p_1, p_2} = \Gamma_{q; 1, kp_2}$ and conclude that up to conjugation every cyclic subgroup of $\SO(4)$ is of the form $\Gamma_{q; 1, p}$, where $(q, p) = 1$.

\begin{dfn}\label{dfn:GroupTypes}
A finite subgroup $\Gamma$ of $\SO(4)$ is said to be of
\begin{enumerate}
\item \emph{Type I} if $\Gamma$ is isomorphic to $\Z_q$ for some $q \geq 1$, in which case $\Gamma$ is conjugate in $\SO(4)$ to $\Gamma_{q;1,p} = \rho(\widetilde{\Gamma}_{q;1,p})$ for some choice of $p$ relatively prime to $q$;
\item \emph{Type II} if $\Gamma$ is isomorphic to ${\bf D}_{2^kn}' \times \, {\bf C}_q \simeq \rho (\widetilde{{\bf D}}_{2^{k+1}n} \times \, {\bf C}_{q}) \leq \rho({\bf D}_n^{*} \times \, {\bf C}_{2^{k}q})$
for $(q, 2n)= 1$, $n \geq 3$ odd, $k \geq 3$ and whereby $\widetilde{{\bf D}}_{2^{k+1}n} \times \, {\bf C}_{q}$ we mean the subgroup of ${\bf D}_n^{*} \times {\bf C}_{2^{k}q}$ ($\leq S^3 \times S^{1}_{\rm{std}}$) generated by $\widetilde{{\bf D}}_{2^{k+1}n}$ and  $1\times \, {\bf C}_{q}$;
\item \emph{Type III} if $\Gamma$ is isomorphic to ${\bf D}_n^* \times {\bf C}_q $ for $(q,2n) = 1$ and $n \geq 2$;
\item \emph{Type IV} if $\Gamma$ is isomorphic to $
{\bf T}_{3^k 8}' \times {\bf C}_q = \rho(\widetilde{{\bf T}}_{3^{k}8} \times {\bf C}_{q}) \leq \rho({\bf T}^* \times {\bf C}_{q3^k})$ for $(q,6) = 1$ and $k \geq 1$; whereby $\widetilde{{\bf T}}_{3^{k}8} \times {\bf C}_{q}$ we mean the subgroup of ${\bf T}^* \times {\bf C}_{3^{k}q}$ ($\leq S^3 \times S^{1}_{\rm{std}}$) generated by $\widetilde{{\bf T}}_{3^{k}8}$ and $1 \times {\bf C}_{q}$;

\item \emph{Type V} if $\Gamma$ is isomorphic to ${\bf O}^* \times {\bf C}_q $ for $(q,6) = 1$;
\item \emph{Type VI} if $\Gamma$ is isomorphic to ${\bf I}^* \times {\bf C}_q $ for $(q,30) = 1$.
\end{enumerate}

\end{dfn}

\begin{rem}
In general, up to isomorphism, groups of Type II-VI are of the form $\Delta \times {\bf C}_q = \rho(\widetilde{\Delta} \times {\bf C}_q)$, where $(1)$ 
$\Delta$ is ${\bf D}_{2^{k}n}'$, ${\bf T}_{3^{k}8}'$, ${\bf O}^*$, ${\bf I}^*$, or ${\bf D}_n^{*}$ and $(2)$ $(q, |\Delta|) =1$.  
\end{rem}

\begin{rem}\label{rem:BinarySubgroups} The binary dihedral subgroups of $\SO(4)$ are the Type III groups with $q =1$. Recall that ${\bf T}_{3^k 8}'$ is isomorphic to ${\bf T}^*$ when $k=1$, hence the binary polyhedral subgroups of $\SO(4)$ correspond to the groups of Type V and VI with $q =1$ and Type IV with $q=k=1$. 
\end{rem}

\begin{rem} The manifold ${\bf I}^* \backslash S^3$ is known as the \emph{Poincare dodecahedral space}. It is the unique homology three-sphere with finite non-trivial fundamental group. It has been proposed as an explanation for weak wide-angle temperature cancellations in the cosmic microwave background \cite{Weeks}.  
\end{rem}

\begin{dfn}
An elliptic three-manifold is said to be a \emph{lens space} if $M$ is diffeomorphic to $L(q;1,p) \equiv \Gamma_{q;1,p} \backslash S^3$ for some choice of relatively prime integers $q \geq 1$ and $p$.
\end{dfn}

The following theorem due to Seifert and Threlfall provides a classification of all elliptic three-manifolds. In light of the Elliptization Theorem, it also provides a topological categorization of all closed three-manifolds with finite fundamental group.

\begin{thm}[cf. \cite{SeifertThrelfall, Cohen} and Proposition 1.1 of \cite{Ikeda79}] \label{SeifertThrelfalltheorem}
Every finite subgroup of $\SO(4) = \rho(S^3 \times S^3)$ is of Type I-VI. If $\Gamma_1, \Gamma_2 \leq \SO(4)$ act freely on $S^3$ and are not cyclic of order at least three, then $\Gamma_1$ and $\Gamma_2$ are isomorphic if and only if they are conjugate in $\operatorname{O}(4)$. More specifically, for $\Gamma_1, \Gamma_2 \leq \SO(4)$ acting freely on $S^3$, we have the following.

\begin{enumerate}
\item For $j =1,2$, let $\Gamma_j \leq \SO(4)$ be Type I and of order 1 or 2. Then, $\Gamma_1$ and $\Gamma_2$ are isomorphic if and only if $\Gamma_1$ and $\Gamma_2$ are conjugate in $\operatorname{O}(4)$. In this case we conclude both groups are isomorphic to the trivial group or $ \langle -I_4 \rangle$, where $I_4$ is the identity map on $\R^4$. 

\item For $j =1,2$, let $\Gamma_j \leq \SO(4)$ be Type II-VI. Then, $\Gamma_1$ and $\Gamma_2$ are isomorphic if and only if $\Gamma_1$ and $\Gamma_2$ are conjugate in $\operatorname{O}(4)$. In which case we see the Type II-VI subgroups are of the form $\rho((\widetilde{\Delta} \times {\bf C}_q)^{(g_1,g_2)})$ or $\rho(({\bf C}_q \times \widetilde{\Delta})^{(g_1,g_2)})$, where $\widetilde{\Delta} = {\bf D}_k^*, {\bf T}^*, {\bf I}^*, {\bf O}^*, \widetilde{{\bf D}}_{2^{k+1}n}$ or  $\widetilde{{\bf T}}_{3^{k}8}$, $(g_1, g_2) \in S^3 \times S^3$ and by $\widetilde{\Delta} \times {\bf C}_q$ (respectively, ${\bf C}_q \times \widetilde{\Delta}$) we mean the subgroup of $S^3\times S^{1}_{\rm{std}}$ (respectively, $S^{1}_{\rm{std}} \times S^3$) generated by $\widetilde{\Delta}$ and  $1\times {\bf C}_q$ (respectively, $\widetilde{\Delta}$ and  ${\bf C}_q \times 1$).

\item For $j =1,2$, let $\Gamma_j \leq \SO(4)$ be Type I of order at least $q \geq 3$. Then, up to conjugation in $\SO(4)$, $\Gamma_j = \Gamma_{q; 1,p_j}$. Then, $\Gamma_1 \simeq \Gamma_2$ if and only if $p_2 \equiv \pm p_1, \pm p_1^{-1} \mod q$, which is equivalent to $\Gamma_1$ and $\Gamma_2$ are conjugate in $\operatorname{O}(4)$. In fact, $p_2 \equiv \pm p_1^{-1} \mod q$ if and only if $\Gamma_1$ and $\Gamma_2$ are conjugate in $\SO(4)$, and $p_2 \equiv \pm p_1 \mod q$ if and only if $\Gamma_1$ and $\Gamma_2$ are conjugate in $\operatorname{O}(4) - \SO(4)$.
\end{enumerate}
\end{thm}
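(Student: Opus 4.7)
The plan is to establish the theorem in three stages, one per claim, with the overall strategy of classifying finite subgroups of $\SO(4)$ via the double cover $\rho: S^3 \times S^3 \to \SO(4)$ and then analyzing conjugacy one family at a time. First I would apply Goursat's lemma to $S^3 \times S^3$: every finite subgroup $\widetilde{\Gamma} \leq S^3 \times S^3$ is described by a quintuple $(H_1, H_1', H_2, H_2', \varphi)$, where $H_j' \triangleleft H_j \leq S^3$ and $\varphi: H_1/H_1' \to H_2/H_2'$ is an isomorphism, with $\widetilde{\Gamma} = \{(x_1,x_2) \in H_1 \times H_2 : \varphi([x_1]) = [x_2]\}$. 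Combined with the recalled list of finite subgroups of $S^3$ and the freeness condition---since $\rho(q_1,q_2)$ acts on $S^3$ by $x \mapsto q_1 x q_2^{-1}$ and has a fixed point iff $q_1$ and $q_2$ are conjugate in $S^3$, i.e., share the same real part---one verifies that the finite, freely-acting subgroups of $\SO(4)$ are precisely those of Types I--VI in \dref{dfn:GroupTypes}.

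For part (1), the only Type I subgroups of order at most two that act freely are $\{I\}$ and $\{\pm I_4\}$, both of which are normal in $\operatorname{O}(4)$, so the claim is immediate. For part (2), I would show that two isomorphic Type II--VI subgroups $\Gamma_1, \Gamma_2 \leq \SO(4)$ must share Goursat data up to independent $S^3$-conjugations of the two factors---using that finite subgroups of $S^3$ of the same isomorphism type are $S^3$-conjugate---and, for the Type III case where $\widetilde{\Delta}$ can sit on either side of $\rho$, up to a swap of the two factors. Per-factor conjugations lift to $\SO(4)$-conjugations of the whole group, while the left-right swap is realized by any element $A \in \operatorname{O}(4) - \SO(4)$ satisfying $A\rho(q_1,q_2)A^{-1} = \rho(q_2,q_1)$; the quaternion conjugation $q \mapsto \bar q$ exhibits such an $A$ by a direct computation.

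The main technical step, and the one I expect to be most delicate, is part (3). Any cyclic subgroup of $\SO(4)$ of order $q \geq 3$ is generated by an element whose eigenvalue structure forces it to be $\SO(4)$-conjugate into the standard maximal torus $T^{2}_{\rm{max}}$, hence to some $\Gamma_{q;1,p}$ with $p$ coprime to $q$. The remaining analysis reduces to computing the orbits of the pair $(1,p)$ under the normalizer actions $W_{\SO(4)}=N_{\SO(4)}(T^{2}_{\rm{max}})/T^{2}_{\rm{max}}$ and $W_{\operatorname{O}(4)}=N_{\operatorname{O}(4)}(T^{2}_{\rm{max}})/T^{2}_{\rm{max}}$, each generated by a swap of the two $\SO(2)$-blocks together with simultaneous (respectively, single) sign-flips on the blocks. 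Since the subgroup $\Gamma_{q;1,p}$ is represented equally well by any coprime power of its generator, I would track the induced action on $p \bmod q$ and verify that the $\operatorname{O}(4)$-orbit of $\Gamma_{q;1,p_1}$ consists of those $\Gamma_{q;1,p_2}$ with $p_2 \equiv \pm p_1^{\pm 1} \bmod q$, with the refined partition into $\SO(4)$- and $(\operatorname{O}(4) - \SO(4))$-orbits as asserted. The primary obstacle is the bookkeeping here, namely ensuring every conjugation has been accounted for and is correctly attributed to $\SO(4)$ versus $\operatorname{O}(4) - \SO(4)$.
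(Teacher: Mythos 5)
First, a point of orientation: the paper does not prove this theorem. It is recalled from Seifert--Threlfall (via Cohen and Ikeda, as the bracketed citation indicates) purely to fix notation, so there is no in-paper argument to measure yours against; your proposal is supplying a proof where the paper supplies a reference. Your scaffolding is the standard one and matches the paper's own framing (``Guided by Goursat's Lemma\dots''): Goursat data for finite subgroups of $S^3\times S^3$, the fixed-point criterion that $\rho(q_1,q_2)$ fixes a point of $S^3$ iff $q_1$ and $q_2$ are conjugate in $S^3$ (equal real parts), per-factor $S^3$-conjugacy of isomorphic finite subgroups of $S^3$, quaternionic conjugation realizing the factor swap in $\operatorname{O}(4)-\SO(4)$, and the torus-normalizer analysis for cyclic groups. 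Part (1) is complete as you state it. The substantive gap is in part (2), at exactly the step you assert rather than prove: that isomorphic Type II--VI subgroups ``must share Goursat data up to independent $S^3$-conjugations.'' This is the crux of the uniqueness statement and requires (i) ruling out accidental abstract isomorphisms between groups built from different Goursat quintuples, and (ii) checking that the finitely many admissible gluing isomorphisms $\varphi$ for fixed $(H_1,H_1',H_2,H_2')$ yield \emph{conjugate} subgroups --- e.g.\ for $\widetilde{{\bf T}}_{3^k8}$ there are two gluings $\Psi_1,\Psi_2$, and conjugacy of $\widetilde{{\bf T}}_{3^k8}(\Psi_1)$ and $\widetilde{{\bf T}}_{3^k8}(\Psi_2)$ must be verified, not just isomorphism. (Also, the swap is needed for all of Types II--VI, not only Type III.) Your sketch names the right mechanism but leaves the actual verification, which is where essentially all of the work lies.

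In part (3) there is a concrete problem with your stated goal of verifying ``the refined partition \dots as asserted.'' Carrying out the normalizer computation you describe, the block swap $F_1\in\SO(4)$ sends $\Gamma_{q;1,p}$ to $\Gamma_{q;p,1}=\Gamma_{q;1,p^{-1}}$, the simultaneous sign flip replaces the generator by its inverse (fixing the group), and a single sign flip --- which lies in $\operatorname{O}(4)-\SO(4)$ --- produces $\Gamma_{q;1,-p}$. Hence the computation yields: conjugate in $\SO(4)$ iff $p_2\equiv p_1^{\pm1}\bmod q$, and conjugate by an element of $\operatorname{O}(4)-\SO(4)$ iff $p_2\equiv -p_1^{\pm1}\bmod q$. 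This does \emph{not} agree with the refinement as literally printed in the statement ($p_2\equiv\pm p_1^{-1}$ for $\SO(4)$, $p_2\equiv\pm p_1$ for $\operatorname{O}(4)-\SO(4)$), and the printed version fails on the test case $p_1=1$, $p_2=-1$, $q\ge3$: here $\Gamma_{q;1,1}=\rho({\bf C}_q\times 1)\le L$ and $\Gamma_{q;1,-1}=\rho(1\times{\bf C}_q)\le R$, and since $L$ and $R$ are normal in $\SO(4)$ with $L\cap R=\{\pm I_4\}$, these groups cannot be conjugate in $\SO(4)$ even though $-1\equiv -p_1^{-1}$. Your computation, done correctly, therefore corrects the last sentence of the statement rather than confirming it (and it is the corrected version that the paper itself uses later, in Remark~\ref{rem:TypeIConjugacy} and in the proof of Theorem~\ref{thm:LocHomogEllipticThreeManifoldsV2}). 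You also need the small supporting observation that a conjugator between two cyclic subgroups of $T^{2}_{\rm{max}}$ can be taken in $N_{\operatorname{O}(4)}(T^{2}_{\rm{max}})$: for $p\not\equiv\pm1$ this holds because $T^{2}_{\rm{max}}$ is the identity component of the centralizer, while the cases $p\equiv\pm1$ require the separate $L$/$R$ argument above.
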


\begin{rem}\label{rem:LeftRight}
It will be useful to recall that $L = \rho(S^3 \times 1)$ and $R = \rho(1 \times S^3)$ are normal subgroups of $\SO(4)$ and $L^A = R$ for any $A \in \operatorname{O}(4) - \SO(4)$. 
\end{rem}

\begin{rem}\label{rem:TypeIConjugacy}
Explicit diffeomorphisms between the lens spaces $L(q;1, p)$, $L(q;1, -p)$, $L(q;1, p^{-1})$ and $L(q;1, - p^{-1})$ are found by noting that the group $\Gamma_{q;1,p}$ is conjugate to $\Gamma_{q;1, p^{-1}}$ via 
$$F_1 \equiv  \left(\begin{array}{cc}0 & I_2 \\I_2 & \ 0\end{array}\right)\in \SO(4),$$ where $I_2$ denotes the $2\times 2$ identity matrix,  and $\Gamma_{q;1,-p} = \Gamma_{q;1,p}^{F_2}$, where 
$$F_2 \equiv I_2 \oplus \left(\begin{array}{cc}0 & -1 \\-1 & 0\end{array}\right) \in \operatorname{O}(4) - \SO(4).$$
In particular, since $F_1 \in N_{\Diff(S^3)}(L)$, Lemma~\ref{lem:Basic} informs us that  $\overline{\mathscr{R}}_{S^3}^{\Gamma_{q;1,p}}(L(q;1,p))$ and $\overline{\mathscr{R}}_{S^3}^{\Gamma_{q;1,p^{-1}}}(L(q;1,p^{-1}))$ determine the same isometry classes of locally homogeneous metrics on the lens space $L(q;1,p)$. 
\end{rem}

Theorem \ref{SeifertThrelfalltheorem}  implies the following classification of spherical space forms of constant positive sectional curvature. 

\begin{cor}
Let $\mathbb{S}^3$ denote the three-sphere equipped with a metric of constant sectional curvature one.
Suppose $\Gamma_1 \backslash S^3$ and $\Gamma_2 \backslash S^3$ are two elliptic manifolds, where $\Gamma_1$ and  $\Gamma_2$ are isomorphic.
\begin{enumerate} 
\item If $\Gamma_1 \simeq \Gamma_2$ is Type I of order at most two or $\Gamma_1 \simeq \Gamma_2$ is Type II -VI, then $\Gamma_1$ and $\Gamma_2$ are conjugate in $\operatorname{O}(4)$ and the manifolds $\Gamma_1 \backslash \mathbb{S}^3$ and $\Gamma_2 \backslash \mathbb{S}^3$ are isometric.
\item If $\Gamma_1 \simeq \Gamma_2$ is Type I with $q \geq 3$ (i.e., cyclic of order at least three), 
then for appropriate choices of $p_1$ and $p_2$ we have $\Gamma_j \backslash S^3 \simeq L(q; 1, p_j)$ for $j =1, 2$, and the following four statements are equivalent
\begin{enumerate}
\item $\Gamma_1 \backslash S^3$ and $\Gamma_2 \backslash S^3$ are diffeomorphic
\item $p_2 \equiv \pm p_1 \mod q$ or $p_2 \equiv \pm p_1^{-1} \mod q$
\item $\Gamma_1$ and $\Gamma_2$ are conjugate in $\operatorname{O}(4)$
\item $\Gamma_1 \backslash \mathbb{S}^3$ and $\Gamma_2 \backslash \mathbb{S}^3$ are isometric
\end{enumerate}
\end{enumerate}
\end{cor}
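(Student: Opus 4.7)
The plan is to deduce the corollary directly from Theorem~\ref{SeifertThrelfalltheorem} together with the fact that $\Isom(\mathbb{S}^3) = \operatorname{O}(4)$. The guiding principle is the familiar one for spherical space forms: diffeomorphism, $\operatorname{O}(4)$-conjugacy of the covering groups, and isometry of the round quotients are all equivalent, so the combinatorial content lives entirely in Seifert--Threlfall.

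First I would record the key linking observation: if $\Gamma_1, \Gamma_2 \leq \operatorname{O}(4)$ act freely on $S^3$, then $\Gamma_2 = A\Gamma_1 A^{-1}$ for some $A \in \operatorname{O}(4)$ if and only if $\Gamma_1 \backslash \mathbb{S}^3$ and $\Gamma_2 \backslash \mathbb{S}^3$ are isometric. The ``only if'' direction is a direct application of Lemma~\ref{lem:Basic} with $G_1 = G_2 = \operatorname{O}(4)$ and $g$ the round metric, using that $A \in \operatorname{O}(4) = \Isom(\mathbb{S}^3)$, so that $A$ descends to an isometry of the quotients. The ``if'' direction is the standard lift of an isometry $\bar{f}: \Gamma_1 \backslash \mathbb{S}^3 \to \Gamma_2 \backslash \mathbb{S}^3$ to the simply connected Riemannian cover: $\bar{f}$ lifts to an isometry $f: \mathbb{S}^3 \to \mathbb{S}^3$, i.e.\ to an element $f \in \operatorname{O}(4)$, and the covering compatibility of $\bar{f}$ forces $f \Gamma_1 f^{-1} = \Gamma_2$.

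With the linking observation in hand, part (1) is immediate from Theorem~\ref{SeifertThrelfalltheorem}(1)--(2): isomorphic subgroups of Type I of order at most two, or of Types II--VI, are conjugate in $\operatorname{O}(4)$, and this conjugation produces an isometry of the round quotients. For part (2), Theorem~\ref{SeifertThrelfalltheorem}(3) directly supplies the equivalences (a) $\Leftrightarrow$ (b) $\Leftrightarrow$ (c), and the linking observation upgrades these to the further equivalence with (d). No serious obstacle is anticipated; the only step requiring any care is the isometric lift in the linking observation, which is a routine consequence of $\mathbb{S}^3 \to \Gamma_j \backslash \mathbb{S}^3$ being a Riemannian covering of complete manifolds.
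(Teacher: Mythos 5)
Your proposal is correct and matches the paper's intended derivation: the paper offers no explicit proof, simply asserting that the corollary follows from Theorem~\ref{SeifertThrelfalltheorem}, and your linking observation (conjugacy in $\operatorname{O}(4)=\Isom(\mathbb{S}^3)$ is equivalent to isometry of the round quotients, via Lemma~\ref{lem:Basic} in one direction and the standard lift of an isometry to the universal Riemannian cover in the other) is exactly the routine step being suppressed. The only point worth flagging is that the equivalence of (a) with (b) rests on the Seifert--Threlfall/Brody diffeomorphism classification of lens spaces, which the paper records alongside Theorem~\ref{SeifertThrelfalltheorem} rather than deriving from covering-space considerations.
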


\subsection{Isometry classes of left-invariant metrics on $S^3$}\label{sec:IsometryClassesOfLeftInvariantMetrics}
Let $G$ be a $d$-dimensional compact Lie group endowed with a fixed bi-invariant metric $g_0$ and let $\BiInner$ denote its value at the identity. The collection of left-invariant metrics on $G$ will be denoted by $\mathscr{R}_{\rm{left}}(G)$ and the corresponding collection of isometry classes of left-invariant metrics on $G$ will be denoted by $\overline{\mathscr{R}}_{\rm{left}}(G)$. An arbitrary left-invariant metric $g \textcolor{red}{}$ is completely prescribed by its value $\Inner$ at the identity and is related to our background bi-invariant metric $g_0$ via $$\langle X , Y \rangle = \langle \Omega (X) , Y \rangle_0,$$ where $\Omega: \germ{g} \to \germ{g}$ is a positive definite $\BiInner$-self-adjoint linear transformation. The eigenspaces, eigenvalues $\eta_1^2, \ldots, \eta_d^2$ and corresponding eigenvectors $u_1, \ldots, u_d$ of $\Omega$ are referred to as the \emph{metric eigenspaces\label{page:MetricEigenvalues}}, \emph{metric eigenvalues}, and \emph{metric eigenvectors} of $g$ (with respect to $g_0$), respectively. When all the metric eigenspaces of $g$ are one-dimensional, we will say that $g$ is \emph{generic}. 
Finally, mirroring the notation used in Section~\ref{sec:Notation}, given $\Gamma \leq \operatorname{Diff}(G)$ acting freely and properly discontinuously on $G$, we let $\mathscr{R}_{\rm{left}}^{\Gamma}(G)$ be the collection of $\Gamma$-invariant metrics in $\mathscr{R}_{\rm{left}}(G)$. Then, any  $h \in \mathscr{R}_{\rm{left}}^{\Gamma}(G)$ induces a locally homogeneous metric $h_\Gamma$. In this case we write:
\begin{itemize}
\item $\mathscr{R}_{\rm{left}}^{\Gamma}(\Gamma \backslash G) = \{h_\Gamma : h \in \mathscr{R}_{\rm{left}}^{\Gamma}(G) \}$ to denote the collection of locally homogeneous metrics on $\Gamma \backslash G$ that are the quotient metrics associated to metrics on $G$ that are simultaneously left-invariant and $\Gamma$-invariant, 
\item $\overline{\mathscr{R}}_{\rm{left}}^{\Gamma}(G) = \{ [h_\Gamma] : h \in \mathscr{R}_{\rm{left}}^{\Gamma}(G) \}$ to denote the isometry classes of the preceding metrics,
\item $\overline{\mathscr{R}}_{\rm{left}}( \Gamma \backslash G)$ denotes the isometry classes of metrics on $\Gamma \backslash G$ covered by a left-invariant metric on $G$.
\end{itemize}

\begin{dfn}\label{dfn:Multiset}
A \emph{multiset} in $\R$ is a map $m: \R \to \N \cup \{0\}$, where we think of $m(x)$ as the multiplicity of $x$ in the multiset. A multiset $m$ is said to be a \emph{$k$-multiset}, for $k \in \N$, if $m$ is non-zero at finitely many distinct values $x_1, \ldots , x_q$ and $\sum m(x_j) = k$. A $k$-multiset $m$ will be denoted by $[x_{11}, \ldots x_{1m(x_1)}, \ldots , x_{q1} \ldots , x_{qm(x_q)} ]$, where $x_{ij} = x_i$ for $j = 1, \ldots , m(x_i)$. The collection of $k$-multisets consisting of positive numbers will be denoted by $\mathscr{M}^{+}_k$. 
\end{dfn}

\noindent 
Define $\mathcal{E} : \mathscr{R}_{\rm{left}}(G) \to \mathscr{M}^{+}_{n}$ to be the map which sends a left-invariant metric $g \in \mathscr{R}_{\rm{left}}(G)$ to the $n$-multiset $[x_1 = x_1(g), \ldots , x_n = x_n(g) ]$ consisting of its metric eigenvalues (with respect to $g_0$).

In the case where $G$ is $S^3$ (respectively, $\SO(3)$) it has been shown that the metric eigenvalues determine a left-invariant metric up to isometry.  

\begin{prop}[\cite{Tapp}]\label{prop:MetricEigenvaluesSO(3)}
Two left-invariant metrics on $\SO(3)$ have the same multi-set of eigenvalues (with respect to a background bi-invariant metric $g_0$) if and only if they are isometric. That is, the map $\mathcal{E}$ induces a bijection $\overline{\mathcal{E}}: \overline{\mathscr{R}}_{\rm{left}}(\SO(3)) \to \mathscr{M}^{+}_{3}$ between the isometry classes of left-invariant metrics on $\SO(3)$ and the set of $3$-multisets of positive numbers.
\end{prop}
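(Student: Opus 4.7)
My approach is to exploit the fact that $\germ{so}(3)$ is a simple Lie algebra with trivial outer automorphism group, so that its full group of Lie algebra automorphisms is $\Ad(\SO(3))$, which in turn coincides with $\SO(\germ{so}(3), -B)$, the group of $-B$-orthogonal transformations of determinant one. I will handle the two directions separately.

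For the reverse direction, suppose $g_1, g_2 \in \mathscr{R}_{\rm{left}}(\SO(3))$ share the same multiset of metric eigenvalues, and let $\Omega_1, \Omega_2$ denote the associated $-B$-self-adjoint operators on $\germ{so}(3)$. By the spectral theorem, there exists $T \in \Orth(\germ{so}(3), -B)$ with $T^{-1} \Omega_1 T = \Omega_2$. Since $\dim \germ{so}(3) = 3$, I can modify $T$ within an eigenspace of $\Omega_2$---flipping a sign in a one-dimensional eigenspace, applying an element of $\Orth(2)$ in a two-dimensional eigenspace, or using any element of $\Orth(3)$ in the bi-invariant case---to arrange $\det T = +1$ without disturbing the conjugation identity. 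Hence $T \in \SO(\germ{so}(3), -B) = \Ad(\SO(3))$, so $T = \Ad(h)$ for some $h \in \SO(3)$. The conjugation map $C_h \colon \SO(3) \to \SO(3)$, $x \mapsto h x h^{-1}$, is a Lie group automorphism with differential $T$ at the identity, and a direct check at $e$, extended by left-invariance, yields $C_h^{\,*} g_1 = g_2$.

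For the forward direction, I would invoke the Ochiai-Takahashi-Onishchik description of the isometry group of a left-invariant metric on a compact simple Lie group (see Theorem~\ref{thm:IsometryGroupofSimpleLieGroup}), which implies that any isometry $\phi \colon (\SO(3), g_1) \to (\SO(3), g_2)$ decomposes as $\phi = L_a \circ F$ with $F$ a Lie group automorphism of $\SO(3)$. The differential $dF$ at the identity is then simultaneously a Lie algebra automorphism (hence $-B$-orthogonal) and a linear isometry from $(\germ{so}(3), \langle \cdot, \cdot \rangle_1)$ to $(\germ{so}(3), \langle \cdot, \cdot \rangle_2)$. Unpacking the latter condition yields $(dF)^{-1} \Omega_2 \, (dF) = \Omega_1$, so $\Omega_1$ and $\Omega_2$ are orthogonally conjugate and thus share the same spectrum.

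The main technical hurdle is the determinant adjustment step in the reverse direction: one must verify that for each possible eigenspace configuration of $\Omega_2$ there is enough freedom within the eigenspaces to arrange $\det T = +1$ while preserving the conjugation relation. In dimension three this is always achievable, but it requires a short case analysis according to the multiplicities of the metric eigenvalues of $g_2$.
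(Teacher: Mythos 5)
The paper does not actually prove this proposition; it is quoted from Brown--Finck--Spencer--Tapp--Wu \cite{Tapp}, so there is no internal argument to compare against. Your proposal is a reasonable self-contained substitute, and its skeleton is sound. The reverse direction is correct and can even be made uniform: to fix the determinant you need only an element of $\operatorname{O}(\germ{so}(3),-B)$ of determinant $-1$ commuting with $\Omega_2$, and the reflection negating a single $\Omega_2$-eigenvector and fixing its orthogonal complement always works, so no case analysis on multiplicities is required. You are also right that $\operatorname{Aut}(\germ{so}(3)) = \Ad(\SO(3)) = \SO(\germ{so}(3),-B)$, and the computation showing $C_h^{*}g_1 = g_2$ from $\Ad(h)^{-1}\Omega_1\Ad(h) = \Omega_2$ is exactly the argument of Lemma~\ref{lem:AutoIsometry} run with two metrics instead of one.

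The one place where your argument is thinner than it should be is the forward direction. Theorem~\ref{thm:IsometryGroupofSimpleLieGroup} describes $\Isom(G,g)$ for a \emph{single} left-invariant metric; it does not literally assert that an isometry $\phi\colon (\SO(3),g_1)\to(\SO(3),g_2)$ between two \emph{different} left-invariant metrics has the form $L_a\circ F$ with $F\in\operatorname{Aut}(\SO(3))$. To extract that, note that conjugation by $\phi$ carries $\Isom(\SO(3),g_2)^o$ isomorphically onto $\Isom(\SO(3),g_1)^o$; in the non-bi-invariant cases each of these groups is $L(G)$ or $L(G)\cdot S^1$, whose unique three-dimensional connected simple subgroup (equivalently, whose commutator subgroup) is $L(G)$, so $\phi$ normalizes $L(G)$ in $\Diff(\SO(3))$, and any such diffeomorphism fixing $e$ is an automorphism. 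In the bi-invariant case $\phi$ may instead carry $L(G)$ to $R(G)$, which is repaired by composing with the inversion $\iota$; since $d\iota_e = -\operatorname{Id}$ this does not affect the conclusion that $d\phi_e$ conjugates $\Omega_2$ to $\Omega_1$ orthogonally. (One should also observe that a left-invariant metric isometric to a bi-invariant one is itself bi-invariant, e.g.\ because constant sectional curvature is an isometry invariant, so the two cases of the theorem do not get mismatched.) With that supplement your proof is complete; the concluding bijectivity onto $\mathscr{M}^{+}_{3}$ then follows since every positive-definite $\BiInner$-self-adjoint $\Omega$ arises from some left-invariant metric.
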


When $\Gamma \leq \Diff(G)$ acts properly discontinuously and $g \in \mathscr{R}_G^{\Gamma}(G)$, the metric eigenvalues of $g_\Gamma$ are defined to be the metric eigenvalues of $g$. The preceding Proposition implies that for any $\Gamma \leq \operatorname{SO}(4)$ of Type I-VI the map $\mathcal{E} : \overline{\mathscr{R}}_{\rm{left}} (\Gamma \backslash S^3) \to \mathscr{M}^{+}_{3}$ sending an isometry class of a locally left-invariant metric on $\Gamma \backslash S^3$ to the eigenvalues of a left-invariant covering metric is well-defined and the following is immediate.

\begin{cor}\label{cor:MetricEigenvaluesSO(3)}
Let $\Gamma \leq \operatorname{SO}(4)$ be a subgroup of Type I-VI that acts freely on $S^3$. The map $\overline{\mathcal{E}} : \overline{\mathscr{R}}_{\rm{left}}(\Gamma \backslash S^3) \to \mathscr{M}_3^{+}$ given by $[h] \mapsto [x = x(h), y = y(h), z= z(h) ]$ is well-defined and $\overline{\mathcal{E}}([h_1]) = \overline{\mathcal{E}}([h_2])$ implies $[h_1]$ and $[h_2]$ are locally isometric. In the case where $\Gamma$ is trivial or $\Z_2$, this map is bijective; that is, the left-invariant metrics on $\SO(3)$ (respectively $S^3$) are determined up to isometry by the multiset of their metric eigenvalues.
\end{cor}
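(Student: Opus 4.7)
The plan is to first upgrade Proposition \ref{prop:MetricEigenvaluesSO(3)} from $\SO(3)$ to $S^3$, and then deduce the three assertions of the corollary---well-definedness, the local isometry implication, and bijectivity in the special cases---directly from this upgrade.

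The upgrade I would carry out as follows. Given $\tilde h_1, \tilde h_2 \in \mathscr{R}_{\rm{left}}(S^3)$ with the same multiset of metric eigenvalues relative to a fixed bi-invariant metric $g_0$ on $S^3$, observe that $\{\pm I_4\}$ is central in $S^3$ (identifying $S^3$ with the unit quaternions, it corresponds to $\{\pm 1\}$), so each $\tilde h_i$ is $\{\pm I_4\}$-invariant and descends to a left-invariant metric $(\tilde h_i)_{\{\pm I_4\}}$ on $\SO(3) = \{\pm I_4\} \backslash S^3$. Since the covering $\pi : S^3 \to \SO(3)$ is a local isometry between $g_0$ and its pushforward, the multiset of eigenvalues of $(\tilde h_i)_{\{\pm I_4\}}$ (computed with the pushforward) coincides with that of $\tilde h_i$. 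By Proposition \ref{prop:MetricEigenvaluesSO(3)}, $(\tilde h_1)_{\{\pm I_4\}}$ and $(\tilde h_2)_{\{\pm I_4\}}$ are isometric on $\SO(3)$, and uniqueness of the universal Riemannian cover then yields $(S^3, \tilde h_1) \cong (S^3, \tilde h_2)$. The reverse implication follows by running this argument in the opposite direction.

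For well-definedness, suppose $\tilde h_1, \tilde h_2 \in \mathscr{R}_{\rm{left}}(S^3)$ both cover a single $[h] \in \overline{\mathscr{R}}_{\rm{left}}(\Gamma \backslash S^3)$. Then the quotient metrics on $\Gamma \backslash S^3$ are isometric, and lifting this isometry to universal Riemannian covers yields $(S^3, \tilde h_1) \cong (S^3, \tilde h_2)$; the upgrade then forces $\tilde h_1$ and $\tilde h_2$ to share the same eigenvalues. For the local isometry implication, if $\overline{\mathcal{E}}([h_1]) = \overline{\mathcal{E}}([h_2])$, then left-invariant covering metrics $\tilde h_1, \tilde h_2$ on $S^3$ have identical eigenvalues, so by the upgrade they are isometric, providing a common universal Riemannian cover for $[h_1]$ and $[h_2]$.

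Finally, when $\Gamma$ is trivial, $\overline{\mathcal{E}}$ is precisely the upgraded bijection on $S^3$. When $\Gamma = \{\pm I_4\}$, every left-invariant metric on $S^3$ descends to a left-invariant metric on $\SO(3)$ by centrality, and conversely every left-invariant metric on $\SO(3)$ pulls back to a left-invariant metric on $S^3$ via the group homomorphism $\pi$; hence $\overline{\mathscr{R}}_{\rm{left}}(\{\pm I_4\} \backslash S^3)$ is identified with the set of isometry classes of left-invariant metrics on $\SO(3)$, and $\overline{\mathcal{E}}$ reduces to the map in Proposition \ref{prop:MetricEigenvaluesSO(3)}. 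The only real obstacle, though minor, is the transfer of Proposition \ref{prop:MetricEigenvaluesSO(3)} from $\SO(3)$ to $S^3$; the universal-cover argument above handles it cleanly because the relevant data (metric eigenvalues) lives on the common Lie algebra and is unaffected by the double cover.
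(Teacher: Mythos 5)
Your proposal is correct and follows essentially the same route as the paper: both reduce everything to Proposition~\ref{prop:MetricEigenvaluesSO(3)} by transferring it across the double cover $\pi: S^3 \to \SO(3)$ (which identifies left-invariant metrics and their eigenvalue multisets on the common Lie algebra) and then handle the general $\Gamma$ by passing between quotient metrics and their left-invariant universal Riemannian covers. The only step you wave at---that an isometry between two left-invariant metrics on $S^3$ descends to $\SO(3)$, which ultimately rests on isometries lying in $\operatorname{O}(4)$ and hence commuting with $-I_4$---is treated just as tersely in the paper, so this is not a substantive gap.
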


\begin{proof} 
The general statement follows from Proposition~\ref{prop:MetricEigenvaluesSO(3)} and the fact that every locally homogeneous metric on an elliptic three-manifold is locally isometric to a left-invariant metric on $S^3$. The case where $\Gamma\backslash S^3 = \SO(3)$ (i.e., $\Gamma \simeq \Z_2$) is precisely Proposition~\ref{prop:MetricEigenvaluesSO(3)}. And, the case where $\Gamma \backslash S^3 = S^3$ (i.e., $\Gamma$ is trivial) follows from Proposition~\ref{prop:MetricEigenvaluesSO(3)} since the two-fold covering map  $\pi: S^3 \simeq \SU(2) \rightarrow \SO(3) = \SU(2) / Z(\SU(2))$ induces a bijection between isometry classes of left-invariant metrics on $S^3 \simeq \SU(2)$ and $\SO(3)$.
\end{proof}

In subsequent sections it will be useful to distinguish the so-called Berger metrics on $S^3$ (respectively, $\SO(3)$) which are left-invariant metrics formed by scaling the standard constant curvature metric along the fibers of the Hopf fibration.

\begin{dfn}\label{dfn:BergerMetrics}
A left-invariant metric on $S^3$ (respectively, $\SO(3)$) is said to be \emph{naturally reductive} or \emph{Berger}, if it has a metric eigenvalue of multiplicity at least two. 
\end{dfn}

\subsection{Isometry groups of left-invariant metrics on $S^3$}\label{sec:IsometryGroups}
In preparation for our proof of Theorem~\ref{thm:LocHomogEllipticThreeManifolds}, it will be useful to know which subgroups $\Gamma \leq \SO(4)$ of Type I-VI can appear as subgroups of $\Isom(S^3, g)$, for a given left-invariant metric $g$ on $S^3$ (see Corollary~\ref{cor:DiscreteSubgroupsIsometryGroupsS^3}). We will achieve this by computing the isometry groups of the left-invariant metrics on $S^3$ (see Proposition~\ref{prop:IsometryGroupsS^3}). We begin with a general discussion of isometry groups of left-invariant metrics on a Lie group. 

\begin{lem}\label{lem:AutoIsometry}
Let $G$ be a Lie group with automorphism group $\operatorname{Aut}(G)$. Then, for any $g \in \mathscr{R}_{\rm{left}}(G)$ and $\alpha \in \operatorname{Aut}(G)$, $\alpha$ is an isometry of $(G,g)$  if and only if $\Inner \equiv g_e$ is $\alpha_{*}$-invariant.
\end{lem}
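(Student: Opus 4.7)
The plan is to reduce the isometry condition $\alpha^{*}g=g$ to an equation of bilinear forms at the identity, using the fact that $\alpha$ intertwines left translations and hence preserves the class of left-invariant tensors.

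First I would observe that every automorphism $\alpha$ fixes $e$, so $\alpha_{*}$ makes sense as an endomorphism of $\mathfrak{g}=T_{e}G$, and the defining identity $\alpha(hx)=\alpha(h)\alpha(x)$ gives the intertwining relation $\alpha\circ L_{h}=L_{\alpha(h)}\circ\alpha$ for every $h\in G$. Pulling back $g$ by both sides and using $L_{h}^{*}g=g$, this yields $L_{h}^{*}(\alpha^{*}g)=\alpha^{*}g$, so that $\alpha^{*}g$ is itself left-invariant. In particular $\alpha^{*}g=g$ globally if and only if $(\alpha^{*}g)_{e}=g_{e}$, since a left-invariant tensor is determined by its value at $e$.

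Next I would simply unpack $(\alpha^{*}g)_{e}$: for $X,Y\in\mathfrak{g}$,
\[
(\alpha^{*}g)_{e}(X,Y)=g_{\alpha(e)}(\alpha_{*}X,\alpha_{*}Y)=g_{e}(\alpha_{*}X,\alpha_{*}Y)=\langle\alpha_{*}X,\alpha_{*}Y\rangle.
\]
Combining this with the previous paragraph, $\alpha$ is an isometry of $(G,g)$ precisely when $\langle\alpha_{*}X,\alpha_{*}Y\rangle=\langle X,Y\rangle$ for all $X,Y\in\mathfrak{g}$, which is the $\alpha_{*}$-invariance of $\langle\cdot,\cdot\rangle$.

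There is no real obstacle here; the only mild point worth highlighting in the write-up is the verification that $\alpha^{*}g$ is again left-invariant, which is exactly where the automorphism hypothesis (as opposed to a mere diffeomorphism fixing $e$) is used, and which is what allows us to collapse the pointwise equality $\alpha^{*}g=g$ to the single linear-algebraic condition at the identity.
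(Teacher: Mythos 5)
Your proof is correct and rests on the same key fact as the paper's: the intertwining relation $\alpha\circ L_{h}=L_{\alpha(h)}\circ\alpha$, which lets you collapse the global condition $\alpha^{*}g=g$ to a single condition at the identity. The paper packages this as a direct computation with a left-invariant orthonormal frame (extending an $\Inner$-orthonormal basis of $T_{e}G$ and checking that $\alpha_{*}$ carries frame vectors at $p$ to an orthonormal set at $\alpha(p)$), whereas you phrase it as ``$\alpha^{*}g$ is again left-invariant, hence determined by its value at $e$''; the two are the same argument in different clothing, with yours having the minor advantage of treating both directions of the equivalence uniformly.
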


\begin{proof}
The ``only if'' statement is clear. Now, suppose $\Inner$ is $\alpha_{*}$-invariant. Fix a $\Inner$-orthonormal basis $\{e_1, \ldots , e_n \}$ of $T_eG$ and let $E_1, \ldots , E_n$ be its extension to a left-invariant $g$-orthonormal framing of $G$. Then, since $\alpha_{*}(E_{jp}) = \alpha_{*}(L_{p*}(e_j)) = L_{\alpha(p)*}(\alpha_{*}(e_j))$, we find 
\begin{eqnarray*}
g(\alpha_{*}(E_{ip}), E_{jp}) &=& g(L_{\alpha(p)*}(\alpha_{*}(e_i)), L_{\alpha(p)*}(\alpha_{*}(e_j))) \\
&=& \langle \alpha_{*}(e_i), \alpha_{*}(e_j) \rangle \\
&=& \langle e_j , e_i \rangle \\
&=& \delta_{ij} \\
&=& g(E_{ip}, E_{jp}).
\end{eqnarray*}
Hence, $\alpha$ is an isometry of $(G,g)$.
\end{proof}

The following statement is an immediate consequence.

\begin{cor}\label{cor:AutoIsometry}
Let $G$ be a Lie group with left-invariant metric $g$ and for any $h \in G$, let $C_h \equiv L_{h} \circ R_{h^{-1}}$ be conjugation by $h$. Given $h \in G$ we see the following statements are equivalent.
\begin{enumerate}
\item $R_h \in \Isom(G,g)$
\item $C_{h} \in \Isom(G,g)$
\item $\Inner = g_e$ is $\Ad(h)$-invariant.
\end{enumerate}
\end{cor}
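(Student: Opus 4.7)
The corollary is essentially a packaging of \lref{lem:AutoIsometry} in the special case of inner automorphisms, and my plan is to exploit that directly. The key observation is that conjugation $C_h = L_h \circ R_{h^{-1}}$ is a Lie group automorphism of $G$ whose differential at the identity is, by definition, $\Ad(h) : \germ{g} \to \germ{g}$. Thus \lref{lem:AutoIsometry} applies verbatim to $\alpha = C_h$ and yields immediately the equivalence (2) $\iff$ (3): $C_h$ is an isometry of $(G,g)$ if and only if $\Inner = g_e$ is $\Ad(h)$-invariant.

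For the equivalence (1) $\iff$ (2), the plan is to use that left translations are always isometries of a left-invariant metric, together with closure of $\Isom(G,g)$ under composition and inversion. Writing
\[
R_{h^{-1}} = L_{h^{-1}} \circ C_h \quad \text{and} \quad R_h = (R_{h^{-1}})^{-1},
\]
one sees that $R_h \in \Isom(G,g)$ if and only if $R_{h^{-1}} \in \Isom(G,g)$, which in turn holds if and only if $L_{h^{-1}} \circ C_h \in \Isom(G,g)$; since $L_{h^{-1}}$ is already an isometry, this is equivalent to $C_h \in \Isom(G,g)$.

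Chaining these gives (1) $\Leftrightarrow$ (2) $\Leftrightarrow$ (3). There is no real obstacle here: the only subtlety worth flagging explicitly is the identification $(C_h)_{*,e} = \Ad(h)$, which is just the defining relation of the adjoint representation and licenses the passage between \lref{lem:AutoIsometry}'s condition on $\alpha_*$ and the $\Ad(h)$-invariance statement in (3).
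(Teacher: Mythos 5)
Your proposal is correct and is exactly the argument the paper intends: the paper states the corollary as an immediate consequence of Lemma~\ref{lem:AutoIsometry}, and your unpacking---applying that lemma to the inner automorphism $C_h$ with $(C_h)_{*,e} = \Ad(h)$ for $(2)\Leftrightarrow(3)$, and using $R_{h^{-1}} = L_{h^{-1}} \circ C_h$ together with the fact that left translations are isometries for $(1)\Leftrightarrow(2)$---is the natural way to make that immediate consequence explicit.
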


We recall the following facts about the isometry groups of compact simple Lie groups. 

\begin{thm}\label{thm:IsometryGroupofSimpleLieGroup}[ see Thm. 4 and 6 of \cite{Onishchik}]
Let $\cdot$ be the group operation given by composition of functions and let $G$ be a compact simple Lie group equipped with a left-invariant metric $g$. 
\begin{enumerate}
\item If $g$ is not bi-invariant, then $\Isom(G,g) = L(G) \cdot B$, where $B$ is a subgroup of  $\operatorname{Aut}(G)$.
\item If $g$ is bi-invariant, then $\Isom(G,g) = \left( L(g) \cdot \operatorname{Aut}(G) \right) \rtimes \langle \iota \rangle$, where $\iota : G \to G$ is the inversion map.
\end{enumerate}
\end{thm}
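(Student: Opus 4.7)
\emph{Plan of the proof.} The first move is to use the transitivity of $L(G) \subseteq \Isom(G,g)$ to write $\Isom(G,g) = L(G) \cdot K$, where $K = \Isom(G,g)_e$ is the isotropy subgroup at the identity $e \in G$. Because an isometry of a connected Riemannian manifold is determined by its value and differential at a single point, restricting to the differential at $e$ embeds $K$ into the orthogonal group $O(\mathfrak{g}, \langle\cdot,\cdot\rangle)$ of $\mathfrak{g} = T_eG$. The theorem thus amounts to identifying which orthogonal transformations of $\mathfrak{g}$ integrate to global isometries of $(G,g)$.

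For (1), I would next establish that $L(G)$ is a \emph{normal} subgroup of the identity component $\Isom(G,g)^o$. Granted this, every $\phi \in K^o$ acts on $L(G)$ by conjugation; transferring along the isomorphism $L : G \to L(G)$ yields a smooth homomorphism $K^o \to \operatorname{Aut}(G)$, and one checks directly that this homomorphism coincides with the action of $\phi$ on $G$ itself. A standard component-by-component argument extends the conclusion from $K^o$ to all of $K$, giving $K \leq \operatorname{Aut}(G)$ and hence the decomposition $\Isom(G,g) = L(G) \cdot B$ with $B = K \leq \operatorname{Aut}(G)$.

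For (2), when $g$ is bi-invariant both $R(G)$ and the inversion $\iota : a \mapsto a^{-1}$ become isometries: $R(G)$ because $g$ is right-invariant, and $\iota$ because $d\iota_e = -\Id$ preserves $\langle\cdot,\cdot\rangle$ and intertwines left with right translations. Since compactness and simplicity force every bi-invariant metric to be a negative scalar multiple of the Killing form, every $\alpha \in \operatorname{Aut}(G)$ preserves $g$, so $L(G) \cdot \operatorname{Aut}(G) \subseteq \Isom(G,g)$. One then verifies that $\iota$ is an anti-homomorphism (hence outside $L(G) \cdot \operatorname{Aut}(G)$) and that conjugation by $\iota$ normalizes this subgroup, producing the semidirect product $(L(G) \cdot \operatorname{Aut}(G)) \rtimes \langle \iota \rangle$, while a dimension count or inspection of the isotropy rules out further isometries.

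The principal obstacle is verifying the normality of $L(G)$ in $\Isom(G,g)^o$, and this is where the compact-simple hypothesis does the real work. The Lie algebra of $\Isom(G,g)^o$ decomposes as $\mathfrak{g} + \mathfrak{k}$, where $\mathfrak{g}$ records the infinitesimal left translations and $\mathfrak{k}$ is the Lie algebra of $K^o$; one must show $[\mathfrak{k}, \mathfrak{g}] \subseteq \mathfrak{g}$. The simplicity of $\mathfrak{g}$ constrains the possible ideals severely, and compactness ensures a clean reductive decomposition with faithful orthogonal isotropy representation. I would either invoke the Ochiai--Takahashi classification of compact transitive actions on compact simple Lie groups to conclude that $L(G)$ is the unique minimal transitive subgroup of $\Isom(G,g)^o$, or carry out a direct Lie-algebra argument using that the isotropy representation of $\mathfrak{k}$ on $\mathfrak{g}$ acts by derivations of the bracket once $g$ fails to be bi-invariant.
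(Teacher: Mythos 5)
The first thing to say is that the paper does not prove this statement at all: it is imported verbatim from Onishchik (Theorems 4 and 6 of \cite{Onishchik}), and the remark immediately following it in the text says exactly that, noting that it rests on Onishchik's generalization of \cite{OchiaiTakahashi} and \cite{DAtriZiller}. So there is no in-paper argument to match yours against; the relevant comparison is whether your sketch actually supplies a proof or merely re-derives the same citation. Your opening reduction is sound and standard: write $\Isom(G,g) = L(G)\cdot K$ with $K$ the isotropy at $e$, and observe that if $\phi \in K$ normalizes $L(G)$, then $\phi L_a \phi^{-1} = L_{\sigma(a)}$ for an automorphism $\sigma$ of $G$, and evaluating at $e$ forces $\phi = \sigma \in \operatorname{Aut}(G)$. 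Everything therefore hinges on showing that $K$ normalizes $L(G)$, and you correctly identify this as the crux.

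The gap is that neither of your two proposed resolutions of that crux closes it. The ``direct Lie-algebra argument'' is circular as stated: saying that the isotropy representation of $\mathfrak{k}$ on $\mathfrak{g}$ ``acts by derivations of the bracket'' is equivalent to saying that the isotropy consists of differentials of automorphisms, which is the conclusion of part (1), not an available hypothesis; and $[\mathfrak{k},\mathfrak{g}]\subseteq\mathfrak{g}$ is not forced merely by simplicity of $\mathfrak{g}$ (the bi-invariant case, where $\mathfrak{i}\cong\mathfrak{g}\oplus\mathfrak{g}$ and $\mathfrak{k}$ is the diagonal acting by $\Ad$, shows the decomposition $\mathfrak{i}=\mathfrak{g}+\mathfrak{k}$ alone decides nothing). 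The other option, invoking Ochiai--Takahashi, only controls $\Isom(G,g)^o$, so at best it yields $K^o \leq \operatorname{Aut}(G)$; the passage to the full isotropy group $K$ is not a routine ``component-by-component argument'' but is precisely the additional content that Onishchik's Theorems 4 and 6 supply. A parallel issue arises in part (2): the containment $(L(G)\cdot\operatorname{Aut}(G))\rtimes\langle\iota\rangle \subseteq \Isom(G,g)$ is argued correctly (and your reason that $\iota \notin L(G)\cdot\operatorname{Aut}(G)$ should be that $-\Id$ is not a Lie algebra automorphism of a non-abelian $\mathfrak{g}$, rather than that $\iota$ is an anti-homomorphism), but ruling out further isometries is the classical determination of the full isometry group of the symmetric space $(G,g_0)$, and a dimension count sees only the identity component. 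In short: your skeleton is the right one, but at every point where the theorem has actual content you either defer to the same literature the paper defers to or assert the conclusion in disguised form.
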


\begin{rem}
The above follows from more results of Onischick that generalize \cite{OchiaiTakahashi} and \cite[p. 24]{DAtriZiller} to homogeneous spaces realized as quotients of simple Lie groups. Please note that the statement found in \cite[p. 24]{DAtriZiller}, is only valid for left-invariant metrics that are \emph{not} bi-invariant. 
\end{rem}

We now introduce the following lemma concerning a joint isometry of inner-product spaces. When considering a compact simple Lie group equipped with a left-invariant metric that is not bi-invariant, it will allow us to pinpoint the subgroup $B$ identified in the previous theorem, thus computing the full isometry group. 

\begin{lem}\label{lem:JointIsometries}
Let $\Inner_0$ and $\Inner$ be inner products on a finite-dimensional vectror space $V$ with orthogonal groups $\operatorname{O}(V, \Inner_0)$ and $\operatorname{O}(V, \Inner)$, respectively. Now, let $\Omega: (V, \Inner_0) \to (V, \Inner_0)$ be the unique self-adjoint map such that $\langle \cdot , \cdot \rangle = \langle \Omega(\cdot), \cdot \rangle_0$ and let $V = \oplus_{j=1}^{m} V_{\lambda_j}$ be the $\Inner_0$-orthogonal decomposition of $V$ into $\Omega$-eigenspaces. If $T \in \operatorname{O}(V, \Inner_0)$, then $T \in \operatorname{O}(V, \Inner)$ if and only if $T$ preserves the $\Omega$-eigenspaces.
\end{lem}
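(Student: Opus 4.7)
The plan is to reduce the statement to the well-known fact that a linear map commutes with a diagonalizable self-adjoint operator if and only if it preserves each of its eigenspaces.

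First, I would reformulate the condition $T \in \operatorname{O}(V,\Inner)$ in terms of $\Omega$. Using the defining relation $\langle \cdot,\cdot\rangle = \langle \Omega(\cdot),\cdot\rangle_0$, for any $u,v \in V$ we have
\[
\langle Tu, Tv\rangle = \langle \Omega T u, Tv\rangle_0 = \langle T^{*}\Omega T u, v\rangle_0,
\]
where $T^*$ denotes the adjoint with respect to $\Inner_0$. Since $T \in \operatorname{O}(V,\Inner_0)$, we have $T^{*} = T^{-1}$, so the condition that $T$ preserves $\Inner$ becomes
\[
\langle T^{-1}\Omega T u, v\rangle_0 = \langle \Omega u, v\rangle_0 \quad \text{for all } u,v.
\]
By non-degeneracy of $\Inner_0$ this is equivalent to $T^{-1}\Omega T = \Omega$, i.e., $T$ commutes with $\Omega$.

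Next I would verify the standard equivalence that $T\Omega = \Omega T$ if and only if $T$ preserves each eigenspace $V_{\lambda_j}$. For the forward direction, if $v \in V_{\lambda_j}$ then $\Omega(Tv) = T(\Omega v) = \lambda_j Tv$, so $Tv \in V_{\lambda_j}$. For the converse, given the $\Inner_0$-orthogonal decomposition $v = \sum_j v_j$ with $v_j \in V_{\lambda_j}$, the assumption that $T$ preserves each $V_{\lambda_j}$ gives $Tv_j \in V_{\lambda_j}$, so
\[
\Omega Tv = \sum_j \lambda_j Tv_j = T\sum_j \lambda_j v_j = T\Omega v.
\]
Combining the two equivalences yields the lemma.

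I do not expect any real obstacle here; the only mild care needed is to invoke $T^{*} = T^{-1}$ at the right moment to convert the orthogonality condition for $\Inner$ into an intertwining relation for $\Omega$. Everything else is a routine application of the spectral decomposition of a self-adjoint operator.
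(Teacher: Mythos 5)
Your proof is correct, and it takes a somewhat different route from the paper's. You reformulate the condition $T \in \operatorname{O}(V,\Inner)$ (given $T \in \operatorname{O}(V,\Inner_0)$) as the operator identity $T^{-1}\Omega T = \Omega$, and then invoke the standard equivalence between commuting with a diagonalizable self-adjoint operator and preserving its eigenspaces. The paper never introduces the adjoint or the commutation relation: it works directly with pairs of eigenvectors $v_j \in V_{\lambda_j}$, $v_k \in V_{\lambda_k}$, showing in one direction that $\langle T(v_j), T(v_k)\rangle = \langle v_j, v_k\rangle$ by pulling $\Omega$ through the eigenspace-preserving $T$, and in the other that $(\lambda_j - \lambda_k)\langle v_j, T(v_k)\rangle_0 = 0$, so $T(v_k)$ is $\Inner_0$-orthogonal to $V_{\lambda_j}$ for $j \neq k$. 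The two arguments are of comparable length; yours is more modular in that it isolates the intertwining relation $T\Omega = \Omega T$ as the pivot, which makes the role of the hypothesis $T^* = T^{-1}$ transparent and would generalize immediately to non-degenerate symmetric bilinear forms, while the paper's version avoids any mention of adjoints and stays entirely at the level of the two inner products. Both are complete; there is no gap in your argument.
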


\begin{proof}
We begin by assuming $T$ is an isometry of the inner-product space $(V, \Inner_0)$ that preserves the $\Omega$-eigenspaces. Now, suppose $v_j \in V_{\lambda_j}$ and $v_{k} \in V_{\lambda_k}$ are $\Omega$-eigenvectors. Then, 
\begin{eqnarray*}
\langle T(v_j), T(v_k) \rangle &=& \langle \Omega(T(v_j)), T(v_k) \rangle_0 \\
&=& \langle \lambda_{j} T(v_j), T(v_k) \rangle_0 \\
&=& \langle T(\lambda_j v_j) , T(v_k) \rangle_0 \\
&=& \langle \lambda_j v_j, v_k \rangle_0 \\
&=& \langle \Omega(v_j), v_k \rangle_0\\
&=& \langle v_j, v_k \rangle.
\end{eqnarray*}
It follows that $T$ is an isometry of $(V, \Inner)$.

Now, assume $T$ is simultaneously an isometry of $(V, \Inner_0)$ and $(V, \Inner)$. Also, let $v_j \in V_{\lambda_j}$ and $v_k \in V_{\lambda_k}$ be $\Omega$-eigenvectors. Then, on the one hand, we have 
$$\langle v_j , T(v_k) \rangle = \langle \Omega(v_j), T(v_k) \rangle_0 = \lambda_j \langle v_j, T(v_k) \rangle_0,$$
while
\begin{eqnarray*}
\langle v_j, T(v_k) \rangle &=& \langle T^{-1}(v_j), v_k \rangle \\
&=& \langle T^{-1}(v_j), \Omega (v_k) \rangle_0 \\
&=& \lambda_k \langle T^{-1}(v_j), v_k \rangle_0\\
&=& \lambda_k \langle v_j, T(v_k) \rangle_0.
\end{eqnarray*}  
Therefore, $(\lambda_j - \lambda_k)\langle v_j, T(v_k) \rangle_0 = 0$ and, in the event that $j \neq k$, we conclude that $\langle v_j, T(v_k) \rangle_0 =0$. Consequently, $T$ is an isometry of $(V, \Inner_0)$ that preserves the $\Omega$-eigenspaces.
\end{proof}

\begin{thm}\label{thm:IsometryGroupofSimpleLieGroup2}\label{thm:IsometryGroupSimple}
Let $G$ be a compact simple Lie group. Fix a background bi-invariant metric $g_0$ on $G$ and let $\Inner_0$ denote its value at the identity element. Now, consider a left-invariant metric $g$ on $G$ and set $\Inner$ equal to the value of $g$ at the identity element. Then, the isometry group of $(G,g)$ is

$$\Isom(G,g) = \left\{\begin{array}{ll}
(L(G)\cdot \operatorname{Aut}(G)) \rtimes \langle \iota \rangle, & \mbox{if $g$ is bi-invariant,}\\
L(G) \cdot \operatorname{Aut}^{g}(G), & \mbox{if $g$ is not bi-invariant,}
\end{array}
\right.$$
where $\iota: G \to G$ is inversion and 
$\operatorname{Aut}^{g}(G)$ is the subgroup of $\operatorname{Aut}(G)$ consisting of $\alpha$ with the property that $\alpha_{*} \in \operatorname{O}(T_eG, \Inner_0)$ and $\alpha_{*}$ preserves the metric eigenspaces of $g$ with respect to $g_0$.
\end{thm}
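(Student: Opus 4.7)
The bi-invariant case is exactly Theorem~\ref{thm:IsometryGroupofSimpleLieGroup}(2), so the bulk of the work lies in the non-bi-invariant case. In this setting Theorem~\ref{thm:IsometryGroupofSimpleLieGroup}(1) already provides a decomposition $\Isom(G,g) = L(G) \cdot B$ for some subgroup $B$ of $\operatorname{Aut}(G)$, so the task reduces to identifying $B$ explicitly as $\operatorname{Aut}^{g}(G)$.

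My first step would be to show that $B = \operatorname{Aut}(G) \cap \Isom(G,g)$. One inclusion is automatic from Theorem~\ref{thm:IsometryGroupofSimpleLieGroup}(1). For the other, given $\alpha \in \operatorname{Aut}(G) \cap \Isom(G,g)$, I would write $\alpha = L_{p} \circ \beta$ with $p \in G$ and $\beta \in B$, and then evaluate at the identity: since $\alpha(e) = e$ and $\beta(e) = e$, one immediately obtains $p = e$, whence $\alpha = \beta \in B$. The question therefore becomes: which $\alpha \in \operatorname{Aut}(G)$ are isometries of $(G,g)$?

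Lemma~\ref{lem:AutoIsometry} answers this infinitesimally: $\alpha \in \operatorname{Aut}(G)$ is an isometry of $(G,g)$ if and only if $\alpha_{*}$ preserves $\Inner$. The key supplementary observation is that, because $G$ is compact and simple, $\BiInner$ is (up to a positive scalar) the negative of the Killing form and hence is preserved by every Lie algebra automorphism of $T_{e}G$. Consequently $\alpha_{*}$ automatically lies in $\operatorname{O}(T_{e}G, \BiInner)$, so the first clause in the definition of $\operatorname{Aut}^{g}(G)$ is vacuous for $\alpha \in \operatorname{Aut}(G)$. Lemma~\ref{lem:JointIsometries} then handles the remaining condition: an element of $\operatorname{O}(T_{e}G, \BiInner)$ lies in $\operatorname{O}(T_{e}G, \Inner)$ precisely when it preserves the eigenspaces of the $\BiInner$-self-adjoint operator $\Omega$ defined by $\Inner = \langle \Omega(\cdot), \cdot \rangle_0$, i.e., the metric eigenspaces of $g$ with respect to $g_{0}$. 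Chaining these equivalences yields $B = \operatorname{Aut}^{g}(G)$, as desired.

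The argument is essentially a careful packaging of Theorem~\ref{thm:IsometryGroupofSimpleLieGroup}, Lemma~\ref{lem:AutoIsometry}, and Lemma~\ref{lem:JointIsometries}, so I do not foresee a serious technical obstacle. The one subtle point---and the place where compactness and simplicity of $G$ both enter---is the automatic $\BiInner$-orthogonality of $\alpha_{*}$: this relies on the uniqueness (up to positive scaling) of a bi-invariant inner product on a compact simple Lie algebra, together with the standard fact that Lie algebra automorphisms preserve the Killing form.
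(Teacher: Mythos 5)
Your proposal is correct and follows essentially the same route as the paper, whose entire proof is the single sentence that the result is an immediate consequence of Theorem~\ref{thm:IsometryGroupofSimpleLieGroup} and Lemma~\ref{lem:JointIsometries}. You have simply supplied the details the authors leave implicit---identifying $B$ as $\operatorname{Aut}(G)\cap\Isom(G,g)$ via evaluation at the identity, and observing that $\alpha_{*}\in\operatorname{O}(T_eG,\Inner_0)$ is automatic because automorphisms preserve the Killing form and $\Inner_0$ is a negative multiple of it---all of which is accurate.
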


\begin{proof}
This is an immediate consequence of Theorem~\ref{thm:IsometryGroupofSimpleLieGroup} and Lemma~\ref{lem:JointIsometries}.
\end{proof}

We now specialize to the isometry groups of left-invariant metrics on $S^3$. In the event that $g$ is a bi-invariant metric on $S^3$, it is well-known that $\Isom(S^3,g)$ is isomorphic to $\operatorname{O}(4)$, the orthogonal group on four-dimensional Euclidean space. For a left-invariant metric on $S^3$ that is not bi-invariant, Theorem~\ref{thm:IsometryGroupofSimpleLieGroup}(2) implies $\Isom(S^3, g) \leq \rho(S^3 \times S^3) = \SO(4)$. Therefore, the isometry group of a left-invariant metric on $S^3$ contains orientation-reversing isometries if and only if the metric is bi-invariant. Now, since there are no outer automorphisms of $S^3$, the preceding theorem informs us that, in order to identify the isometry group of a left-invariant metric $g$ on $S^3$ that is generic or Berger of non-constant sectional curvature, we need to identify the subgroup of $S^3$ consisting of the elements $h$ such that $\Ad(h)$ preserves the metric eigenspaces of $g$ with respect to some background bi-invariant metric $g_0$.

\begin{nota}
For the remainder of this subsection, let $g_0$ denote the bi-invariant metric on $S^3$ induced by the Killing form and let $\Inner_0$ denote its value at $1 \in S^3$. And, for an arbitrary left-invariant metric $g$ on $S^3$ we let $\Inner$ denote its value at $1$.
\end{nota}

\begin{nota}\label{lem:OrientationReversing}
Since $h \in S^3 \mapsto \Ad(h) \in \SO(T_1S^3, \Inner_0)$ is a double cover, given a two-dimensional subspace $W \leq T_{1}S^3$ (up to multiplication by $\pm 1$) there is a unique element $q_{W} \in S^3 \backslash \{\pm 1\}$ such that $\Ad(q_W) \upharpoonright W = - I_2$ and $\Ad(q_W) \upharpoonright W^\perp$ is the identity, where $W^\perp$ is the $\Inner_0$-orthogonal complement of $W$.
\end{nota}

\begin{rem}\label{nota:qWIsometries}
The element $q_W$ defined above is an element of order four in $S^3$ with $q_W^2 = -1$ and $q_W^3 = -q_W$. Indeed, by the definition of $q_W$ we see $\Ad(q_W^2)$ is the identity. Therefore, $q_W^2$ is in the kernel of the map $x \in S^3 \mapsto C_x \in \SO(3)$, which is equal to $\{ \pm 1\}$. Since, $q_W \neq \pm 1$, we conclude $q_W^2 = -1$. 
\end{rem}
 
\begin{cor}\label{cor:qWIsometries}
Let $g \in \mathscr{R}_{\rm{left}}(S^3)$ and $W \leq T_1S^3$ be a two-dimensional subspace spanned by two orthogonal metric eigenvectors of $g$ (with respect to $g_0$). Then, $R_{q_W}, R_{-q_W} \in \Isom(S^3, g)$. In particular, we have
$$\rho(S^3 \times \{\pm q_W\}) \leq \Isom(S^3, g).$$
\end{cor}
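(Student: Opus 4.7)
The plan is to reduce to Corollary~\ref{cor:AutoIsometry}: showing that $R_{\pm q_W}$ is an isometry of $(S^3,g)$ amounts to showing that $\Inner$ is invariant under $\Ad(\pm q_W)$. Since $-1$ lies in the center of $S^3$, $\Ad(-q_W) = \Ad(q_W)$, so it suffices to handle $q_W$ and the assertion for $-q_W$ will follow for free.

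Because $g_0$ is bi-invariant, $\Ad(q_W)$ already lies in $\operatorname{O}(T_1 S^3, \Inner_0)$, so Lemma~\ref{lem:JointIsometries} converts the condition $\Ad(q_W) \in \operatorname{O}(T_1 S^3, \Inner)$ into the purely linear-algebra statement that $\Ad(q_W)$ preserves the eigenspaces of the $\Inner_0$-self-adjoint operator $\Omega$ determined by $\Inner = \langle \Omega(\cdot),\cdot\rangle_0$. Equivalently, I need to verify that $\Ad(q_W)$ commutes with $\Omega$. The hypothesis that $W$ is spanned by two $\Inner_0$-orthogonal metric eigenvectors of $g$ immediately gives $\Omega(W) \subseteq W$, and self-adjointness of $\Omega$ with respect to $\Inner_0$ then forces $\Omega(W^\perp) \subseteq W^\perp$. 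By the defining properties of $q_W$ in Notation~\ref{lem:OrientationReversing}, $\Ad(q_W)$ acts as the scalar $-I$ on $W$ and as the identity on $W^\perp$, so on each of these complementary $\Omega$-invariant subspaces it commutes with $\Omega$ tautologically.

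With $R_{q_W}, R_{-q_W} \in \Isom(S^3, g)$ established, the remaining step is bookkeeping: since $q_W^{-1} = -q_W$ (using $q_W^2 = -1$ from Remark~\ref{nota:qWIsometries}), we have $\rho(h, \pm q_W) = L_h \circ R_{(\pm q_W)^{-1}} = L_h \circ R_{\mp q_W}$, which is the composition of a left-translation (automatically an isometry of the left-invariant metric $g$) and a right-translation just shown to be an isometry. Hence $\rho(S^3 \times \{\pm q_W\}) \leq \Isom(S^3, g)$.

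There is no serious obstacle to this proof; the entire content sits in the single observation that $\Ad(q_W)$ is a scalar on each of $W$ and $W^\perp$, which is precisely what is needed to commute with $\Omega$ once the spanning hypothesis on $W$ guarantees that both subspaces are $\Omega$-invariant. The two non-trivial ingredients, Corollary~\ref{cor:AutoIsometry} and Lemma~\ref{lem:JointIsometries}, do all of the conceptual work.
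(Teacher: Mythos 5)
Your proof is correct and follows exactly the route the paper intends: the paper's own proof is the one-line remark that the corollary ``is an application of Corollary~\ref{cor:AutoIsometry} to the present situation,'' and your argument simply fills in the implicit chain through Lemma~\ref{lem:JointIsometries} (noting that $\Ad(q_W)$ is a scalar on each of the $\Omega$-invariant subspaces $W$ and $W^\perp$, hence preserves the metric eigenspaces) together with the observation $\Ad(-q_W)=\Ad(q_W)$. No gaps; the bookkeeping for $\rho(S^3\times\{\pm q_W\})$ is also handled correctly.
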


\begin{proof}
This is an application of Corollary~\ref{cor:AutoIsometry} to the present situation.
\end{proof}

We will now compute the isometry groups of left-invariant metrics on $S^3$ that are not bi-invariant. 

\begin{comp}[The isometry group of a generic left-invariant metric on $S^3$]\label{comp:GenericMetrics}
Let $g \in \mathscr{R}_{\rm{left}}(S^3)$ be a generic left-invariant metric with pairwise distinct metric eigenvalues $\eta_1^2$, $\eta_2^2$ and $\eta_3^2$ (with respect to $g_0$) and corresponding metric eigenvectors $u_1$, $u_2$ and $u_3$. Set 
$$W_1(g) = \Span \{u_2, u_3\}, W_2(g) = \Span \{u_1, u_3\} \mbox{ and } W_3(g) = \Span \{u_1, u_2\}.$$
Direct inspection reveals that $\{\pm 1\} \cup \left( \cup_{j=1}^{3} \{\pm q_{W_j(g)} \} \right)$ is the subgroup of $S^3$ consisting of precisely those elements $h \in S^3$ such that $\Ad(h)$ preserves the metric eigenspaces of $g$ with respect to $g_0$. Consequently, by Theorem~\ref{thm:IsometryGroupofSimpleLieGroup2} and the fact that all automorphisms of $S^3$ are inner, we find $\Isom(S^3,g) = \rho(S^3 \times \{\pm 1\}) \cup \left( \cup_{j=1}^{3} \rho(S^3 \times \{ \pm q_{W_j(g)} \}) \right)$.
\end{comp}

\begin{comp}[The isometry group of a Berger metric of non-constant sectional curvature]\label{comp:NCCBergerMetrics}
Let $g \in \mathscr{R}_{\rm{left}}(S^3)$ be a Berger metric of non-constant sectional curvature with metric eigenvalues $\eta_1^2 = \eta_2^2 \neq \eta_3^2$ (with respect to $g_0$) and corresponding metric eigenvectors $u_1$, $u_2$ and $u_3$. Further, let $W = \Span \{u_1, u_2 \}$ be so that $W^\perp = \Span \{u_3\}$ is the $\Inner$-orthogonal (and, hence, $\Inner_0$-orthogonal) complement of $W$.

\begin{enumerate}
\item By $S^1(g)$ we will denote the unique one-parameter subgroup of $S^3$ so that the special orthogonal group of $(W, g_e \upharpoonright W)$ is 
$$\SO(W, g_e \upharpoonright W) = \{ f_{*} : f \in \rho(\Delta S^1(g)) \} = \{\Ad(k) : k \in S^{1}(g) \}$$
and for any $k \in S^{1}(g)$, $\Ad(k) \upharpoonright W^\perp$ is the identity.
\item Let $q_g \in S^3$ be such that 
$$\Ad(q_g)\upharpoonright W \in \operatorname{O}(W, g_e \upharpoonright W) \backslash \SO(W, g_e \upharpoonright W)$$
and $\Ad(q_g)\upharpoonright W^\perp$ is the negative of the identity.
\end{enumerate}
(Please see Example~\ref{exa:Berger} below for a concrete instance of these constructions.) Now, direct inspection shows that $S^1(g) \cup q_gS^1(g)$ is the subgroup of $S^3$ consisting of precisely those elements $h \in S^3$ for which $\Ad(h)$ preserves the metric eigenspaces of $g$ (with respect to $g_0$). Therefore, by Theorem~\ref{thm:IsometryGroupofSimpleLieGroup2} and the fact that all automorphisms of $S^3$ are inner, we conclude $\Isom(S^3, g) = \rho(S^3 \times S^1(g)) \cup \rho(S^3 \times q_gS^1(g))$.
\end{comp}

\begin{rem}
Clearly, the element $g_q$ is not unique and, by the reasoning in Remark~\ref{nota:qWIsometries}, it is an order four element with $q_g^2 = -1$ and $q_g^3 = -q_g$. One can also check that $q_g$ normalizes $S^1(g)$ in $S^3$.
\end{rem}

\begin{exa}[Finding $S^1(g)$ and $q_g$ for a Berger metric]\label{exa:Berger}
Recall that $S^3 \subset \Span\{1,i,j,k \}$ is the space of unit length quaternions and $T_1S^3 = \Span\{i,j,k\}$. Now, let $g \in \mathscr{R}_{\rm{left}}(S^3)$ be a Berger metric of non-constant sectional curvature obtained through a non-trivial rescaling of $\Inner_0$ in the $i$-direction. Then, we have
\begin{itemize}
\item $S^1(g) = S^1_{\rm{std}} \equiv \{e^{i\theta} : \theta \in \R \}$, and
\item we may take $q_g = j$.
\end{itemize}
Indeed, $\Ad(j)(i) = -i$, $\Ad(j)(j) = j$ and $\Ad(j)(k) = -k$.
\end{exa}

We summarize the preceding discussion in the following proposition.

\begin{prop}\label{prop:IsometryGroupsS^3}\label{prop:IsometryGroupS^3}
The isometry group of a left-invariant metric on $S^3$ is given by

$$\Isom(S^3, g) = \left\{
\begin{array}{ll}
\operatorname{O}(4), & \mbox{if $g$ is bi-invariant},\\
\rho(S^3 \times S^1(g)) \cup \rho(S^3 \times q_g S^1(g)), &  \mbox{if $g$ is Berger of non-constant}\\ 
& \mbox{sectional curvature}, \\
\rho(S^3 \times \{\pm 1\}) \cup \left( \cup_{j=1}^{3} \rho(S^3 \times \{ \pm q_{W_j(g)} \}) \right), & \mbox{if $g$ is generic}.
\end{array}
\right.$$
In particular, $(S^3, g)$ admits orientation-reversing isometries if and only if $g$ is bi-invariant and the connected component of the identity of $\Isom(S^3,g)$ is given by 
$$\Isom(S^3, g)^o \simeq \left\{ \begin{array}{ll} 
\SO(4) = \rho(S^3 \times S^3), & \mbox{if $g$ is bi-invariant}, \\
\rho(S^3 \times S^1(g)), & \mbox{if $g$ is Berger of non-constant sectional curvature},\\
\rho(S^3 \times \{\pm 1\}), & \mbox{if $g$ has three pairwise distinct eigenvalues}.
\end{array} \right.$$
\end{prop}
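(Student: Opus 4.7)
The plan is to combine Theorem~\ref{thm:IsometryGroupofSimpleLieGroup2} with the case-by-case analyses already carried out in Computations~\ref{comp:GenericMetrics} and \ref{comp:NCCBergerMetrics}. The three alternatives in the statement partition $\mathscr{R}_{\rm{left}}(S^3)$ according to the multiplicity pattern of the metric eigenvalues of $g$ with respect to $g_0$, so the strategy is to dispatch them one at a time.

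First, any bi-invariant metric on the compact simple Lie group $S^3$ is a positive scalar multiple of $g_0$ (uniqueness up to scale of $\operatorname{Ad}$-invariant inner products on $\germ{su}(2)$), so $(S^3,g)$ is homothetic to the round sphere and $\Isom(S^3,g) \cong \operatorname{O}(4)$. Under Theorem~\ref{thm:IsometryGroupofSimpleLieGroup2}(1) the subgroup $\rho(S^3 \times S^3) = \SO(4)$ is accounted for by $L(S^3) \cdot \operatorname{Aut}(S^3)$, and the inversion $\iota$ supplies the orientation-reversing coset.

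For the two non-bi-invariant cases, Theorem~\ref{thm:IsometryGroupofSimpleLieGroup2}(2) gives $\Isom(S^3,g) = L(S^3) \cdot \operatorname{Aut}^g(S^3)$, so it suffices to identify $\operatorname{Aut}^g(S^3)$. Because $\germ{su}(2)$ has trivial outer automorphism group, every element of $\operatorname{Aut}(S^3)$ is inner, i.e.\ equals $C_h$ for some $h \in S^3$ with $(C_h)_* = \Ad(h)$; and $\Ad(h)$ automatically lies in $\operatorname{O}(T_1 S^3, \langle \cdot,\cdot\rangle_0)$ because $\langle \cdot,\cdot\rangle_0$ is $\operatorname{Ad}$-invariant. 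Hence $\operatorname{Aut}^g(S^3)$ is the image under $h \mapsto C_h$ of the subgroup of $S^3$ whose $\operatorname{Ad}$-image preserves the metric eigenspaces of $g$. In the generic case, Computation~\ref{comp:GenericMetrics} applies: any $\Ad(h) \in \SO(3)$ preserving three pairwise orthogonal lines acts as $\pm 1$ on each with overall determinant one, so its preimage in $S^3$ is the eight-element set $\{\pm 1\} \cup \bigcup_{j=1}^{3} \{\pm q_{W_j(g)}\}$. In the Berger case with decomposition $T_1 S^3 = W \oplus W^\perp$, Computation~\ref{comp:NCCBergerMetrics} applies: $\Ad(h)$ preserves this splitting, and a determinant argument forces $\Ad(h)|_W \in \SO(W)$ exactly when $\Ad(h)|_{W^\perp} = +1$, producing the circle $S^1(g)$ together with its non-identity coset $q_g S^1(g)$. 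Applying $\rho$ to the product with $L(S^3)$ then yields the displayed formulas.

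The remaining claims follow at once. Theorem~\ref{thm:IsometryGroupofSimpleLieGroup}(1) forces $\Isom(S^3,g) \subseteq \rho(S^3 \times S^3) = \SO(4)$ whenever $g$ is not bi-invariant, so only the bi-invariant metrics admit orientation-reversing isometries, and those are supplied by $\iota \notin \SO(4)$. The identity component is read directly off the explicit descriptions: $\operatorname{Aut}^g(S^3)$ is finite (hence discrete) in the generic case, has $S^1(g)$ as its identity component in the Berger case, and coincides with all of $\operatorname{Aut}(S^3)$ in the bi-invariant case, yielding the three stated descriptions of $\Isom(S^3,g)^o$. The only real obstacle is the case analysis in the two Computations, which rests on $\Ad \colon S^3 \to \SO(3)$ being a two-to-one cover together with elementary determinant bookkeeping, and is already carried out in the excerpt.
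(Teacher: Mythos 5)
Your proposal is correct and follows essentially the same route as the paper: the paper's proof is literally a summary of Theorem~\ref{thm:IsometryGroupofSimpleLieGroup2} together with Computations~\ref{comp:GenericMetrics} and \ref{comp:NCCBergerMetrics} (using that all automorphisms of $S^3$ are inner), plus the standard facts about the bi-invariant case, which is exactly what you do. The determinant bookkeeping you supply for the generic and Berger cases matches the "direct inspection" the paper performs in those Computations.
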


The following corollary sheds light on which finite subgroups of $\SO(4)$ that act freely on $S^3$ can appear as the subgroup of the isometry group of a given left-invariant metric $g$ and will be useful in our proof of Theorem~\ref{thm:LocHomogEllipticThreeManifolds}.

\begin{cor}\label{cor:DiscreteSubgroupsIsometryGroupsS^3}
Let $g$ be a left-invariant metric on $S^3$ that is not bi-invariant and $\Gamma$ a finite subgroup of $\SO(4)$ that acts freely on $S^3$. If $\Gamma$ is Type II -VI and $\Gamma \leq \Isom(S^3, g)$, then (up to conjugation in $\SO(4)$) $\Gamma$ is the subgroup of $\rho(S^3 \times S^{1}_{\rm{std}})$ given by 
\begin{enumerate}
\item $\Gamma_{q;p_1, p_2}$ for $p_1$ and $p_2$ integers relatively prime to $q$, or  
\item $\rho(\widetilde{\Delta} \times {\bf C}_q)$, where $\widetilde{\Delta}$ is ${\bf D}_k^{*}$, ${\bf T}^*$, ${\bf I}^*$, ${\bf O}^*$, $\widetilde{{\bf D}}_{2^{2k+1}n}$ or $\widetilde{{\bf T}}_{3^{k}8}$ and $\widetilde{\Delta} \times {\bf C}_q$ denotes the subgroup of $S^3 \times S^{1}_{\rm{std}}$ generated by $\widetilde{\Delta}$ and $1 \times {\bf C}_q$.
\end{enumerate}
\end{cor}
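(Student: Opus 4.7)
\bigskip

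\textbf{Proof Proposal.} The plan is to combine the explicit description of $\Isom(S^3,g)$ from Proposition~\ref{prop:IsometryGroupS^3} with a projection-and-Goursat analysis of $\rho^{-1}(\Gamma)\leq S^3\times S^3$, and then to apply the classification in Definition~\ref{dfn:GroupTypes}/Theorem~\ref{SeifertThrelfalltheorem} once the second factor has been forced into $S^1_{\mathrm{std}}$.

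First, I would invoke Proposition~\ref{prop:IsometryGroupS^3}. Since $g$ is not bi-invariant, $\Isom(S^3,g)\leq\rho(S^3\times H)$ where
\[
H=\begin{cases} S^1(g)\cup q_g S^1(g)=N_{S^3}(S^1(g)), & g \text{ is Berger of non-constant curvature},\\ \{\pm 1,\pm q_{W_1(g)},\pm q_{W_2(g)},\pm q_{W_3(g)}\}\cong Q_8, & g \text{ is generic}.\end{cases}
\]
After conjugating in $\SO(4)$, I may assume $S^1(g)=S^1_{\mathrm{std}}$ in the Berger case (with $q_g=j$ by Example~\ref{exa:Berger}) and that the $q_{W_j(g)}$ are $\pm i,\pm j,\pm k$ in the generic case. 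Writing $\Gamma^{*}:=\rho^{-1}(\Gamma)\leq S^3\times H$, I then consider the second-factor projection $B:=\pi_2(\Gamma^{*})\leq H$. The finite subgroups of $H$ are limited: in the Berger case $B$ is either cyclic (in $S^1_{\mathrm{std}}$) or binary dihedral ${\bf D}_k^{*}$; in the generic case $B$ is a cyclic subgroup of $Q_8$ (of order $1,2,$ or $4$) or the whole $Q_8={\bf D}_2^{*}$.

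The key technical step is to promote $B$ to a cyclic subgroup after a further $\SO(4)$-conjugation. Using the Type II-VI hypothesis, the abstract structure of $\Gamma$ is of the form $\widetilde{\Delta}\times{\bf C}_{q}$ where the cyclic piece is coprime to $|\widetilde{\Delta}|$. Via Goursat's lemma, $\Gamma^{*}\leq \pi_1(\Gamma^{*})\times B$ is the fiber product of surjections onto a common quotient $Q$. Comparing the lattice of normal subgroups of the two candidates for the ``non-cyclic core'' and using that $H$ itself is either essentially binary dihedral or $Q_8$, I would argue that the decomposition $\widetilde{\Delta}\times{\bf C}_{q}$ can be placed so that $\widetilde{\Delta}$ lands in the first factor: in the Berger case the cyclic part of $\widetilde{\Delta}$ large enough to not fit inside $Q_8$ (for $\widetilde{\Delta}={\bf O}^{*},{\bf I}^{*},\widetilde{{\bf T}}_{3^{k}8}$ etc.) forces $\widetilde{\Delta}\leq S^3\times\{1\}$, while for $\widetilde{\Delta}={\bf D}_k^{*}$ lying in $H$ a further $\SO(4)$-conjugation, using freedom in the first $S^3$ factor, normalizes the projection into $S^1_{\mathrm{std}}$ and transfers the remaining structure into the first factor.

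Once $B\leq S^1_{\mathrm{std}}$, we have $\Gamma^{*}\leq S^3\times S^{1}_{\mathrm{std}}$, hence $\Gamma\leq\rho(S^3\times S^1_{\mathrm{std}})$. The final step is purely algebraic: by Theorem~\ref{SeifertThrelfalltheorem} and Definition~\ref{dfn:GroupTypes}, the only Type I-VI subgroups of $\rho(S^3\times S^{1}_{\mathrm{std}})$ are the cyclic groups $\Gamma_{q;p_1,p_2}$ of option~(1) and the groups $\rho(\widetilde{\Delta}\times{\bf C}_{q})$ of option~(2); since $\Gamma$ is Type II-VI (hence non-cyclic), it must take the form of option~(2).

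The main obstacle will be the middle paragraph: showing that for every Type II-VI realization inside $\Isom(S^3,g)$ one can always conjugate in $\SO(4)$ so that the binary dihedral/binary polyhedral core lands in the first factor and the residual cyclic piece lands in $S^1_{\mathrm{std}}$. This requires carefully excluding cases where the non-cyclic structure appears intrinsically on the ``wrong'' side -- in particular, analyzing the Type III realizations with $n=2$ inside the generic isometry group $\rho(S^3\times Q_8)$ -- and leveraging Remark~\ref{rem:LeftRight} together with a componentwise analysis of how $\SO(4)$-conjugation acts on $\rho^{-1}(\Gamma)$.
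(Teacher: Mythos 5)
Your overall strategy---combine Proposition~\ref{prop:IsometryGroupS^3} with Theorem~\ref{SeifertThrelfalltheorem} and analyze the projection of $\rho^{-1}(\Gamma)$ to the second factor---is exactly what the paper's one-line proof (``direct inspection'') intends, and you have correctly located the crux. But the mechanism you propose at the crux fails. Conjugation by $\rho(a,b)\in\SO(4)$ acts componentwise on $\rho(S^3\times S^3)$, and $L=\rho(S^3\times 1)$ and $R=\rho(1\times S^3)$ are normal in $\SO(4)$ (Remark~\ref{rem:LeftRight}); in particular the isomorphism type of $\Gamma\cap R$ is an invariant of the $\SO(4)$-conjugacy class of $\Gamma$. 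So no $\SO(4)$-conjugation can ``transfer the remaining structure into the first factor'' when the non-cyclic core genuinely sits in the second factor---and it can sit there: for a Berger metric $g$ of non-constant curvature with $S^1(g)=S^{1}_{\rm{std}}$ and $q_g=j$, the full isometry group $\rho\bigl(S^3\times(S^{1}_{\rm{std}}\cup jS^{1}_{\rm{std}})\bigr)$ contains the freely acting Type~III groups $\rho(1\times{\bf D}_n^{*})$ (right translations by the binary dihedral group) for every $n$, and more generally $\rho({\bf C}_q\times{\bf D}_n^{*})$; likewise $\rho(1\times Q_8)$ sits inside $\rho(S^3\times Q_8)$ in the generic case. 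These groups have non-abelian intersection with $R$, whereas every group of shape (2) in the statement meets $R$ in a cyclic group, so they are not $\SO(4)$-conjugate to anything of shape (2). Your middle paragraph therefore cannot be completed as written.

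The repair is to tighten the hypothesis rather than to look for a cleverer conjugation. Everywhere this corollary is actually applied (Proposition~\ref{lem:Centralizers}, Theorem~\ref{thm:LocHomogEllipticThreeManifoldsV2}) the standing assumption is $\Gamma\leq\Isom(S^3,g)^o$, whose second factor is the connected group $S^1(g)$ in the Berger case and $\{\pm 1\}$ in the generic case. Under that hypothesis $\pi_2(\rho^{-1}(\Gamma))$ is automatically cyclic and contained in a conjugate of $S^{1}_{\rm{std}}$, the Goursat/fiber-product analysis in your first and last paragraphs closes without incident, and the ``wrong-side'' realizations above are excluded from the start. I recommend you prove the statement with $\Isom(S^3,g)^o$ in place of $\Isom(S^3,g)$, or else explicitly enlarge the conclusion to admit the mirror groups $\rho({\bf C}_q\times\widetilde{\Delta})$ and record that these are conjugate to groups of shape (2) only by elements of $\operatorname{O}(4)-\SO(4)$.
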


\begin{proof}
Follows from Theorem~\ref{SeifertThrelfalltheorem} and Proposition~\ref{prop:IsometryGroupsS^3} by direct inspection.
\end{proof}


\subsection{Classification of locally homogeneous elliptic three-manifolds}\label{sec:LocHomogElliptic}
The goal of this subsection is to provide a proof of Theorem~\ref{thm:LocHomogEllipticThreeManifolds}, which classifies the locally homogeneous elliptic three-manifolds and to determine when such a space is homogeneous. Since the universal cover of a complete locally homogeneous Riemannian manifold is homogeneous \cite[p. 692]{Singer} and up to isometry the homogeneous metrics on $S^3$ are the left-invariant metrics, to classify the locally homogeneous metrics on an elliptic three-manifold, for each left-invariant metric $g$ on $S^3$ we will need to identify the finite subgroups $\Gamma \leq \Isom(S^3, g) \leq O(4)$ acting freely on $S^3$ (Corollary~\ref{cor:DiscreteSubgroupsIsometryGroupsS^3}) and determine when two locally homogeneous elliptic three-manifolds are isometric. Moreover, we will need a criterion (Proposition~\ref{prop:HomogeneousCriterion}) to determine when a locally homogeneous metric is homogeneous.

In \cite{LSS}, a complete locally homogeneous metric on a manifold $M$ is called a \emph{geometric structure}. Further, a geometric structure on $M$ is said to be \emph{modeled} on the geometry $(X,G)$ or is said to be an \emph{$(X,G)$-geometric structure} if its universal Riemannian cover is isometric to $X$ equipped with a $G$-invariant metric. In the event that $(X,G)$ is a maximal geometry, the $(X,G)$-geometric structures on $M$ are called \emph{maximal geometric structures}. And, as has been noted in \cite[p. 7]{Geng}, a geometry inducing maximal geometric structures on a manifold $M$ need not be unique. We now recall the following theorem of Wolf \cite{Wolf}.  
 
\begin{thm}[cf. \cite{Wolf} Theorem 2.4.17]\label{thm:HomogeneousCriterion}
Let $(N, h)$ be a Riemannian manifold and $p: (\widetilde{N}, \widetilde{h}) \to (N, h)$ be its universal cover, with deck transformation group $\Gamma \simeq \pi_1(N)$, and let $Z_{\operatorname{Isom}(\widetilde{N}, \widetilde{h})^o}(\Gamma)^o$ denote the connected component of the centralizer of $\Gamma$ in the connected component of the isometry group of $(\widetilde{N}, \widetilde{h})$. Then, $N$ is homogeneous if and only if $Z_{\operatorname{Isom}(\widetilde{N}, \widetilde{h})^o}(\Gamma)^o$ acts transitively on $\widetilde{N}$. 
\end{thm}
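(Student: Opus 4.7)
The plan is to relate $\operatorname{Isom}(N, h)$ to a quotient of the normalizer of $\Gamma$ in $G := \operatorname{Isom}(\widetilde{N}, \widetilde{h})$ and then compare identity components. Writing $N_G(\Gamma)$ for this normalizer, the standard deck-transformation argument shows that every isometry of $(N,h)$ lifts to an element of $N_G(\Gamma)$ (well-defined up to composition with $\Gamma$), while every element of $N_G(\Gamma)$ descends to an isometry of $(N,h)$; this yields the identification $\operatorname{Isom}(N, h) \cong N_G(\Gamma)/\Gamma$. By Myers--Steenrod both sides are Lie groups, the quotient map is a covering with discrete kernel $\Gamma$, and consequently sends the identity component of $N_G(\Gamma)$ onto $\operatorname{Isom}(N,h)^o$ with discrete fiber.

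Next I would identify that identity component with $Z_{G^o}(\Gamma)^o$. Conjugation furnishes a continuous homomorphism $N_G(\Gamma) \to \operatorname{Aut}(\Gamma)$; since $\Gamma$ is countable and discrete, $\operatorname{Aut}(\Gamma)$ is totally disconnected, so the identity component of $N_G(\Gamma)$ must act trivially on $\Gamma$. As a connected subgroup containing the identity automatically lies in $G^o$, this gives
\[
(N_G(\Gamma))^o \;=\; Z_G(\Gamma)^o \;=\; Z_{G^o}(\Gamma)^o \;=:\; H.
\]
Hence $H$ surjects onto $\operatorname{Isom}(N,h)^o$, and for each $\widetilde{p}\in \widetilde{N}$ the covering $\pi : \widetilde{N} \to N$ sends the $H$-orbit of $\widetilde{p}$ onto the $\operatorname{Isom}(N,h)^o$-orbit of $\pi(\widetilde{p})$.

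The $(\Leftarrow)$ direction is then immediate: if $H$ acts transitively on $\widetilde{N}$, then surjectivity of $\pi$ forces $\operatorname{Isom}(N, h)^o$ to act transitively on $N$. For $(\Rightarrow)$ I expect the subtlety to be that a lift of an isometry carrying $p$ to $q$ may send a chosen preimage $\widetilde{p}$ to $\gamma\widetilde{q}$ for some $\gamma \in \Gamma$, and such a $\gamma$ need not lie in $H$, so transitivity does not pull back through the covering in a naive way. I would bypass this with a dimension/openness argument: for any $\widetilde{p} \in \widetilde{N}$, the differential at the identity of the $H$-orbit map $h \mapsto h\widetilde{p}$, postcomposed with $d\pi_{\widetilde{p}}$, coincides with the differential of the $\operatorname{Isom}(N,h)^o$-orbit map at $\pi(\widetilde{p})$, which is surjective onto $T_{\pi(\widetilde{p})}N$ by transitivity on $N$. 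Since $d\pi_{\widetilde{p}}$ is a linear isomorphism, the original differential is already surjective onto $T_{\widetilde{p}}\widetilde{N}$, so every $H$-orbit is open. As distinct orbits partition the connected manifold $\widetilde{N}$ into open sets, there is a single orbit, i.e., $H$ acts transitively on $\widetilde{N}$.
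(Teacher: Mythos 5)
Your argument is correct. Note, however, that the paper does not actually prove this statement: it is quoted with the citation ``cf.\ \cite{Wolf}, Theorem 2.4.17'' and used as a black box, so there is no in-paper proof to compare against. Your write-up is therefore a genuine addition: a self-contained proof of the (mildly refined) version stated here, in which the transitive group is the \emph{identity component} $Z_{\Isom(\widetilde N,\widetilde h)^o}(\Gamma)^o$ rather than the full centralizer appearing in Wolf's classical formulation. The skeleton $\Isom(N,h)\cong N_G(\Gamma)/\Gamma$, together with the observation that the identity component of the normalizer centralizes the discrete group $\Gamma$ (each map $n\mapsto n\gamma n^{-1}$ is locally constant), is exactly the standard route, and your identification $(N_G(\Gamma))^o=Z_G(\Gamma)^o=Z_{G^o}(\Gamma)^o$ is what reduces the statement to the identity components. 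The subtlety you flag in the forward direction is real --- a lift of an isometry carrying $p$ to $q$ only carries $\widetilde p$ to $\gamma\widetilde q$ for some deck transformation $\gamma$, which need not centralize $\Gamma$ --- and your openness-of-orbits argument (the orbit maps of $H$ are submersions because $d\pi_{\widetilde p}$ is an isomorphism and $H\to\Isom(N,h)^o$ is a local diffeomorphism, so the open $H$-orbits partition the connected manifold $\widetilde N$) is the right way to dispose of it; the same trick also justifies passing from transitivity of the full isometry group of $N$ to transitivity of its identity component. The only points you elide are routine: that $N_G(\Gamma)$ is closed in $G$ so that Myers--Steenrod and the open mapping theorem apply, and that the differential comparison in the last step is really an equality after composing with the Lie algebra isomorphism $d\phi_e$; neither affects the argument.
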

\noindent With this in hand, the following proposition is immediate.
\begin{prop}\label{prop:HomogeneousCriterion}
Let $(M,g)$ be a compact locally homogeneous manifold with universal Riemannian covering $(\widetilde{M}, \widetilde{g})$ and let $(\widetilde{M}, G)$ be a geometry on the universal cover $\widetilde{M}$ such that $\Isom(\widetilde{M}, \widetilde{g})^o \leq G$. If $(M,g)$ is homogeneous, then every $(\widetilde{M}, G)$-geometric structure on $M$ is homogeneous. 
\end{prop}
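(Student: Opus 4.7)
The plan is to apply Wolf's criterion (the preceding Theorem~\ref{thm:HomogeneousCriterion}) twice. First, I would extract from the homogeneity of $(M,g)$ a connected closed subgroup of $\Isom(\widetilde{M}, \widetilde{g})^o$ that centralizes the deck group and acts transitively on $\widetilde{M}$; second, I would transplant this subgroup into the isometry group of any competing $G$-invariant lift $\widetilde{g}'$ and re-apply Wolf's criterion to obtain homogeneity of $(M, g')$.

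To set the bookkeeping, let $\Gamma \leq \Diff(\widetilde{M})$ denote the deck transformation group of the covering $\widetilde{M} \to M$, so that $\Gamma \simeq \pi_1(M)$. Because the underlying smooth covering $\widetilde{M} \to M$ depends only on the topology of $M$, this same $\Gamma$ serves as the deck group of $(\widetilde{M}, \widetilde{g}') \to (M, g')$ for every Riemannian metric $g'$ on $M$, and $\Gamma$ automatically acts by isometries of any such lifted metric. By Theorem~\ref{thm:HomogeneousCriterion} applied to $(M,g)$, homogeneity supplies a connected Lie group
$$H := Z_{\Isom(\widetilde{M}, \widetilde{g})^o}(\Gamma)^o$$
that acts transitively on $\widetilde{M}$.

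Now let $g'$ be any $(\widetilde{M}, G)$-geometric structure on $M$, so that its lift $\widetilde{g}'$ is $G$-invariant, i.e., $G \leq \Isom(\widetilde{M}, \widetilde{g}')$. Combined with the standing hypothesis $\Isom(\widetilde{M}, \widetilde{g})^o \leq G$, this gives the chain of inclusions
$$H \,\leq\, \Isom(\widetilde{M}, \widetilde{g})^o \,\leq\, G \,\leq\, \Isom(\widetilde{M}, \widetilde{g}').$$
Connectedness of $H$ places it inside $\Isom(\widetilde{M}, \widetilde{g}')^o$, and the centralizing relation $[H, \Gamma] = 1$ established inside the smaller isometry group is preserved inside the larger one; hence $H \leq Z_{\Isom(\widetilde{M}, \widetilde{g}')^o}(\Gamma)^o$. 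Since $H$ already acts transitively on $\widetilde{M}$, so does this larger centralizer, and a second appeal to Theorem~\ref{thm:HomogeneousCriterion}, now for $(M, g')$, concludes that $(M, g')$ is homogeneous.

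I do not anticipate any real obstacle: with Wolf's criterion available, the content is entirely the transplantation of the transitively-acting centralizer $H$ into the isometry group of each $G$-invariant lift, using the chain of inclusions above. The only point requiring care is noting that the same abstract $\Gamma \leq \Diff(\widetilde{M})$ plays the role of the deck group for \emph{both} Riemannian coverings, which is what makes the transplantation legal.
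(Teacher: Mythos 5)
Your argument is correct and is essentially the paper's: the paper declares the proposition ``immediate'' from Wolf's criterion (Theorem~\ref{thm:HomogeneousCriterion}), and your two applications of that criterion---extracting the transitive connected centralizer $H$ from $(M,g)$ and transplanting it via $H \leq \Isom(\widetilde{M},\widetilde{g})^o \leq G \leq \Isom(\widetilde{M},\widetilde{g}')$ into the centralizer for $(M,g')$---is exactly the spelled-out version of that. Your closing remark that the same deck group $\Gamma \leq \Diff(\widetilde{M})$ serves for both coverings is indeed the one point worth making explicit, and it matches how the paper uses the proposition (e.g., comparing a left-invariant and a bi-invariant metric descending to the same quotient $\Gamma\backslash G$).
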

\noindent For our purposes, we will be interested in the following instance of the preceding proposition.
\begin{cor}
Let $G$ be a compact simple Lie group. A left-invariant metric $g$ on $G$ covers a homogeneous metric on $\Gamma \backslash G$, where $\Gamma \leq \Isom(G,g)$, only if the bi-invariant metric on $G$ induces a homogeneous metric on $\Gamma \backslash G$.
\end{cor}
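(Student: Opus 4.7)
The plan is to apply Proposition~\ref{prop:HomogeneousCriterion} directly, using the enlarged geometry coming from a bi-invariant metric $g_0$ on $G$. The engine is Theorem~\ref{thm:IsometryGroupSimple}, which gives the inclusion
\[
\Isom(G,g) \;=\; L(G)\cdot \operatorname{Aut}^{g}(G) \;\leq\; L(G)\cdot \operatorname{Aut}(G) \;\leq\; \bigl(L(G)\cdot \operatorname{Aut}(G)\bigr) \rtimes \langle \iota \rangle \;=\; \Isom(G, g_0)
\]
for any left-invariant $g$ on the compact simple Lie group $G$. In particular, if we set $\mathcal{G} := \Isom(G, g_0)$, then the connected component $\Isom(G,g)^o$ is contained in $\mathcal{G}$, so $(G, \mathcal{G})$ is a geometry of the form required by the hypothesis of Proposition~\ref{prop:HomogeneousCriterion} applied to the locally homogeneous space $(\Gamma \backslash G, g_\Gamma)$, whose universal Riemannian cover is $(G, g)$.

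Next, I would verify that the bi-invariant metric descends to $\Gamma \backslash G$ and produces a $(G, \mathcal{G})$-geometric structure there. Since the hypothesis gives $\Gamma \leq \Isom(G, g) \leq \mathcal{G} = \Isom(G, g_0)$, each element of $\Gamma$ is in particular an isometry of $(G, g_0)$; as $\Gamma$ already acts freely and properly discontinuously on $G$ (this is built into the quotient $\Gamma \backslash G$), the metric $g_0$ descends to a well-defined quotient metric $(g_0)_\Gamma$ on $\Gamma \backslash G$. The universal Riemannian cover of $(\Gamma \backslash G, (g_0)_\Gamma)$ is $(G, g_0)$, and because $g_0$ is $\mathcal{G}$-invariant, $(\Gamma \backslash G, (g_0)_\Gamma)$ is indeed a $(G, \mathcal{G})$-geometric structure on the underlying manifold $\Gamma \backslash G$.

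Finally, assuming $(\Gamma \backslash G, g_\Gamma)$ is homogeneous, Proposition~\ref{prop:HomogeneousCriterion} asserts that every $(G, \mathcal{G})$-geometric structure on $\Gamma \backslash G$ is homogeneous. Applying this to the structure constructed in the previous paragraph yields that $(\Gamma \backslash G, (g_0)_\Gamma)$ is homogeneous, which is exactly the desired conclusion. There is no substantive obstacle once Theorem~\ref{thm:IsometryGroupSimple} and Proposition~\ref{prop:HomogeneousCriterion} are in hand; the only point that needs any care is the observation that $\Isom(G,g)$ is contained in $\Isom(G, g_0)$, which is immediate from the Ochiai--Takahashi/Onishchik--style description of the isometry group of a compact simple Lie group that the authors have already refined.
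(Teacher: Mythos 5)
Your proposal is correct and follows essentially the same route as the paper: the paper's proof is exactly the observation that Theorem~\ref{thm:IsometryGroupofSimpleLieGroup2} gives $\Isom(G,g)^o \leq \Isom(G,g_0)^o$ for a bi-invariant $g_0$, followed by an appeal to Proposition~\ref{prop:HomogeneousCriterion}. You merely spell out the intermediate steps (that $\Gamma$ acts by $g_0$-isometries so $g_0$ descends, and that the quotient is a geometric structure modeled on the bi-invariant geometry), which the paper leaves implicit.
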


\begin{proof}
By Theorem~\ref{thm:IsometryGroupofSimpleLieGroup2} the connected component of the identity of an arbitrary left-invariant metric on $G$ is contained in the connected component of the identity of a bi-invariant metric on $G$. We can then apply the preceding proposition.
\end{proof}

We now specialize to the compact simple Lie group $S^3$. Given a left-invariant metric $g$ on $S^3$ and a finite subgroup $\Gamma \leq \operatorname{O}(4)$ of Type I-VI acting freely on $S^3$,  let $g_\Gamma$ denote the corresponding locally homogeneous quotient metric on $\Gamma \backslash S^3$. In the case where $\Gamma = \Gamma_{q; p_1, p_2}$, where $p_1$ and $p_2$ are integers both relatively prime to the positive integer $q$, we will also denote the induced quotient metric on $L(q;p_1,p_2) = \Gamma_{q;p_1,p_2} \backslash S^3$ by $g_{q;p_1,p_2}$. Now notice that the spherical space form $\Gamma \backslash \mathbb{S}^3$ is homogeneous if and only if (up to conjugation in $O(4)$) $\Gamma$ is trivial, $\mathbb{Z}_2$, $\Gamma_{q;1, \pm 1}$ ($q \geq 3$), binary dihedral or binary polyhedral \cite[Corollary 2.7.2]{Wolf}. This leads to the following classification of homogeneous elliptic three-manifolds, which will be used in the proof of Theorem~\ref{thm:LocHomogEllipticThreeManifolds}.

\begin{prop}\label{lem:Centralizers}
Let $H \leq \SO(4)$ be the connected component of an isometry group of a
left-invariant metric $g$ on $S^3$ (see Proposition~\ref{prop:IsometryGroupsS^3}),  ${\bf P}^* \leq S^3$ be a binary polyhedral or binary dihedral group, and $\widetilde{\Delta}\leq S^3$ a group isomorphic to ${\bf P}^* \times 1$, $\widetilde{{\bf D}}_{2^{k+1}n}$ or  $\widetilde{{\bf T}}_{3^{k}8}$. 
Finally, let $\Gamma \leq H$ be of Type I-VI and $g_\Gamma$ the corresponding locally homogeneous metric on $\Gamma \backslash S^3$.

\begin{enumerate}

\item If $H = \rho(S^3 \times S^3) = \SO(4)$; i.e., $H$ is the identity component of the isometry group of a bi-invariant metric, then 
$$Z_{H}(\Gamma)^o = \left\{\begin{array}{ll}
\rho(S^3 \times S^3) \mbox{ and } g_\Gamma \mbox{ is homogeneous,} & \mbox{ if } \Gamma = 1, or\, \Gamma=\Gamma_{2;1,1} = \Gamma_{2;1,-1}, \\
\rho(S^3 \times S^1_{\rm{std}}) \mbox{ and } g_\Gamma \mbox{ is homogeneous,} & \mbox{ if } \Gamma = \Gamma_{q; 1, -1}, \; q \geq 3, \\
\rho(S^1_{\rm{std}} \times S^3) \mbox{ and } g_\Gamma \mbox{ is homogeneous,} & \mbox{ if } \Gamma = \Gamma_{q;1,1}, \; q \geq 3, \\
\rho(S^1_{\rm{std}} \times S^1_{\rm{std}}) \mbox{ and } g_\Gamma \mbox{ is not homogeneous,} & \mbox{ if } \Gamma = \Gamma_{q;1,p}, \; q \geq 3, \; p \not\equiv \pm 1 \mod q,\\
\rho(S^3 \times 1) \mbox{ and } g_\Gamma \mbox{ is homogeneous,} & \mbox{ if } \Gamma = \rho(1 \times {\bf P}^*),\\
\rho(1 \times S^3) \mbox{ and } g_\Gamma \mbox{ is homogeneous,} & \mbox{ if } \Gamma = \rho({\bf P}^*\times 1), \\
\rho(1 \times S^1_{\rm{std}}) \mbox{ and } g_\Gamma \mbox{ is not homogeneous,} & \mbox{ if } \Gamma = \rho(\widetilde{\Delta} \times {\bf C}_q) \mbox{ for } (q, |\widetilde{\Delta}|) =1, \\
\rho(S^1_{\rm{std}} \times 1) \mbox{ and } g_\Gamma \mbox{ is not homogeneous,} & \mbox{ if } \Gamma = \rho({\bf C}_q \times \widetilde{\Delta}) \mbox{ for } (q, |\widetilde{\Delta}|) =1.
\end{array}
\right.$$

\item If $H = \rho(S^3 \times S^1)$; i.e., each conjugate of $H$ in $\SO(4)$ is the identity component of the isometry group of a Berger metric that is not bi-invariant, then 
$$Z_{H}(\Gamma)^o = \left\{\begin{array}{ll}
\rho(S^3 \times S^1_{\rm{std}}) \mbox{ and } g_\Gamma \mbox{ is homogeneous,} & \mbox{ if } \Gamma = 1, or\, \Gamma=\Gamma_{2;1,1} = \Gamma_{2;1,-1};\\
\rho(S^3 \times S^1_{\rm{std}}) \mbox{ and } g_\Gamma \mbox{ is homogeneous,} & \mbox{ if } \Gamma = \Gamma_{q; 1, -1}, \; q \geq 3;\\
\rho(S^{1}_{\rm{std}} \times S^{1}_{\rm{std}}) \mbox{ and } g_\Gamma \mbox{ is not homogeneous,} & \mbox{ if } \Gamma = \Gamma_{q;1,1}, \, q \geq 3;\\
\rho(S^1_{\rm{std}} \times S^1_{\rm{std}}) \mbox{ and } g_\Gamma \mbox{ is not homogeneous,} & \mbox{ if } \Gamma = \Gamma_{q;1,p}, \; q \geq 3, \; p \not\equiv \pm1 \mod q;\\
\rho(1 \times S^1_{\rm{std}}) \mbox{ and } g_\Gamma \mbox{ is not homogeneous,} & \mbox{ if } \Gamma = \rho(\widetilde{\Delta} \times {\bf C}_q) \mbox{ for } (q, |\widetilde{\Delta}|) =1.
\end{array}
\right.$$

\item If $H = \rho(S^3 \times 1)$; i.e., each conjugate of $H$ in $\SO(4)$ is the identity component of the isometry group of a left-invariant metric that is not a Berger metric, then 
$$Z_{H}(\Gamma)^o = \left\{\begin{array}{ll}
\rho(S^3 \times 1) \mbox{ and } g_\Gamma \mbox{ is homogeneous,} & \mbox{ if } \Gamma = 1, or\, \Gamma=\Gamma_{2;1,1} = \Gamma_{2; 1,-1};\\
\rho(S^1_{\rm{std}} \times 1) \mbox{ and } g_\Gamma \mbox{ is not homogeneous,} & \mbox{ if } \Gamma = \Gamma_{q;1,1}, \; q \geq 3;\\
\rho(1 \times 1) \mbox{ and } g_\Gamma \mbox{ is not homogeneous,} & \mbox{ if } \Gamma = \rho({\bf P}^*\times 1). 
\end{array}
\right.$$

\end{enumerate}

\end{prop}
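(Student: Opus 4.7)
The plan is to apply Theorem~\ref{thm:HomogeneousCriterion}: for each entry in the tables, the quotient metric $g_\Gamma$ is homogeneous if and only if the connected centralizer $Z_H(\Gamma)^o$ acts transitively on $S^3$. Since $\dim S^3=3$, this is equivalent to $Z_H(\Gamma)^o$ containing one of the transitive subgroups appearing in the tables, namely $\SO(4)$, $\rho(S^3\times S^1_{\rm{std}})$, $\rho(S^1_{\rm{std}}\times S^3)$, $L=\rho(S^3\times 1)$, or $R=\rho(1\times S^3)$; the one- and two-dimensional tori (and the trivial group) that also appear in the tables preserve the Hopf fibration and therefore fail to act transitively.

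The computation of each centralizer rests on the lifting principle that, because $\rho\colon S^3\times S^3\to \SO(4)$ has kernel $\{(1,1),(-1,-1)\}$, any lift $\widetilde{\Gamma}\leq S^3\times S^3$ of $\Gamma$ satisfies
\begin{equation*}
Z_{S^3\times S^3}(\widetilde{\Gamma})\;=\;Z_{S^3}(\pi_1\widetilde{\Gamma})\times Z_{S^3}(\pi_2\widetilde{\Gamma}),
\end{equation*}
where $\pi_j$ is the $j$-th projection, and then $Z_{\SO(4)}(\Gamma)=\rho(Z_{S^3\times S^3}(\widetilde{\Gamma}))$. The inputs I need are elementary: (i) $Z_{S^3}(\zeta)=S^1_{\rm{std}}$ for any $\zeta\in S^1_{\rm{std}}\setminus\{\pm 1\}$, while $Z_{S^3}(\pm 1)=S^3$; (ii) $Z_{S^3}(\widetilde{\Delta})=\{\pm 1\}$ whenever $\widetilde{\Delta}$ is a binary polyhedral or binary dihedral group, since each such group contains two non-commuting elements lying in distinct maximal tori whose intersection is the center $\{\pm 1\}$; (iii) the Goursat lifts $\widetilde{{\bf D}}_{2^{k+1}n}$ and $\widetilde{{\bf T}}_{3^k 8}$ project onto the non-abelian groups ${\bf D}^*_n$ and ${\bf T}^*$ under $\pi_1$ and onto ${\bf C}_{2^k}$ and ${\bf C}_{3^k}$ (with $k\geq 3$ and $k\geq 1$ respectively, so non-trivial) under $\pi_2$; and (iv) the redundancy $\rho(\{\pm 1\}\times X)=\rho(1\times X)$ holds for any $X\subseteq S^3$, so the sign ambiguity never doubles the final answer.

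Feeding (i)--(iv) into the three cases is then almost mechanical. For Case (1) with $H=\SO(4)$: for $\Gamma_{q;1,p}$ the generator lifts to $(e^{i\pi(1+p)/q},e^{i\pi(p-1)/q})$ and the four subcases are organized by whether each coordinate is $\pm 1$ or a non-central element of $S^1_{\rm{std}}$, which is exactly the $p\equiv\pm 1\pmod q$ dichotomy; for $\rho(1\times{\bf P}^*)$ and $\rho({\bf P}^*\times 1)$ inputs (ii) and (iv) give centralizers $L$ and $R$ respectively; and for the mixed $\rho(\widetilde{\Delta}\times{\bf C}_q)$ and $\rho({\bf C}_q\times\widetilde{\Delta})$, inputs (ii)--(iv) collapse the answer to $\rho(1\times S^1_{\rm{std}})$ or $\rho(S^1_{\rm{std}}\times 1)$. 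Cases (2) and (3) are obtained from Case (1) by intersecting the $\SO(4)$-centralizer with $H$ and taking the identity component: in Case (2), $H=\rho(S^3\times S^1_{\rm{std}})$ truncates the right factor from $S^3$ down to $S^1_{\rm{std}}$, while in Case (3), $H=L$ truncates it to $\{1\}$. Corollary~\ref{cor:DiscreteSubgroupsIsometryGroupsS^3} guarantees that only the $\Gamma$ listed in each table can actually lie inside the corresponding $H$, so no cases are missed.

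The main obstacle is purely organizational rather than conceptual: one must carefully match each row of each table against the recipe above and check that the transitivity conclusion given by Theorem~\ref{thm:HomogeneousCriterion} agrees with the ``is/is not homogeneous'' declaration. The only point requiring a moment's care is verifying, for the Type II and Type IV Goursat groups, that $\pi_2$ surjects onto a cyclic group containing a non-$\pm 1$ element so that input (iii) applies with centralizer $S^1_{\rm{std}}$ rather than $S^3$; this is immediate from the range restrictions $k\geq 3$ (Type II) and $k\geq 1$ (Type IV) built into Definition~\ref{dfn:GroupTypes}.
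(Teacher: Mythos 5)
Your proposal is correct and follows essentially the same route as the paper: Wolf's criterion (Theorem~\ref{thm:HomogeneousCriterion}) for the homogeneity column, together with computing centralizers by lifting to $S^3\times S^3$, where they split as a product of centralizers in the two factors, and then checking transitivity by dimension. The one imprecision is your claim that $Z_{\SO(4)}(\Gamma)=\rho\bigl(Z_{S^3\times S^3}(\widetilde{\Gamma})\bigr)$ for the \emph{full} centralizer---a priori only the containment $\supseteq$ is automatic, since $\rho(a,b)$ need only conjugate each element of $\widetilde{\Gamma}$ to $\pm$ itself---but the paper's auxiliary lemma shows equality does hold at the level of identity components (via a continuity-of-sign argument along a path to the identity), which is all that the proposition requires.
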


\begin{proof}
The proposition follows directly from the preceding discussion, the fact that a subgroup $K \leq S^3 \times S^3$ acts transitively on $S^3$ if and only if $\rho(K)\leq \SO(4)$ acts transitively on $S^3$, and the following lemma.
\begin{lem}
Let $\rho(H)$ be the isometry group of $(S^3, g)$, where $g$ is a left-invariant metric. Suppose that $\Gamma$ is a discrete subgroup of $\rho(H)$, then $$Z_{\rho(H)}(\Gamma)^o=\rho(Z_{H}(\widetilde{\Gamma})^o),$$ where $\widetilde{\Gamma}=\rho^{-1}(\Gamma)$.
\end{lem}
\begin{proof}
Take $h \in Z_{H}(\widetilde{\Gamma})^o$. By the definition of $\widetilde{\Gamma}$, $\rho(h)$ clearly centralizes $\Gamma$. Also, given any continuous curve $h_t$ connecting $h$ and the identity, $\rho(h_t)$ is a curve connecting $\rho(h)$ and the identity. Hence $Z_{\rho(H)}(\Gamma)^o\supset \rho(Z_{H}(\widetilde{\Gamma})^o)$.

To prove the other direction, let $\rho (h) \in Z_{\rho(H)}(\Gamma)^o$. Then there exists a curve $\rho(h)_t$ in $ Z_{\rho(H)}(\Gamma)$ connecting the identity element $e$ and $\rho(h)$. The curve $\rho(h)_t$ lifts to a curve $h_t$ starting from either $e$ or $-e$, and ends at $h$ or $-h$. By the continuity, $h_t$ must centralize $\Gamma$. Hence $h_t\in \pm Z_{H}(\widetilde{\Gamma})^o$ and $Z_{\rho(H)}(\Gamma)^o\subset \rho(\pm Z_{H}(\widetilde{\Gamma})^o)= \rho( Z_{H}(\widetilde{\Gamma})^o).$ This proves the lemma.
\end{proof}
\end{proof}

We now turn to the classification of locally homogeneous elliptic three-manifolds.

\begin{thm}\label{thm:LocHomogEllipticThreeManifoldsV2}
Let $g$ be a left-invariant metric on $S^3$ and denote its isometry class by $[g]$. Let $\Gamma \leq \Isom(S^3, g)^o \leq \SO(4)$ be a subgroup of Type I-VI acting freely on $S^3$ and let $\widetilde{\Delta}$ be equal to ${\bf P}^* \times 1$, $\widetilde{{\bf D}}_{2^{k+1}n}$ or $\widetilde{{\bf T}}_{3^{k}8}$, where ${\bf P}^* \leq S^3$ is binary dihedral or binary polyhedral. And, we take $p$ and $q$ to be positive integers.

\begin{enumerate}
\item Suppose $g$ is a bi-invariant metric and, hence, a metric of constant sectional curvature. Then, $[g]$ induces a unique isometry class $[g_\Gamma]$ of a locally homogeneous Riemannian metrics on $\Gamma \backslash S^3$. Up to scaling, $[g_\Gamma]$ is the unique isometry class of metrics of constant positive sectional curvature on $\Gamma \backslash S^3$ and  $[g_\Gamma]$ is homogeneous if and only if, up to conjugation in $\operatorname{O}(4)$, $\Gamma = \Gamma_{q;1,p}$, where $p \equiv \pm 1 \mod q$, or $\Gamma$ is binary dihedral or binary polyhedral.

\item Suppose $g$ is a Berger metric of non-constant sectional curvature. Then,
\begin{enumerate}
\item if, up to conjugation in $\operatorname{O}(4)$, $\Gamma$ is trivial, $\Gamma_{2;1,1}= \Gamma_{2;1,-1}$ or ${\bf P}^{*} \times 1$, then $[g]$ induces a unique isometry class $[g_\Gamma]$ of locally homogeneous Riemannian metrics on $\Gamma \backslash S^3$ and this class is homogeneous if and only if $\Gamma$ is trivial or $\Gamma_{2;1,1} = \Gamma_{2;1,-1}$ (i.e., $\Gamma \backslash S^3$ is $S^3$ or $\SO(3)$);

\item if, up to conjugation in $\operatorname{O}(4)$, $\Gamma = \rho(\widetilde{\Delta} \times {\bf C}_q)$, where $q \geq 2$, $(q, |\widetilde{\Delta}|)=1$  and $\widetilde{\Delta} \times \textcolor{red}{{\bf C}_q}$ is the subgroup of $S^3 \times S^{1}_{\rm{std}}$ generated by $\widetilde{\Delta}$ and $1 \times {\bf C}_q$, or $\Gamma = \rho(\widetilde{\Delta})$, where $\widetilde{\Delta} \neq {\bf P}^* \times 1$, then $[g]$ induces a unique isometry class $[g_\Gamma]$ of locally homogeneous Riemannian metrics on $\Gamma \backslash S^3$ and this class is not homogeneous;

\item if, up to conjugation in $\operatorname{O}(4)$, $\Gamma = \Gamma_{q;1,1}$ or $\Gamma_{q;1,-1}$ for $q \geq 3$, then $[g]$ induces precisely two isometry classes $[g_{\Gamma}^{+}]$ and $[g_{\Gamma}^{-}]$ of locally homogeneous Riemannian metrics on $\Gamma \backslash S^3 \simeq L(q;1,1) \simeq L(q;1,-1)$, precisely one of which is homogeneous;

\item if, up to conjugation in $\operatorname{O}(4)$, $\Gamma = \Gamma_{q;1,p}$ or $\Gamma_{q;1,-p}$, for $q \geq 3$ and $p \not\equiv \pm 1 \mod q$, then $[g]$ induces precisely two isometry classes $[g_{\Gamma}^{+}]$ and $[g_{\Gamma}^{-}]$ of locally homogeneous Riemannian metrics on $\Gamma \backslash S^3 \simeq L(q;1,p) \simeq L(q;1,-p)$, neither of which is homogeneous.

\end{enumerate}

\item Suppose $g$ is not a Berger metric (i.e., $g$ is a generic left-invariant metric). 
\begin{enumerate}

\item if, up to conjugation in $\operatorname{O}(4)$, $\Gamma = \Gamma_{q;1,1}$, then $[g]$ induces a unique isometry class $[g_\Gamma]$ of locally homogeneous Riemannian metrics on $\Gamma \backslash S^3 = L(q;1,1)$ and this class is homogeneous if and only if $q$ equals $1$ or $2$ (i.e., $\Gamma \backslash S^3$ is $S^3$ or $\SO(3)$).

\item if, up to conjugation  in $\operatorname{O}(4)$, $\Gamma$ is binary dihedral or binary polyhedral, then $[g]$ induces a unique isometry class $[g_\Gamma]$ of locally homogeneous Riemannian metrics on $\Gamma \backslash S^3$ and this class is not homogeneous.
 \end{enumerate}
\end{enumerate}
\end{thm}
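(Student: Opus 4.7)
My plan is to recast the classification as a combinatorial counting problem. For a fixed left-invariant metric $g$ on $S^3$, the isometry classes of locally homogeneous metrics on $\Gamma\backslash S^3$ covered by $[g]$ are in bijection with the $\Isom(S^3,g)$-conjugacy classes of subgroups of $\Isom(S^3,g)$ that are $\operatorname{O}(4)$-conjugate to $\Gamma$. Indeed, every such locally homogeneous metric lifts to a $\Gamma$-invariant representative $\phi^{*}g$ on $S^3$, so that $\phi\Gamma\phi^{-1}\leq\Isom(S^3,g)$; \lref{lem:Basic} then shows that two such lifts induce isometric quotients precisely when the corresponding conjugate subgroups lie in a common $\Isom(S^3,g)$-orbit. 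Since \tref{SeifertThrelfalltheorem} identifies $\Diff(S^3)$-conjugacy with $\operatorname{O}(4)$-conjugacy for Type I--VI subgroups of $\SO(4)$, this reduction is complete.

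The proof then proceeds by case analysis on the type of $g$, with \pref{prop:IsometryGroupsS^3} providing the explicit description of $\Isom(S^3,g)$. In the bi-invariant case $\Isom(S^3,g)=\operatorname{O}(4)$, so each $\operatorname{O}(4)$-conjugacy class of $\Gamma$ collapses to a single $\Isom(S^3,g)$-orbit, yielding a unique isometry class. In the Berger non-constant case we normalize the scaling direction to $u_3=i$, giving $\Isom(S^3,g)=\rho(S^3\times S^1(g))\cup\rho(S^3\times q_gS^1(g))$ with $S^1(g)=S^1_{\mathrm{std}}$. In the generic case, $\Isom(S^3,g)$ is the finite extension $\rho(S^3\times\{\pm 1,\pm q_{W_1(g)},\pm q_{W_2(g)},\pm q_{W_3(g)}\})$ of $L$. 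Because the hypothesis of the theorem requires $\Gamma\leq\Isom(S^3,g)^{o}$, in the generic case this forces $\Gamma\leq L$ and hence $\Gamma=\Gamma_{q;1,1}$ or $\Gamma=\rho({\bf P}^{*}\times 1)$ up to conjugation in $L$; in the Berger non-constant case \cref{cor:DiscreteSubgroupsIsometryGroupsS^3} combined with inner conjugation of a cyclic generator into the standard maximal torus places every admissible $\Gamma$ in one of the explicit standard forms $\Gamma_{q;p_1,p_2}\leq\rho(S^1_{\mathrm{std}}\times S^1_{\mathrm{std}})$ or $\rho(\widetilde{\Delta}\times{\bf C}_q)$.

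With this normalization, the $\Isom(S^3,g)$-orbit structure is computed by tracking the action of $N_{\Isom(S^3,g)}(T)/T$ on the torus $T=\rho(S^1_{\mathrm{std}}\times S^1_{\mathrm{std}})$ together with the off-identity components of $\Isom(S^3,g)$. For Type I cyclic $\Gamma_{q;1,p}$, \tref{SeifertThrelfalltheorem} enumerates the $\operatorname{O}(4)$-conjugates as $\Gamma_{q;1,p'}$ for $p'\equiv\pm p,\pm p^{-1}\!\!\pmod q$, and conjugations by elements such as $\rho(j,1),\rho(1,j)\in\Isom(S^3,g)$ collapse this list to the orbit $\{\Gamma_{q;1,p},\Gamma_{q;1,p^{-1}}\}$. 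The crucial observation is that the element $F_{2}\in\operatorname{O}(4)-\SO(4)$ of \rref{rem:TypeIConjugacy}, which realizes the $\operatorname{O}(4)$-identification $\Gamma_{q;1,p}\sim\Gamma_{q;1,-p}$, lies outside $\Isom(S^3,g)$ whenever $g$ is not bi-invariant; the absence of this identification is what produces the doubling from one to two isometry classes in cases (2)(c) and (2)(d). The Type II--VI analysis is comparatively rigid: the explicit form of these groups together with the $\Isom(S^3,g)$-conjugation action yields a unique orbit per $\operatorname{O}(4)$-conjugacy class. Homogeneity in each case is then determined by applying \pref{lem:Centralizers} to compute the connected centralizer $Z_{\Isom(S^3,g)^{o}}(\Gamma)$ and check whether it acts transitively on $S^3$.

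The main obstacle is the careful arithmetic bookkeeping for the Type I cyclic case in the Berger non-constant setting: one must verify that the $\Isom(S^3,g)$-identifications induced by $F_{1}=\rho(i,k)\in\SO(4)$ and its analogues (producing $\Gamma_{q;1,p}\sim\Gamma_{q;1,p^{-1}}$) together with the failure of the $F_{2}$-identification yield exactly the orbit counts stated in \tref{thm:LocHomogEllipticThreeManifoldsV2}(2) and (3), which requires attention to the congruence classes of $p$ relative to $\pm p^{-1}$ modulo $q$ throughout the argument.
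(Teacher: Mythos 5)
Your proposal is correct and follows essentially the same route as the paper: reduce to counting $\Isom(S^3,g)$-conjugacy classes of deck groups via Lemma~\ref{lem:Basic}, use the explicit isometry groups of Proposition~\ref{prop:IsometryGroupsS^3} together with the Seifert--Threlfall classification, and settle homogeneity with the centralizer computation of Proposition~\ref{lem:Centralizers}. The only place to tighten is the doubling step: it is not enough that $F_2\notin\Isom(S^3,g)$ --- one must note, as the paper does, that \emph{every} element of $\operatorname{O}(4)$ conjugating $\Gamma_{q;1,p}$ to $\Gamma_{q;1,-p}$ lies in the coset $F_2N_{\operatorname{O}(4)}(\Gamma_{q;1,p})\subseteq\operatorname{O}(4)-\SO(4)$, hence is orientation-reversing and so cannot be an isometry of a non-bi-invariant left-invariant metric.
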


\begin{proof} 
$(1)$ This statement is due to Wolf \cite[Theorem 7.6.6]{Wolf}. 

$(2)$ Assume that $g$ is a Berger metric of non-constant sectional curvature. We first prove 2(a). Begin by fixing $\Gamma$ to be trivial, ${\bf P}^* \times 1$, or $\Gamma_{2;1,1} = \Gamma_{2;1,-1} = \langle -I_4 \rangle$. Then, for any left-invariant metric $h \in [g]$ we see that $\Gamma \leq \Isom(S^3, h)^o = \rho(S^3 \times S^{1}(h))$, where $S^{1}(h) \leq S^3$ is the appropriate one-parameter subgroup (see Computation~\ref{comp:NCCBergerMetrics}). Therefore, each left-invariant metric $h \in [g]$ induces a locally homogeneous metric $h_\Gamma$ on $\Gamma \backslash S^3$ and by Lemma~\ref{lem:Centralizers}(2) this metric cannot be homogeneous. We claim that different left-invariant metrics in $[g]$ give isometric metrics on $\Gamma \backslash S^3$. To prove the claim, let $h, h' \in [g]$, Then there exists $x \in S^3$ such that $C_x: (S^3, h) \to (S^3, h')$ is an isometry. Now let $F = L_{x^{-1}} \circ C_{x}: (S^3, h) \to (S^3, h')$. Then $F$ is an isometry that normalizes $\Gamma$: $\Gamma^{F} = \Gamma$. By Lemma~\ref{lem:Basic}, $F$ induces an isometry between $(\Gamma \backslash S^3, h_\Gamma)$ and $(\Gamma \backslash S^3, h_{\Gamma}')$, which proves the claim.

Now, we show that for any $\Gamma'$ conjugate to $\Gamma$, the Riemannian manifold $(\Gamma' \backslash S^3, h_{\Gamma'})$ is isometric to $(\Gamma \backslash S^3, h_{\Gamma})$ for all $h\in [g]$. Suppose that $\Gamma'$ is conjugate to $\Gamma$ in $\operatorname{O}(4)$ and that $\Gamma'$ is a subgroup of $\rho(S^3\times S^1(h))$ for some $h\in [g]$, then by Theorem \ref{SeifertThrelfalltheorem}, $\Gamma=\Gamma'^{\rho(a, b)}$ for some $(a, b)\in S^3\times S^3$. Therefore, 
$\Gamma$ is a subgroup of $\rho(S^3\times S^1(h)^{(a, b)})=\rho(S^3\times S^1(h_1))$ for some $h_1 \in [g]$. As $h$ and $h_1$ are in $[g]$, the previous paragraph and Lemma \ref{lem:Basic} imply that $(\Gamma' \backslash S^3, h_{\Gamma'})$ is isometric to $(\Gamma \backslash S^3, h_{\Gamma})$, which completes the proof of  2(a).

We now move to the proof of 2(b). Suppose $\Gamma \leq \SO(4)$ is of Type II-VI but not binary dihedral or binary polyhedral. Then, it follows from Proposition~\ref{prop:IsometryGroupsS^3} that there is a unique $h \in [g]$ such that $\Gamma \leq \Isom(S^3, h)^0 = \rho(S^3 \times S^{1}(h))$. The metric $h$ induces a locally homogeneous metric $h_\Gamma$  on $\Gamma \backslash S^3$ which cannot be homogeneous by Lemma~\ref{lem:Centralizers}(2). Again, by Lemma \ref{lem:Basic}, if $\Gamma \leq \rho(S^3\times S^1(h))$ and $\Gamma'\leq \rho(S^3\times S^1(h'))$ are conjugate in $\operatorname{O}(4)$, $(\Gamma \backslash S^3, h_\Gamma)$ and $(\Gamma' \backslash S^3, h_{\Gamma'}')$ must be isometric. This establishes statement 2(b).

By Lemma~\ref{lem:Basic}, to prove 2(c), it suffices to consider $\Gamma = \Gamma_{q;1,p}$ where $q \geq 3$. Now, let $h \in [g]$ be such that $\Isom(S^3, h)^o = \rho(S^3 \times S^{1}_{\rm{std}})$, then $h$ is the unique left-invariant metric in $[g]$ such that $\Gamma_{q;1,p}, \Gamma_{q;1,-p} \leq \Isom(S^3,h)^o$. Then, $h$ induces the locally homogeneous metrics $h_{q;1,p}$ and $h_{q;1,-p}$ on $\Gamma \backslash S^3 \simeq L(q;1,p) \simeq L(q;1,-p)$ (see p.~\pageref{text:LensSpace} for notation). Now, recall that $\Gamma_{q;1,p} = \Gamma_{q;1, p^{-1}}^{F_1}$, where
$$F_1 =  \left(\begin{array}{cc}0 & I_2 \\I_2 & 0\end{array}\right) \in \SO(4),$$
(see Remark~\ref{rem:TypeIConjugacy}). Noticing that $F_1 \in N_{\operatorname{Diff}(S^3)}(\Isom(S^3, h)^o)$ we conclude from Lemma~\ref{lem:Basic} that 
$$\overline{F}_1^{*}( \mathscr{R}_{\rm{L-loc}}^{\Gamma_{q;1,p^{-1}}}(L(q;1, p^{-1})) ) = \mathscr{R}_{\rm{L-loc}}^{\Gamma_{q;1, p}}(L(q;1, p))$$
and 
$$\overline{F}_1^{*}( \mathscr{R}_{\rm{L-loc}}^{\Gamma_{q;1,-p^{-1}}}(L(q;1, -p^{-1})) ) = \mathscr{R}_{\rm{L-loc}}^{\Gamma_{q;1, -p}}(L(q;1, -p)).$$ 
Now, imagine there is an isometry $\Psi : (L(q;1,p), h_{q;1,p}) \to (L(q;1,-p), h_{q;1,-p})$, then we obtain an isometry $\widetilde{\Psi} \in  \Isom(S^3, h)$ such that $\pi_{q;1,-p} \circ \widetilde{\Psi} = \Psi \circ \pi_{q;1,p}$; where, for any positive integer $c$, $\pi_{q;1,p}: S^3 \to L(q;1,c) = \Gamma_{q;1,c} \backslash S^3$ is the natural projection map.
Then, $\widetilde{\Psi} \in \Isom(S^3, h) \leq \operatorname{O}(4)$ and $\Gamma_{q;1, p}^{\widetilde{\Psi}} = \Gamma_{q;1,-p}$. Therefore, $\widetilde{\Psi} \in F_2 N_{\operatorname{O}(4)}(\Gamma_{q;1,p})$, where 
$$F_2 = I_2 \oplus \left(\begin{array}{cc}0 & -1 \\ -1 & 0\end{array}\right) \in \operatorname{O}(4) - \SO(4),$$
(see Remark~\ref{rem:TypeIConjugacy}). Noticing that $N_{\operatorname{O}(4)}(\Gamma_{q;1,p}) = N_{\SO(4)}(\Gamma_{q;1,p})$, we conclude that $\widetilde{\Psi} = F_2 A$, where $A \in \SO(4)$. Therefore, conjugating $\Isom(S^3, h)^o = \rho(S^3 \times S^{1}_{\rm{std}})$ by $\widetilde{\Psi}$ we obtain $\rho(S^{1}_{\rm{std}} \times S^3)$ (see Remark~\ref{rem:LeftRight}) contradicting the fact that $\widetilde{\Psi}$ is an isometry of $(S^3, h)$. Consequently, $(L(q;1,p), h_{q;1,p})$ and $(L(q;1,-p), h_{q;1,-p})$ are locally isometric, yet non-isometric. 

Now, in the event that $p \equiv \pm 1 \mod q$, Lemma~\ref{lem:Centralizers}(2) implies that precisely one of the metrics $h_{q;1,p}$ and $h_{q;1,-p}$ is homogeneous. Thus, establishing statement (2c). And, in the case where $p \not\equiv \pm 1 \mod q$, Lemma~\ref{lem:Centralizers}(2) shows that neither of the metrics $h_{q;1,p}$ and $h_{q;1,-p}$ is homogeneous. Thus, statement 2(d) is established.
 
We conclude with the proof of statement 3. Let $g$ be a left-invariant metric that is not naturally reductive. Then, for each left-invariant $h \in [g]$ we have $\Gamma_{q;1,1} \leq \rho(S^3 \times 1) = \Isom(S^3, h)^o$. Therefore, any $h \in [g]$ induces a locally homogeneous metric on $\Gamma \backslash S^3$ that cannot be homogeneous by Lemma~\ref{lem:Centralizers}(3). Now, for $h , h' \in [g]$ there is an $x \in S^3$ such that conjugation by $x$ is an isometry: $C_x : (S^3, h) \to (S^3, h')$. Now, let $F = L_{x^{-1}} \circ C_{x}: (S^3, h) \to (S^{3}, h')$. Then $F$ is an isometry such that $\Gamma^{F} = \Gamma$. By Lemma~\ref{lem:Basic}, $F$ induces an isometry between $(\Gamma \backslash S^3, h_\Gamma)$ and $(\Gamma \backslash S^3, h_{\Gamma}')$. This establishes statement 3(a). The proof of 3(b) is analogous to the proof of 2(a).
\end{proof}

\begin{proof}[Proof of Theorem~\ref{thm:LocHomogEllipticThreeManifolds}]
This is merely a reformulation of Theorem~\ref{thm:LocHomogEllipticThreeManifoldsV2}.
\end{proof}

\section{Classification of the compact homogeneous three-manifolds} \label{sec:CptHomogeneous}
In this section we will prove Theorem~\ref{thm:HomogeneousThreeManifolds} which classifies the compact homogeneous Riemannian three-manifolds. 

\begin{lem}\label{lem:HomogeneousQuotients}
Let $(\widetilde{M}, \widetilde{g})$ be a simply-connected homogeneous space and $\operatorname{Isom}(\widetilde{M}, \widetilde{g})^o$ the identity component of its isometry group.
$(\widetilde{M}, \widetilde{g})$ admits a compact quotient that is homogeneous if and only if there is a discrete abelian subgroup $\Gamma \leq \operatorname{Isom}(\widetilde{M}, \widetilde{g})^o$ acting freely and properly discontinuously on $\widetilde{M}$ such that the corresponding Riemannian quotient $(\widetilde{M}/\Gamma, \widetilde{g}_{\Gamma})$ is compact and homogeneous. 
\end{lem}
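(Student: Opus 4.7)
\emph{Proof plan.} The ``if'' direction is immediate: the specified $\Gamma$ itself witnesses a compact homogeneous quotient of $(\widetilde M,\widetilde g)$. For the ``only if'' direction, the plan is to combine Sekigawa's classification of simply-connected homogeneous three-manifolds admitting compact quotients (stated on p.~\pageref{thm:SimplyConnectedLocallyHomogeneous}) with Wolf's criterion for homogeneity of quotients (Theorem~\ref{thm:HomogeneousCriterion}) in a case-by-case fashion.

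First I fix a compact homogeneous quotient $(N,h)=(\widetilde M/\Lambda,\widetilde g_\Lambda)$ and set $G:=\Isom(\widetilde M,\widetilde g)^o$. Wolf's criterion forces the connected centralizer $Z:=Z_G(\Lambda)^o$ to act transitively on $\widetilde M$. Next I run through the eight families (H1)--(H8) and eliminate (H3), (H5), (H6), (H7), and (H8). In each of these cases $G$ is explicitly known and it is standard that any cocompact discrete subgroup $\Lambda\leq\Isom(\widetilde M,\widetilde g)$ has centralizer in $G$ of positive codimension: for (H3), $\operatorname{PSL}_2(\C)$ is simple and cocompact lattices are Zariski-dense, so $Z$ is trivial; for (H5), the projection of $\Lambda$ to the $\operatorname{PSL}_2(\R)$ factor is a Fuchsian surface group whose centralizer in $\operatorname{PSL}_2(\R)$ is trivial, forcing $Z\leq\{e\}\times\R$; the arguments for (H6), (H7), (H8) are analogous, using that cocompact lattices in $\operatorname{Nil}$, $\widetilde{\SL_2(\R)}$, and $\operatorname{Sol}$ have centralizers of dimension at most two. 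In all of these cases $Z$ fails to act transitively on the three-dimensional $\widetilde M$, contradicting the hypothesis; hence no compact homogeneous quotient exists, and the ``only if'' statement is vacuously satisfied.

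It remains to exhibit an explicit abelian $\Gamma$ in the three surviving families. For (H2), the universal cover $S^3$ is itself compact and homogeneous, so $\Gamma=\{1\}$ suffices. For (H4), one has $G=\SO(3)\times\R$, and $\Gamma:=\{e\}\times\Z$ yields the compact homogeneous quotient $S^2\times S^1$. For (H1), the same centralizer argument as above rules out non-flat left-invariant metrics on $\widetilde{E(2)}$ (whose isometry groups contain no transitively acting commuting subgroup once a cocompact lattice is quotiented out), leaving only the flat case where $(\widetilde M,\widetilde g)\simeq\mathbb{E}^3$, $G=\R^3\rtimes\SO(3)$, and $\Gamma:=\Z^3\leq\R^3$ produces a flat three-torus.

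The main obstacle will be the bookkeeping required to rule out (H5)--(H8): each case demands enough structural information about $\Isom^o$ and its cocompact lattices to show that the connected centralizer is never transitive on $\widetilde M$. The borderline case (H1) also requires the observation that non-flat left-invariant metrics on $\widetilde{E(2)}$ admit no compact homogeneous quotient, but once this is noted the construction of $\Gamma=\Z^3$ in the flat case is elementary, and the constructions in (H2) and (H4) are immediate.
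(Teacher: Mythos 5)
There is a genuine gap: your argument is pinned to dimension three, but the lemma is stated for an arbitrary simply-connected homogeneous space $(\widetilde M,\widetilde g)$ with no dimension restriction, and it is the general statement that the paper needs (it is a clean structural fact, not a consequence of the eight Thurston geometries). The paper's proof is short and dimension-free, and it hinges on an idea that is absent from your proposal: a compact homogeneous quotient $(M,g)$ can be written as $G/K$ with $G$ a \emph{connected} Lie group and $K$ closed, so the homotopy exact sequence of $K\to G\to G/K$, together with the fact that connected Lie groups have abelian fundamental groups, shows that $\pi_1(M)$ has a finite-index abelian subgroup $\Lambda$. One then sets $\Gamma=\Lambda\cap\Isom(\widetilde M,\widetilde g)^o$, which still has finite index because the isometry group has finitely many components, and the quotient $\Gamma\backslash\widetilde M$ is compact (finite cover of $M$) and homogeneous by Theorem~\ref{thm:HomogeneousCriterion}, since the connected centralizer of $\Gamma$ contains that of the full deck group, which already acts transitively. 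Note also that this argument produces $\Gamma$ as a finite-index subgroup of the \emph{given} deck group, which is exactly what is invoked later in the proof of the classification of compact homogeneous three-manifolds; your version only produces \emph{some} abelian lattice unrelated to the quotient one started with.

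Even granting the restriction to dimension three, your case-by-case plan has two further problems. First, it is partially circular: the lemma is an ingredient in the paper's proof that the geometries (H3), (H5)--(H8) admit no compact homogeneous quotients and that the only homogeneous quotients in the Euclidean case are flat tori, yet your argument assumes (or must independently re-derive) precisely these facts, each of which requires nontrivial input about cocompact lattices and their centralizers (Zariski density in $\operatorname{PSL}_2(\C)$, the structure of lattices in $\operatorname{Nil}$ and $\operatorname{Sol}$, the claim about non-flat left-invariant metrics on $\widetilde{E(2)}$, etc.). Second, in the surviving cases you only exhibit one convenient abelian $\Gamma$ per geometry; for (H2) and (H4) in particular you would still need to check that the exhibited quotient is homogeneous with respect to the \emph{given} left-invariant or product metric, not just the maximally symmetric one. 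The fibration argument bypasses all of this.
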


\begin{proof}
Let $(M,g)$ be a compact homogeneous quotient of $(\widetilde{M},\widetilde{g})$.  As $M$ is homogenous, there is a connected Lie group $G$ and a closed subgroup $K$ such that $M=G/K$. As connected Lie groups have abelian fundamental groups, the homotopy long exact sequence implies that the fundamental group of $M$ has a finite index abelian subgroup isomorphic with a subgroup $\Lambda \leq \operatorname{Isom}(\widetilde{M},\widetilde{g})$ acting properly discontinuously and freely by deck-transformations on  $\widetilde{M}$. Let $\Gamma=\Lambda \cap \operatorname{Isom}(\widetilde{M},\widetilde{g})^o$.  The group $\Gamma$ has finite index in $\Lambda$ since the isometry group of a homogenous space has finitely many components.  The compact quotient $\Gamma \backslash \widetilde{M}$ is homogenous by Theorem~\ref{thm:HomogeneousCriterion}.
\end{proof}

\begin{proof}[Proof of Theorem~\ref{thm:HomogeneousThreeManifolds}] 
Let $(M,g)$ be a compact homogenous three-manifold.  Its universal Riemannian cover $(\widetilde{M},\widetilde{g})$ is homogenous by Singer \cite[p. 692]{Singer}.  By Lemma \ref{lem:HomogeneousQuotients}, the identity component of the isometry group $\operatorname{Isom}(\widetilde{M},\widetilde{g})^0$ contains a discrete abelian subgroup $\Gamma$ that acts on $\widetilde{M}$ with compact homogenous quotient.  Moreover, $(\widetilde{M}, \operatorname{Isom}(\widetilde{M},\widetilde{g}))$ is modeled on a subgeometry of one of the eight Thurston geometries (see \cite{Scott} and \cite{LSS}).    The Thurston geometries admitting compact quotients with abelian fundamental groups are precisely the geometries $(S^3, \Isom(\mathbb{S}^3)^o)$, $(\R^3, \Isom(\mathbb{E}^3)^o)$, and $(S^2 \times \R, \Isom(\mathbb{S}^2 \times \mathbb{E})^o)$  (cf. \cite[Table 1 and 2]{AFW}). 

The case where $(M,g)$ is a homogeneous three-manifold for which the locally homogeneous metrics on $M$ are modeled on subgeometries of $(S^3, \Isom(\mathbb{S}^3)^o)$ is handled by Theorem~\ref{thm:LocHomogEllipticThreeManifolds}. 

Next, assume $(M,g)$ is a homogeneous three-manifold modeled on a subgeometry of the three-dimensional euclidean geometry $(\R^3, \Isom(\mathbb{E}^3)^o)$. Then, the isometry group of the universal Riemannian cover of $(M,g)$ satisfies $\Isom(\R^3, \widetilde{g})^o \leq \Isom(\mathbb{E}^3)^o$, and we conclude that $M = \Gamma \backslash \R^3$ for some discrete subgroup $\Gamma \leq \Isom(\R^3, \widetilde{g})$ acting freely. Then, by Lemma~\ref{lem:HomogeneousQuotients}, $\Gamma$ possesses a finite index subgroup $\Lambda \leq \Isom(\R^3, \widetilde{g})^o$ such that the quotient space $(\Lambda \backslash \R^3, \widetilde{g}_{\Lambda})$ is homogeneous. Now, since $\Isom(\R^3, \widetilde{g})^o$ is contained in $\Isom(\mathbb{E}^3)^o$, Theorem~\ref{thm:HomogeneousCriterion} confirms that $\Lambda \backslash \mathbb{E}^3$ is also homogeneous. Hence, by the Bieberbach theorems, $\Lambda$ is a lattice of full rank in $\R^3$ (which is identified with the group of translations of euclidean three-space). Then, $Z_{\Isom(\mathbb{E}^3)}(\Lambda)^o = \R^3 \geq Z_{\Isom(\R^3, \widetilde{g})^o}(\Gamma)^o$; that is, $Z_{\Isom(\R^3, \widetilde{g})^o}(\Gamma)^o$ consists of translations of euclidean three-space. As $(M,g)$ is assumed to be homogeneous, Theorem~\ref{thm:HomogeneousCriterion} implies the group $Z_{\Isom(\R^3, \widetilde{g})^o}(\Gamma)^o$ must act transitively on $\R^3$. Therefore, since it consists of translations of three-space, $Z_{\Isom(\R^3, \widetilde{g})^o}(\Gamma)^o$ is identically $\R^3$. Consequently, $\widetilde{g}$ is a translation-invariant metric on $\R^3$ and, therefore, flat. Using \cite[Proposition III.6.6]{Sakai2} we conclude that the homogeneous space $(M,g)$ is a flat torus.

Finally, the compact quotients of $S^2 \times \R$ are $M_1 = S^2 \times S^1$, $M_2$ (the non-trivial $S^1$-bundle over $\mathbb{R}P^2$), $M_3 = \R P^2 \# \R P^2$, and $M_4 = \R P^2 \times S^1$. These spaces are precisely the compact three-manifolds whose geometric structures are modeled on subgeometries of $(S^2 \times \R, \Isom(\mathbb{S}^2 \times \mathbb{E})^o)$. In fact, all geometric structures on the manifolds $M_1$, $M_2$, $M_3$ and $M_4$ are maximal. Using the notation of \cite[Sec. 4]{LSS}, the locally homogeneous metrics supported by these spaces give us the following families of locally homogeneous metrics: $M_1(k,v), M_2(k,v), M_3(k,v) \mbox{ and } M_4(k,v)$, where $k, v >0$, respectively. Carrying out the computation suggested by Lemma~\ref{lem:HomogeneousQuotients}, we conclude $M_1(k,v)$, $M_2(k,v)$ and $M_4(k,v)$ are homogeneous, while $M_3(k,v)$ is merely locally homogeneous, for any $k,v >0$. We note that $M_1(k,v)$ and $M_4(k,v)$ are globally symmetric, while $M_2(k,v)$ and $M_3(k,v)$ are only locally symmetric.
 \end{proof}

\begin{rem}
As the elliptic three-manifold $\Gamma_{q; 1,1} \backslash S^3 \simeq \Gamma_{q; 1,-1} \backslash S^3$, where $q \geq 3$, admits locally isometric locally homogeneous metrics where one is homogeneous and the other is not, we see that the methods employed in the following section cannot resolve whether homogeneity is audible among compact locally homogeneous three-manifolds. 
\end{rem}

\section{On hearing the geometry of a locally homogeneous elliptic three-manifold}\label{sec:SphericalHeatInvariants}

Our objective in this section is to establish Theorem~\ref{thm:AudibillityLocalGeometry} which provides strong evidence that among locally homogeneous three-manifolds a locally homogeneous elliptic three-manifold is determined up to local isometry by its first four heat invariants. 

The first step of our argument is to observe that the first four heat invariants of a locally homogeneous three-manifold Riemannian covered by a unimodular Lie equipped with a left-invariant metric can be expressed as symmetric functions in the Christoffel symbols $\mu_1 = \Gamma_{12}^3 = -\Gamma_{21}^3$, $\mu_2 = \Gamma_{23}^{1} = -\Gamma_{32}^1$ and $\mu_3 = \Gamma_{31}^2 = -\Gamma_{13}^2$ (see Theorem~\ref{thm:HeatInvariantsSymmetricPolysChristoffel}). We then explore the degree to which this process can be inverted and learn from Lemmas~\ref{lem:SymmetricPolynomialsHeatInvariants} and \ref{lem:P1HeatInvariants} that for a locally homogeneous elliptic three-manifold $(\Gamma \backslash S^3, g)$ the elementary symmetric polynomials in $\mu_1$, $\mu_2$ and $\mu_3$ can be expressed as functions of the first four heat invariants and the order of $\Gamma$ which, by Lemma~\ref{lem:SymmetricPolynomialsUsefulFact}, establishes that the Christoffel symbols can be recovered from this data. Coupled with the fact that Equation~\ref{eq:ChristoffelToMetricEigenvalues} shows the Christoffel symbols of $(\Gamma \backslash S^3, g)$ determine its metric eigenvalues (see Section~\ref{sec:IsometryClassesOfLeftInvariantMetrics}), this shows that two locally homogeneous elliptic three-manifolds $(\Gamma_1 \backslash S^3, g_1)$ and $(\Gamma_2 \backslash S^3, g_2)$ having fundamental groups of the same order and sharing the same first four heat invariants must be locally isometric (see Theorem~\ref{thm:FundamentalGroupSameSize}). Then, by using the classification theorem of locally homogeneous three-manifolds (Theorem~\ref{thm:LocHomogEllipticThreeManifolds}), we are able to deduce Theorem~\ref{thm:IsospectralSets} and \ref{thm:A0A1A2SufficeScalarFlat}. Hence, isospectral locally non-isometric locally homogeneous elliptic three-manifolds have fundamental groups of different orders. Therefore, since the first four heat invariants can decipher whether the Ricci tensor of a locally homogeneous elliptic three-manifold is degenerate \cite[Theorem 1.1(2)]{LSS}, Lemma~\ref{lem:DegenerateRicci} implies the respective Ricci tensors of $(\Gamma_1 \backslash S^3, g_1)$ and $(\Gamma_2 \backslash S^3, g_2)$ are non-degenerate. With this in hand, the proof of Theorem~\ref{thm:AudibillityLocalGeometry} follows easily.

For the remainder of this article we will let
$$P_1(x,y,z) \equiv x+y+z, P_2(x,y,z) \equiv xy +xz +yz \mbox{ and } P_3(x,y,z) \equiv xyz$$
be the elementary symmetric polynomials in three variables. And, we recall the following useful fact.

\begin{lem}\label{lem:SymmetricPolynomialsUsefulFact}
The three-multiset $[\alpha, \beta, \gamma]$ determines and is determined by the ordered triple 
$(P_1(\alpha, \beta, \gamma), P_2(\alpha, \beta, \gamma), P_3(\alpha, \beta, \gamma)$.
\end{lem}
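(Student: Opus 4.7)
The plan is to view $\alpha$, $\beta$, $\gamma$ as the roots of a monic cubic polynomial and invoke Vieta's formulas together with unique factorization of polynomials over a field.

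First I would dispense with the forward implication, which is immediate from the definition. Each $P_j$ is a symmetric polynomial in its three arguments, so the value $P_j(\alpha,\beta,\gamma)$ is unchanged under any permutation of the inputs and therefore depends only on the multiset $[\alpha,\beta,\gamma]$ and not on a choice of ordering of its elements. This already produces a well-defined map from three-multisets to triples $(P_1,P_2,P_3) \in \R^3$.

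For the converse, the key step is to form the monic cubic
$$p(t) \,=\, (t-\alpha)(t-\beta)(t-\gamma) \,=\, t^3 \,-\, P_1(\alpha,\beta,\gamma)\,t^2 \,+\, P_2(\alpha,\beta,\gamma)\,t \,-\, P_3(\alpha,\beta,\gamma),$$
where the second equality is simply Vieta's formulas obtained by expanding the product. From this expression the coefficients of $p(t)$ are prescribed by the ordered triple $(P_1,P_2,P_3)$, so the polynomial $p$ itself is uniquely determined by that data. By unique factorization in $\R[t]$, the multiset of roots of $p$ counted with algebraic multiplicity is intrinsic to $p$, and this multiset is precisely $[\alpha,\beta,\gamma]$. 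Composing this recovery procedure with the forward map shows the two correspondences are mutually inverse.

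There is no real obstacle here; the lemma amounts to Vieta's formulas plus the uniqueness of monic factorization of a polynomial over a field. The only point requiring minor care is that one must read off the roots of $p(t)$ with algebraic multiplicity when recovering the multiset, so that repeated values (e.g., the degenerate case $\alpha=\beta$) are restored correctly. This matches the definition of a multiset in \dref{dfn:Multiset} and therefore introduces no issue.
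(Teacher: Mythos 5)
Your proof is correct and follows essentially the same route as the paper: the paper's own argument is the one-line observation that $(x+\alpha)(x+\beta)(x+\gamma) = x^3 + P_1 x^2 + P_2 x + P_3$, i.e., exactly your Vieta's-formulas construction up to the sign convention $(x+\alpha)$ versus $(t-\alpha)$. Your additional remarks about unique factorization in $\R[t]$ and counting roots with multiplicity merely make explicit what the paper leaves implicit.
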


\begin{proof}
This follows by observing that 
$$(x+\alpha)(x+\beta)(x+\gamma) = x^3 +P_1(\alpha, \beta, \gamma) x^2 + P_2(\alpha, \beta, \gamma) x + P_3(\alpha, \beta, \gamma).$$
\end{proof}

\subsection{Heat invariants as symmetric polynomials in the Christoffel symbols}
As we noted in \cite{LSS}, the heat invariants of a compact Riemannian manifold $(M,g)$ are the coefficients of the asymptotic expansion of the heat trace \cite{Mina}: 
$$\operatorname{Tr}(e^{-t\Delta_g}) = \sum_{k=0}^{\infty} e^{-t \lambda_j}\stackrel{t \searrow 0}{\sim}(4 \pi t)^{-n/2} \sum_{m = 0}^{\infty} a_m(M,g) t^m.$$ 
Two isospectral manifolds necessarily have the same heat invariants, but the converse is false. The heat invariants are known to be defined by the formula (cf. \cite[p. 145]{Berard} or \cite[Chp. VI.5]{Sakai2}): 
$$a_m(M,g) = \int_{M} u_m(M,g) \, d\nu_g,$$ 
where $u_m(M,g)$ is a universal polynomial in the components of the curvature tensor and its covariant derivatives. Unfortunately, explicit formulae are known in only a few cases.

In \cite{LSS} we saw that for a compact locally homogeneous three-manifold $(M,g)$ modeled on a simply-connected unimodular Lie group equipped with a left-invariant metric, the heat invariants $a_1(M,g)$, $a_2(M,g)$ and $a_3(M,g)$ can be expressed as symmetric polynomials in the eigenvalues of the Ricci tensor. It will be useful for us to observe here that for such manifolds, the heat invariants can also be expressed as symmetric polynomials in the Christoffel symbols with respect to a Milnor frame.
  
 \begin{lem}\label{lem:HeatInvariantsSymmetricPolys}
Let $(M,g)$ be a locally homogeneous three-manifold locally isometric to a unimodular Lie group $G$ equipped with a left-invariant metric. Let $\{E_1, E_2, E_3\}$ be a Milnor frame (see \cite[Def. 2.13]{LSS}) and let $\mu = (\mu_1, \mu_2, \mu_3)$ be the associated vector of Christoffel symbols:  
$\mu_1 = \Gamma_{12}^3 = -\Gamma_{13}^2$, $\mu_2 = \Gamma_{23}^1 = -\Gamma_{21}^3$ and $\mu_3 = \Gamma_{31}^2 = - \Gamma_{32}^1$. 

\begin{enumerate}
\item The Christoffel symbols and principal curvatures are related as follows: 

\begin{equation}\label{eqn:PrincipalCurvaturesChristoffelSymbols}
 \left(\begin{array}{c}K_{12} \\K_{13} \\K_{23}\end{array}\right) = 
\left(\begin{array}{c}R(e_1, e_2, e_1, e_2) \\ R(e_1, e_3, e_1, e_3) \\ R(e_2, e_3, e_2, e_3)\end{array}\right)
 =  \left(\begin{array}{ccc}-1 & 1 & 1 \\1 & -1 & 1 \\1 & 1 & -1\end{array}\right) \left(\begin{array}{c} \mu_1 \mu_2 \\ \mu_1 \mu_3 \\ \mu_2 \mu_3 \end{array}\right).
 \end{equation} 
 
\item And, setting $P_j \equiv P_j(\mu)$ for $j=1,2,3$, we have the folowing expressions:  
 
 \begin{equation}\label{eq:S}
\Scal=2P_2,
\end{equation}  
\begin{equation}\label{eq:NormSquaredR}
\vert R \vert^2=12P_2^2 -32 P_1P_3,
\end{equation} 
\begin{equation}\label{eq:NormSquaredRho}
\vert \Ric \vert^2= 4P_2^2 - 8P_1P_3,
\end{equation} 
\begin{equation}\label{eq:(R,R,R)}
(R,R,R)=8(P_2^3 -24P_3^2),
\end{equation} 
\begin{equation}\label{eq:(rho;R,R)}
(\Ric;R,R) = -48P_3^2 + 8P_2^3 -16P_1P_2P_3,
\end{equation} 
\begin{equation}\label{eq:(rho;rho;R)}
(\Ric;\Ric;R) = 8(P_1P_2P_3 - 6P_3^2),
\end{equation} 
\begin{equation}\label{eq:(rhorhorho)}
(\Ric \Ric \Ric)=24P_3^2 + 8P_2^3 -24P_1P_2P_3
\end{equation} 
\begin{equation}\label{eq:NormSquaredNablaR}
\vert \nabla R \vert^2 = 32(-9P_3^2 -4P_1^3P_3 + P_1^2P_2^2 + 10P_1P_2P_3 -2P_2^3),
\end{equation} 
\begin{equation}\label{eq:NormSquaredNablaRho}
\vert \nabla \Ric \vert^2= 8 \left(-9P_3^2 -4P_1^3P_3 + P_1^2P_2^2 + 10P_1P_2P_3 -2P_2^3 \right).
\end{equation} 
\end{enumerate}

\end{lem}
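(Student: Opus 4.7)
The plan is to exploit the fact that in a Milnor frame $\{E_1, E_2, E_3\}$ on a unimodular Lie group with left-invariant metric, every Christoffel symbol is constant and, by orthonormality together with the cyclic bracket relations $[E_i, E_j] \in \R E_k$, the only non-zero Christoffel symbols are $\Gamma_{ij}^k$ with $\{i,j,k\} = \{1,2,3\}$. Metric compatibility and the torsion-free condition then force these to be determined by the three parameters $\mu_1, \mu_2, \mu_3$ with the antisymmetry $\Gamma_{ij}^k = -\Gamma_{ik}^j$ asserted in the statement.

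For part (1), I would apply
\[
R(E_i,E_j)E_i \;=\; \nabla_{E_i}\nabla_{E_j}E_i - \nabla_{E_j}\nabla_{E_i}E_i - \nabla_{[E_i,E_j]}E_i.
\]
Since the $\Gamma_{ab}^c$ are constants, the derivatives $E_k(\Gamma_{ab}^c)$ vanish, and $\nabla_{E_i}\nabla_{E_j}E_i$ becomes a sum of products of two Christoffel symbols; the bracket term is linear in the structure constants and the $\mu_i$. Pairing with $E_j$ and working out each of the three cases $(i,j) \in \{(1,2),(1,3),(2,3)\}$ yields the matrix identity \eqref{eqn:PrincipalCurvaturesChristoffelSymbols}.

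For the algebraic parts of (2), I would first observe that the Ricci tensor is diagonal in the Milnor frame with $\Ric_{ii} = K_{ij}+K_{ik}$ for $\{i,j,k\}=\{1,2,3\}$: the off-diagonal ``triangle'' components $R(E_l,E_i,E_l,E_j)$ vanish by the same direct curvature computation used in part (1). Since in dimension three the full Riemann tensor is determined by the sectional curvatures, \eqref{eq:S} follows immediately from $\Scal = 2(K_{12}+K_{13}+K_{23})$ and \eqref{eqn:PrincipalCurvaturesChristoffelSymbols}. For $|R|^2 = 4\sum_{i<j} K_{ij}^2$, for $|\Ric|^2 = \sum_i(K_{ij}+K_{ik})^2$, and for each of the tensor contractions $(R,R,R)$, $(\Ric;R,R)$, $(\Ric;\Ric,R)$, $(\Ric\Ric\Ric)$, I would substitute \eqref{eqn:PrincipalCurvaturesChristoffelSymbols} and expand. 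Each resulting polynomial is symmetric in $\mu_1,\mu_2,\mu_3$ by the labelling symmetry of the frame, so Newton's identities reexpress it in terms of $P_1,P_2,P_3$, yielding \eqref{eq:NormSquaredR}--\eqref{eq:(rhorhorho)}.

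The genuine obstacle is computing $|\nabla R|^2$ and $|\nabla \Ric|^2$, which do not vanish since the manifold is generally not symmetric. I would compute
\[
(\nabla_{E_k} R)(E_a,E_b,E_c,E_d) \;=\; -R(\nabla_{E_k}E_a,E_b,E_c,E_d) - \cdots - R(E_a,E_b,E_c,\nabla_{E_k}E_d),
\]
since the scalars $R(E_a,E_b,E_c,E_d)$ are constant and hence annihilated by $E_k$. Substituting $\nabla_{E_k}E_a = \Gamma_{ka}^b E_b$ and exploiting the sparsity of the $\Gamma$'s reduces each component to a small sum of terms of the form $\mu_l K_{mn}$; squaring and summing over all index choices yields a polynomial of degree four in the $\mu_i$'s whose symmetrization must collapse to \eqref{eq:NormSquaredNablaR}. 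An attractive shortcut, available because on a locally homogeneous three-manifold the scalar curvature is constant and the Weyl tensor vanishes, is to express $R$ as a Kulkarni--Nomizu product of $\Ric$ and $g$ and derive an algebraic identity between $|\nabla R|^2$ and $|\nabla \Ric|^2$ (consistent with the ratio $32:8$ visible in the stated formulas), thereby reducing the work to a single case. In either approach, the main effort is the careful collection of terms to recognize the resulting symmetric polynomial as the right-hand sides of \eqref{eq:NormSquaredNablaR} and \eqref{eq:NormSquaredNablaRho}.
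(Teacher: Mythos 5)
Your proposal is correct, and it proves the lemma by a self-contained computation where the paper instead outsources essentially everything to Part I: the authors' entire proof consists of the observation that the Ricci eigenvalues satisfy $\nu_{\sigma(1)} = 2\mu_{\sigma(2)}\mu_{\sigma(3)}$, a citation of \cite[Cor. 2.36]{LSS} for the linear relation between principal curvatures and Ricci eigenvalues (which, after that substitution, is exactly your part (1)), and a citation of \cite[Prop. 2.23]{LSS}, where all of the contractions $|R|^2$, $|\Ric|^2$, $(R,R,R)$, $\ldots$, $|\nabla R|^2$, $|\nabla \Ric|^2$ were already expressed in terms of the Ricci eigenvalues. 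Your route re-derives that imported content from scratch, and every step goes through as you describe: the Christoffel symbols in a Milnor frame are constant with only all-distinct-index components, the direct expansion of $R(E_i,E_j)E_i$ gives the matrix identity of part (1), the Ricci tensor is diagonal with $\nu_i = K_{ij}+K_{ik} = 2\mu_j\mu_k$, and the contractions expand to symmetric polynomials in the $\mu_i$ (the passage to $P_1,P_2,P_3$ is the fundamental theorem on symmetric polynomials rather than Newton's identities, but that is cosmetic). Your shortcut for the covariant-derivative terms is a genuinely nice addition present in neither paper: in dimension three $R$ is the Kulkarni--Nomizu product of the Schouten tensor $\Ric - \tfrac{\Scal}{4}g$ with $g$; since $\Scal$ is constant the trace part is parallel and $\tr(\nabla_X \Ric) = \nabla_X \Scal = 0$, whence $|\nabla R|^2 = 4\,|\nabla \Ric|^2$ --- exactly the ratio $32:8$ in the stated formulas --- so only one of the two hard computations needs to be done by hand. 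What the paper's citation buys is brevity; what your version buys is independence from Part I together with an internal consistency check on the coefficients.
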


\begin{proof}
The first statement follows from the fact that the Christoffel symbols $\mu = (\mu_1, \mu_2, \mu_3)$ are related to the eigenvalues of the Ricci tensor $\nu = (\nu_1, \nu_2, \nu_3)$ via $\nu_{\sigma(1)} = 2\mu_{\sigma(2)}\mu_{\sigma(3)}$ for any permutation of three elements, and in \cite[Cor. 2.36]{LSS} we saw that principal curvatures and Ricci eigenvalues are related as follows:
$$\left(\begin{array}{c}K_{12} \\K_{13} \\K_{23}\end{array}\right) 
 =  \frac{1}{2}\left(\begin{array}{ccc}-1 & 1 & 1 \\1 & -1 & 1 \\1 & 1 & -1\end{array}\right) \left(\begin{array}{c} \nu_3 \\ \nu_2 \\ \nu_1 \end{array}\right).$$
The remaining expressions are then a direct application of the relation between the principal curvatures and the Christoffel symbols coupled with \cite[Prop. 2.23]{LSS}.
\end{proof}

Keeping in mind that $\nabla \Scal$ is zero for a locally homogeneous space, one easily deduces the following expressions for the heat invariants, by applying Lemma~\ref{lem:HeatInvariantsSymmetricPolys} to \cite[Equations~(2.2) - (2.5)]{LSS}. 

\begin{thm}\label{thm:HeatInvariantsSymmetricPolysChristoffel}
Let $(M,g)$ be a compact locally homogeneous three-manifold modeled on a unimodular Lie group equipped with a left-invariant metric and let $\mu = (\mu_1, \mu_2, \mu_3)$ be the vector of Christoffel symbols associated to a Milnor frame on $(M,g)$. Then, the heat invariants may be computed in terms of the Christoffel symbols as follows

\begin{equation}\label{eq:A1Christoffel}
a_1(M,g) = \frac{a_0(M,g)}{3}P_2,
\end{equation}

\begin{equation}\label{eq:A2Christoffel}
a_2(M,g) = \frac{a_0(M,g)}{360}\left( 36P_2^2 - 48 P_1P_3 \right),
\end{equation}

and

\begin{equation}\label{eq:A3Christoffel}
a_3(M,g) = \frac{a_0(M,g)}{7!} \left( -240 P_3^2 - 576P_1P_2P_3 + 184 P_2^3 + 192P_1^3 P_3 - 48 P_1^2P_2^2\right),
\end{equation}
where $P_j \equiv P_j(\mu)$ for $j =1, 2,3$.
\end{thm}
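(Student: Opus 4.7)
The plan is to reduce the three asserted identities to a direct algebraic substitution: each heat invariant $a_m(M,g)$ equals the integral over $M$ of a universal scalar polynomial $u_m$ in the curvature and its covariant derivatives, so for a locally homogeneous space the integrand is constant and $a_m(M,g)=a_0(M,g)\,u_m(M,g)$. Thus it suffices to re-express the polynomials $u_1, u_2, u_3$---which are already written in the companion paper \cite{LSS}, Equations (2.2)--(2.5), in terms of $\Scal,\,|R|^2,\,|\Ric|^2,\,(R,R,R),\,(\Ric;R,R),\,(\Ric;\Ric;R),\,(\Ric\,\Ric\,\Ric),\,|\nabla R|^2,\,|\nabla\Ric|^2$---as polynomials in $P_1,P_2,P_3$. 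All the required translations have just been assembled in Lemma~\ref{lem:HeatInvariantsSymmetricPolys}, so the proof is a bookkeeping exercise.

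Concretely, for $a_1$ I would recall that $u_1=\tfrac{1}{6}\Scal$ and apply Equation~\eqref{eq:S} to obtain $a_1=\tfrac{a_0}{6}(2P_2)=\tfrac{a_0}{3}P_2$. For $a_2$ I would plug the expressions \eqref{eq:S}, \eqref{eq:NormSquaredR}, \eqref{eq:NormSquaredRho} into the standard formula
$$u_2=\tfrac{1}{360}\bigl(5\Scal^{2}-2|\Ric|^{2}+2|R|^{2}\bigr),$$
so the cross-terms in $P_1 P_3$ combine as $16P_1P_3-64P_1P_3=-48P_1P_3$ and the $P_2^2$ terms combine as $20P_2^2-8P_2^2+24P_2^2=36P_2^2$, giving \eqref{eq:A2Christoffel}. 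For $a_3$ the corresponding universal expression from \cite{LSS} is a linear combination (with known numerical coefficients) of $\Scal^{3}$, $\Scal\,|\Ric|^{2}$, $\Scal\,|R|^{2}$, the three cubic contractions $(R,R,R)$, $(\Ric;R,R)$, $(\Ric;\Ric;R)$, $(\Ric\,\Ric\,\Ric)$, together with $|\nabla R|^{2}$, $|\nabla\Ric|^{2}$ (and, in principle, $|\nabla\Scal|^{2}=0$, which drops out since $(M,g)$ is locally homogeneous). Substituting \eqref{eq:S}--\eqref{eq:NormSquaredNablaRho}, each term becomes a polynomial in $P_1,P_2,P_3$, and collecting monomials according to their total degree in $(P_1,P_2,P_3)$ (weighted $(1,2,3)$) produces an expression homogeneous of weighted degree $6$, whose coefficients I would then match against \eqref{eq:A3Christoffel}.

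The main obstacle will be the bookkeeping for $a_3$: there are nine contributing summands and the cubic curvature contractions each produce $P_3^2$ and $P_2^3$ terms of the same type, so one must be careful with signs and with the normalisation conventions used in \cite{LSS} for the tensor contractions $(R,R,R)$ and its variants. I would organise the computation by tabulating the coefficient of each of the five monomials $P_3^{2},\ P_1P_2P_3,\ P_2^{3},\ P_1^{3}P_3,\ P_1^{2}P_2^{2}$ from each of the nine inputs, sum, and check the result equals $\tfrac{1}{7!}(-240,-576,184,192,-48)$. Once $a_3$ is verified the theorem follows.
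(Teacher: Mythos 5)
Your proposal is correct and follows exactly the route the paper takes: the paper's proof simply notes that $\nabla\Scal$ vanishes on a locally homogeneous space and then substitutes the expressions of Lemma~\ref{lem:HeatInvariantsSymmetricPolys} into the universal formulae for $u_1,u_2,u_3$ from \cite[Eqs.~(2.2)--(2.5)]{LSS}. Your explicit checks for $a_1$ and $a_2$ and the coefficient-tabulation plan for $a_3$ are just the bookkeeping the paper leaves implicit.
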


\begin{proof}
Keeping in mind that $\nabla \Scal$ is zero for a locally homogeneous space, the result follows immediately from Lemma~\ref{lem:HeatInvariantsSymmetricPolys} applied to \cite[Eqs. (2.2) - (2.5)]{LSS}.
\end{proof}

\subsection{Recovering the metric eigenvalues from the Christoffel symbols}
The Cartan-Killing form associated to $G = S^3 \simeq \SU(2)$ is the symmetric and negative-definite bilinear form $B(x,y)=\trace(\ad(x)\ad(y))$ for $x,y \in \germ{su}(2)$. Let $g_0$ denote the bi-invariant metric on $S^3$ induced by the inner product $ \Inner_0 \equiv -\frac{1}{2}B(\cdot, \cdot)$. Then, $(S^3, g_0)$ is the round sphere of radius $2$. It has volume $16\pi^2$ and constant sectional curvature $\frac{1}{4}$.

Now, fix an orientation on $\germ{su}(2)$ and let $\times$ denote the cross-product on the oriented inner product space $(\germ{su}(2), \Inner_0 )$, then for any $u,v, \in \germ{su}(2)$
$$[u,v] = u \times v.$$ 
Consider the oriented metric Lie algebra $(\germ{su}(2), \Inner)$  and let $g$ denote the corresponding left-invariant metric on $S^3$. Then, there is a self-adjoint map $\Omega : (\germ{su}(2), \Inner_0 )\rightarrow (\germ{su}(2), \Inner_0 )$ such that 
$\langle X, Y \rangle = \langle \Omega(X), Y \rangle_0$ for $X, Y \in \germ{su}(2)$.
Let $\{ e_1, e_2, e_3 \}$ be a positively oriented $\Inner_0$-orthonormal basis of $\Omega$-eigenvectors with corresponding nonzero eigenvalues $\eta_1^2$, $\eta_2^2$ and $\eta_3^2$. So, $\eta_1^2$, $\eta_2^2$ and $\eta_3^2$ are the ``metric eigenvalues'' of $\Inner$ with respect to $\Inner_0$. Then, $\{ \frac{e_1}{\eta_1}, \frac{e_2}{\eta_2}, \frac{e_3}{\eta_3} \}$ is a positively-oriented $\Inner$-orthonormal basis and 

\begin{equation}\label{eq:VolumeAndMetricEigenvalues}
\vol(g) = \eta_1 \eta_2 \eta_3 \vol(g_0) =  \eta_1 \eta_2 \eta_3 16\pi^2.
\end{equation}

\noindent
Let $\widetilde{\times}$ be the cross-product on the the oriented metric Lie algebra $(\germ{su}(2), \Inner)$ and let $L: (\germ{su}(2), \Inner) \to (\germ{su}(2), \Inner)$ be the associated ``Milnor map:'' $$[u,v] = L( u \widetilde{\times} v).$$
Then, for any cyclic permutation $ \sigma$, we find 

$$\frac{e_{\sigma(3)}}{\eta_{\sigma(1)} \eta_{\sigma(2)}} =  \frac{e_{\sigma(1)}}{\eta_{\sigma(1)}} \times \frac{e_{\sigma(2)}}{\eta_{\sigma(2)}} =  \left[\frac{e_{\sigma(1)}}{\eta_{\sigma(1)}} , \frac{e_{\sigma(2)}}{\eta_{\sigma(2)}} \right] = L \left( \frac{e_{\sigma(1)}}{\eta_{\sigma(1)}} \widetilde{\times} \frac{e_{\sigma(2)}}{\eta_{\sigma(2)}} \right) = L\left(\frac{e_{\sigma(3)}}{\eta_{\sigma(3)}} \right) = \frac{1}{\eta_{\sigma(3)}} L(e_{\sigma(3)}),
$$
and we deduce
$$L(e_1) = \frac{\eta_1^2}{\eta_1\eta_2\eta_3} e_1, \, L(e_2) = \frac{\eta_2^2}{\eta_1\eta_2\eta_3} e_2, \, \mbox{ and } L(e_3) = \frac{\eta_3^2}{\eta_1\eta_2\eta_3} e_3.$$
Therefore, $\Omega$ and $L$ have the same eigenvectors, and the eigenvalues of $\Omega$ determine and are determined by the eigenvalues of $L$.  

Letting $[e_i, e_j] = \sum_{j=1}^{3} \alpha_{ijk} e_k$ for constants $\alpha_{ijk}$, we may define 
$$c_{ijk} \equiv g([ \frac{e_i}{\eta_i} , \frac{e_j}{\eta_j}], \frac{e_k}{\eta_k}) = \alpha_{ijk}\frac{\eta_k^2}{\eta_i \eta_j \eta_k}.$$ Using Equation~\eqref{eq:VolumeAndMetricEigenvalues}, the non-zero $c_{ijk}$'s are:
$$c_{123} = - c_{213} = \eta_3^2 \frac{16\pi^2}{\vol(g)}, \;  c_{231} = - c_{321} = \eta_1^2 \frac{16\pi^2}{\vol(g)}, \;  
\mbox{ and } c_{321} = - c_{132} = \eta_2^2 \frac{16\pi^2}{\vol(g)}.$$
Applying Kozul's formula, the Christoffel symbols can be expressed as 
$$\Gamma_{ij}^{k} = \frac{1}{2}(c_{ijk} - c_{ikj} - c_{jki}),$$
which leads to the following relationship between the metric eigenvalues and the Christoffel symbols
\begin{equation}\label{eq:ChristoffelToMetricEigenvalues}
\left( \begin{array}{c} \eta_1^2 \\ \eta_2^2 \\ \eta_3^2  \end{array} \right) = \frac{\vol(g)}{16 \pi^2}\left( \begin{array}{ccc} 0 & 1 &1 \\ 1 & 0 & 1 \\ 1 & 1& 0  \end{array} \right) \left( \begin{array}{c} \mu_1 \\ \mu_2 \\ \mu_3 \end{array} \right),
\end{equation}
where as in Lemma~\ref{lem:HeatInvariantsSymmetricPolys}, $\mu_1 = \Gamma_{12}^3 = -\Gamma_{13}^2$, $\mu_2 = \Gamma_{23}^1 = -\Gamma_{21}^3$ and $\mu_3 = \Gamma_{31}^2 = -\Gamma_{32}^1$. Since the metric $g$ is determined up to (local) isometry by its 3-multiset $[\eta_1^2, \eta_2^2, \eta_3^2]$ of metric eigenvalues, we conclude $g$ is also determined up to (local) isometry by the $3$-multiset of its Christoffel symbols $[\mu_1, \mu_2, \mu_3]$. Keeping in mind that if the left-invariant metric $g$ on $S^3$ induces a locally homogeneous metric $g_\Gamma$ on the elliptic three-manifold $\Gamma \backslash S^3$, then $\vol(g) = |\Gamma| \vol(g_\Gamma)$, we see the preceding discussion establishes the following proposition. 

\begin{prop}\label{prop:DeterminedByChristoffelSymbols}
Among locally homogeneous elliptic three-manifolds, each space is determined up to local isometry by the $3$-multiset consisting of its Christoffel symbols (with respect to a Milnor Frame) and the order of its fundamental group.
\end{prop}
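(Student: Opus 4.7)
The plan is to show that the multiset $[\mu_1,\mu_2,\mu_3]$ of Christoffel symbols in a Milnor frame already determines the universal Riemannian cover of a locally homogeneous elliptic three-manifold up to isometry, and hence the manifold itself up to local isometry by Corollary~\ref{cor:MetricEigenvaluesSO(3)}. As will emerge from the argument, the order $|\Gamma|$ is in fact superfluous for the local-isometry conclusion; it is included in the statement because that same data, together with $|\Gamma|$, also recovers $\vol(g_\Gamma)$.

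First I would observe that the Christoffel symbols are a local invariant: if $g \in \mathscr{R}_{\rm{left}}(S^3)$ descends to $g_\Gamma$ on $\Gamma \backslash S^3$, then a Milnor frame for $g$ on $S^3$ pushes forward to a Milnor frame for $g_\Gamma$ under the covering projection, and the Christoffel symbols computed in the two frames coincide. Thus the data attached to the quotient $(\Gamma \backslash S^3, g_\Gamma)$ agrees with the data $[\mu_1,\mu_2,\mu_3]$ of the covering left-invariant metric $g$, and the task reduces to showing that this multiset determines $g$ up to isometry on $S^3$.

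The core step is to eliminate $\vol(g)$ from the system formed by Equation~\eqref{eq:ChristoffelToMetricEigenvalues}, namely $\eta_i^2 = \tfrac{\vol(g)}{16\pi^2}(\mu_j+\mu_k)$ for $\{i,j,k\}=\{1,2,3\}$, together with Equation~\eqref{eq:VolumeAndMetricEigenvalues}, $\vol(g) = 16\pi^2\,\eta_1\eta_2\eta_3$. Multiplying the three cyclic relations yields $(\eta_1\eta_2\eta_3)^2 = (\eta_1\eta_2\eta_3)^3 (\mu_1+\mu_2)(\mu_1+\mu_3)(\mu_2+\mu_3)$, and solving gives the closed-form expressions
\[
\eta_1\eta_2\eta_3 \;=\; \frac{1}{(\mu_1+\mu_2)(\mu_1+\mu_3)(\mu_2+\mu_3)}, \qquad \eta_i^2 \;=\; \frac{\mu_j+\mu_k}{(\mu_1+\mu_2)(\mu_1+\mu_3)(\mu_2+\mu_3)}.
\]
These exhibit the multiset $[\eta_1^2,\eta_2^2,\eta_3^2]$ of metric eigenvalues as a symmetric function of the multiset $[\mu_1,\mu_2,\mu_3]$ of Christoffel symbols, so that the latter determines the former.

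Finally I would invoke Corollary~\ref{cor:MetricEigenvaluesSO(3)}, which asserts that a locally homogeneous metric on $\Gamma \backslash S^3$ is determined up to local isometry by the multiset of metric eigenvalues of its covering left-invariant metric on $S^3$. Concatenating the three steps yields the proposition. I do not anticipate any substantive obstacle: the argument is essentially a synthesis of the algebraic identity above with Corollary~\ref{cor:MetricEigenvaluesSO(3)}. The only point requiring mild care is confirming that the denominator $(\mu_1+\mu_2)(\mu_1+\mu_3)(\mu_2+\mu_3)$ is nonzero and positive, which follows at once from the positivity of each $\eta_i^2$ and of $\vol(g)$ in Equation~\eqref{eq:ChristoffelToMetricEigenvalues}.
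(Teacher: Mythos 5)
Your proof is correct and takes essentially the same route as the paper: combine Equation~\eqref{eq:ChristoffelToMetricEigenvalues} with $\vol(g)=16\pi^2\eta_1\eta_2\eta_3$ to solve for the metric eigenvalues in terms of the Christoffel symbols, and then invoke Corollary~\ref{cor:MetricEigenvaluesSO(3)}. Your side remark that $|\Gamma|$ is superfluous for the local-isometry conclusion is consistent with the paper, which performs the same elimination immediately after the proposition and arrives at $\eta_{\sigma(1)}^2 = 1/\bigl((\mu_{\sigma(1)}+\mu_{\sigma(2)})(\mu_{\sigma(1)}+\mu_{\sigma(3)})\bigr)$, a formula involving only the $\mu_i$.
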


Applying Equations~\eqref{eq:VolumeAndMetricEigenvalues} and \eqref{eq:ChristoffelToMetricEigenvalues} we discover

\begin{equation}\label{eq:VolumeAndSymmetricPolynomials}
\frac{\vol(g)^2}{(16\pi^2)^2} = \eta_1^2 \eta_2^2\eta_3^2 =   \left(\frac{\vol(g) }{16\pi^2} \right)^3 \left( P_1(\mu)P_2(\mu) - P_3(\mu) \right).
\end{equation}

\noindent 
Now, let $g_\Gamma$ be a locally homogeneous metric on the elliptic three-manifold $\Gamma \backslash S^3$ and let $g$ be the left-invariant metric on $S^3$ covering $g_\Gamma$. Then, the volume of $g_\Gamma$ has the following expression in terms of the Christoffel symbols of $g_\Gamma$ (and $g$) with respect to a Milnor frame:
\begin{equation}\label{eq:A0Elliptic3ManifoldChristoffel}
a_0(\Gamma \backslash S^3, g_\Gamma) = \vol(g_\Gamma) = \frac{16 \pi^2}{|\Gamma| (P_1(\mu)P_2(\mu) - P_3(\mu))} = \frac{16 \pi^2}{|\Gamma|(\mu_1 + \mu_2) (\mu_1 + \mu_3) (\mu_2 + \mu_3)}.
\end{equation}
Therefore, the volume of $g$ can be expressed in terms of the Chirstoffel symbols 
\begin{equation}\label{eq:VolChristoffel}
\vol(g) = |\Gamma| \vol(g_\Gamma) =  \frac{16 \pi^2}{P_1(\mu)P_2(\mu) - P_3(\mu)} = \frac{16 \pi^2}{(\mu_1 + \mu_2) (\mu_1 + \mu_3) (\mu_2 + \mu_3)}
\end{equation} 
and we conclude that for any permutation $\sigma$
$$\eta_{\sigma(1)}^2 = \frac{1}{(\mu_{\sigma(1)} + \mu_{\sigma(2)}) (\mu_{\sigma(1)} + \mu_{\sigma(3)}) }.$$

\subsection{The proof of Theorems~\ref{thm:IsospectralSets} and \ref{thm:A0A1A2SufficeScalarFlat}}\label{subsec:ProofSpectralGeometrySphericalThreeManifolds}

\begin{lem}\label{lem:P1Positive}
Let $( \Gamma \backslash S^3, g)$ be a locally homogeneous elliptic three-manifold with vector of Christoffel symbols $\mu = (\mu_1(g), \mu_2(g), \mu_3(g))$. Then, 
$P_1(\mu)$ is positive.
\end{lem}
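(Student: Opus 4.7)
The plan is to exploit the explicit linear relationship between the Christoffel symbols and the metric eigenvalues of the covering left-invariant metric $g$ on $S^3$ that was derived in Equation~\eqref{eq:ChristoffelToMetricEigenvalues}. Since $(\Gamma\backslash S^3, g)$ is locally isometric to $(S^3, \tilde g)$ for some left-invariant metric $\tilde g$ on $S^3$ with metric eigenvalues $\eta_1^2,\eta_2^2,\eta_3^2 > 0$ (with respect to the fixed bi-invariant background metric $g_0$), and since a Milnor frame downstairs lifts to a Milnor frame on $S^3$, the three Christoffel symbols $\mu_1,\mu_2,\mu_3$ of $g$ coincide with those of $\tilde g$. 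In particular, for each cyclic permutation $\sigma$ of $\{1,2,3\}$ one has
\[
\eta_{\sigma(1)}^2 \;=\; \frac{\vol(\tilde g)}{16\pi^2}\bigl(\mu_{\sigma(2)} + \mu_{\sigma(3)}\bigr).
\]

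The actual argument is then a one-line observation. Summing the three identities above, each $\mu_i$ appears in exactly two of the right-hand sides, so
\[
\eta_1^2 + \eta_2^2 + \eta_3^2 \;=\; \frac{\vol(\tilde g)}{16\pi^2}\cdot 2(\mu_1+\mu_2+\mu_3) \;=\; \frac{\vol(\tilde g)}{8\pi^2}\,P_1(\mu).
\]
The left-hand side is strictly positive because the metric eigenvalues of any inner product are strictly positive, and $\vol(\tilde g) > 0$, so dividing yields $P_1(\mu) > 0$.

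There is essentially no obstacle: the content has already been packed into Equation~\eqref{eq:ChristoffelToMetricEigenvalues}, and the lemma is simply extracting its most elementary scalar consequence. The only thing to be mildly careful about is to note that the pairwise sums $\mu_i + \mu_j$ are individually positive as well (since each equals a positive multiple of some $\eta_k^2$), which is a stronger statement than the lemma claims but is worth recording for future use when manipulating $P_1, P_2, P_3$ and expressions like $(\mu_1+\mu_2)(\mu_1+\mu_3)(\mu_2+\mu_3)$ that appear in the volume formula~\eqref{eq:VolChristoffel}.
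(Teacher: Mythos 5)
Your proof is correct and is essentially the paper's own argument: the paper's proof consists of the single sentence that the lemma ``follows directly from Equation~\eqref{eq:ChristoffelToMetricEigenvalues} and the fact that the metric eigenvalues are all positive,'' and your summation of the three rows of that equation is exactly the computation being left implicit. Your additional observation that each pairwise sum $\mu_i+\mu_j$ is individually positive is also correct and is indeed used implicitly elsewhere in the paper (e.g., in Equation~\eqref{eq:VolChristoffel}).
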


\begin{proof}
Follows directly from Equation~\eqref{eq:ChristoffelToMetricEigenvalues} and the fact that the metric eigenvalues are all positive.
\end{proof}

\begin{lem}\label{lem:SymmetricPolynomialsHeatInvariants}
Let $( \Gamma \backslash S^3, g)$ be a locally homogeneous elliptic three-manifold with first four heat invariants $a_0 \equiv a_0(\Gamma \backslash S^3, g)$, $a_1 \equiv a_1(\Gamma \backslash S^3, g) $, $a_2 \equiv a_2(\Gamma \backslash S^3, g)$ and $a_3 \equiv a_3(\Gamma \backslash S^3, g)$. Let $A= A(a_0, a_1) \equiv \frac{3a_1}{a_0}=3\Scal$, $B= B(|\Gamma|, a_0) \equiv \frac{16\pi^2}{a_0 | \Gamma |}$, $C= C(a_0, a_1, a_2) \equiv \frac{27a_1^2-30a_0 a_2}{4a_0^2}$, and $D= D(a_0, a_3) \equiv \frac{7!a_3}{a_0}$.  Then 
\begin{enumerate}
\item $P_2=A,$
\item $P_3=AP_1-B$,
\item $P_1 P_3=C$,
\item $AP_1^2-BP_1-C=0$,
\item $(192C-288A^2)P_1^2+(480AB)P_1+(184A^3-576AC-240B^2-D)=0$.
\end{enumerate}
\end{lem}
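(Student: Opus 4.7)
The plan is to verify each identity by substituting the explicit formulas for $a_0, a_1, a_2, a_3$ from Theorem~\ref{thm:HeatInvariantsSymmetricPolysChristoffel}, together with the volume identity~\eqref{eq:A0Elliptic3ManifoldChristoffel}, into the definitions of $A$, $B$, $C$, $D$, progressively eliminating $P_2$ and $P_3$ in favor of $P_1$. No geometric input is needed beyond what those formulas already encode; the argument is a sequence of direct manipulations, followed at the end by a careful polynomial reduction.

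Identities (1)--(4) will be obtained as straightforward rearrangements. Identity (1) reads off of $a_1 = \frac{a_0}{3}P_2$, giving $P_2 = 3a_1/a_0 = A$. For (2), rewriting~\eqref{eq:A0Elliptic3ManifoldChristoffel} as $a_0|\Gamma|(P_1 P_2 - P_3) = 16\pi^2$ and substituting $P_2 = A$ yields $P_3 = AP_1 - B$. For (3), expanding $a_2 = \frac{a_0}{360}(36 P_2^2 - 48 P_1 P_3)$ and substituting $P_2 = A$ isolates $P_1 P_3$; a short algebraic simplification identifies this with $\frac{27a_1^2 - 30 a_0 a_2}{4 a_0^2} = C$. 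Identity (4) is then the immediate consequence of substituting the expression for $P_3$ from (2) into the relation $P_1 P_3 = C$ from (3).

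The one step that requires real bookkeeping is (5). Substituting $P_2 = A$ and $P_3 = A P_1 - B$ into Equation~\eqref{eq:A3Christoffel} yields $D$ as a polynomial in $P_1$ of degree four, because the term $192 P_1^3 P_3 = 192 A P_1^4 - 192 B P_1^3$ is present. The key observation is that multiplying~(4) by $192 P_1^2$ gives $192 A P_1^4 - 192 B P_1^3 = 192 C P_1^2$, which collapses the quartic and cubic contributions into a single $P_1^2$ term. After gathering terms, $D$ becomes a quadratic in $P_1$ with $P_1^2$-coefficient $192C - 864 A^2$ and $P_1$-coefficient $1056 AB$. A second application of~(4), in the form $-576 A(A P_1^2 - B P_1) = -576 AC$, converts $-576 A^2 P_1^2 + 576 AB P_1$ into the constant $-576 AC$, thereby adjusting the coefficients to the displayed $192C - 288 A^2$ and $480 AB$ and producing the constant term $184 A^3 - 576 AC - 240 B^2 - D$.

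The principal obstacle is purely algebraic: keeping track of the coefficients through the two applications of~(4) that reduce the quartic identity to a quadratic one. Nothing deeper is required, since locally homogeneous structure entered already through Theorem~\ref{thm:HeatInvariantsSymmetricPolysChristoffel} and~\eqref{eq:A0Elliptic3ManifoldChristoffel}.
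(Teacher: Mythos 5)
Your proposal is correct and follows essentially the same route as the paper: read off (1)--(4) from Equations~\eqref{eq:A1Christoffel}, \eqref{eq:A0Elliptic3ManifoldChristoffel} and \eqref{eq:A2Christoffel}, then reduce \eqref{eq:A3Christoffel} to the quadratic in (5). The only difference is bookkeeping in step (5): the paper substitutes $P_1P_3=C$ \emph{before} $P_3=AP_1-B$, which replaces $192P_1^3P_3$ by $192CP_1^2$ at once and never produces a quartic, whereas you substitute $P_3=AP_1-B$ first and then use (4) twice to collapse the degree; both computations land on the same coefficients.
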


\begin{proof} 
The first equation is derived from Equation~\eqref{eq:A1Christoffel} of Theorem~\ref{thm:HeatInvariantsSymmetricPolysChristoffel}.  Combining the first equation and Equation~\eqref{eq:A0Elliptic3ManifoldChristoffel} we derive the second expression. The third equation is derived from Equation~\eqref{eq:A2Christoffel} of Theorem~\ref{thm:HeatInvariantsSymmetricPolysChristoffel}. The fourth equation is derived by solving for $P_3$ in the third equation and then substituting into the second expression. The fifth equation is derived from Equation~\eqref{eq:A3Christoffel} of Theorem~\ref{thm:HeatInvariantsSymmetricPolysChristoffel} by first substituting using the three equalities $P_2=A$, $\frac{7!a_3}{a_0}=D$, and $P_1P_3=C$, and then substituting $P_3=AP_1-B$.
\end{proof}

\begin{cor}\label{cor:DegenerateRicci}
Let $(\Gamma \backslash S^3, g)$ be a locally homogeneous elliptic three-manifold. The metric $g$ has degenerate Ricci tensor if and only if $C$ (equivalently $P_3$) is zero.
\end{cor}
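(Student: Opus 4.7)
The plan is to combine two short observations to chain together the equivalence ``degenerate Ricci'' $\Longleftrightarrow$ $P_3(\mu)=0$ $\Longleftrightarrow$ $C=0$.

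First, I will re-express degeneracy of the Ricci tensor in terms of the Christoffel symbols with respect to a Milnor frame. Recall from the derivation in the proof of Lemma~\ref{lem:HeatInvariantsSymmetricPolys} that, on a locally homogeneous three-manifold modeled on a unimodular Lie group equipped with a left-invariant metric, the Ricci eigenvalues $\nu_1, \nu_2, \nu_3$ and the Christoffel symbols $\mu = (\mu_1,\mu_2,\mu_3)$ satisfy $\nu_{\sigma(1)} = 2\mu_{\sigma(2)}\mu_{\sigma(3)}$ for every cyclic permutation $\sigma$. Multiplying these three identities gives
$$\nu_1\nu_2\nu_3 = 8(\mu_1\mu_2\mu_3)^2 = 8\,P_3(\mu)^2,$$
so the determinant of the Ricci endomorphism vanishes precisely when $P_3(\mu)=0$. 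Since the Ricci tensor is symmetric, degeneracy is equivalent to having a zero eigenvalue, i.e.\ to the vanishing of this determinant. Thus $g$ has degenerate Ricci tensor if and only if $P_3(\mu) = 0$.

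Second, I will invoke the identity $P_1(\mu)\,P_3(\mu) = C$ from Lemma~\ref{lem:SymmetricPolynomialsHeatInvariants}(3). Together with the positivity $P_1(\mu) > 0$ guaranteed by Lemma~\ref{lem:P1Positive}, this forces $C = 0$ if and only if $P_3(\mu) = 0$. Concatenating this with the previous equivalence yields the corollary.

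There is essentially no obstacle: the whole argument is a two-line consequence of earlier results in the section. The only point requiring a moment's care is the use of Lemma~\ref{lem:P1Positive} to rule out the spurious possibility $P_1(\mu)=0$ in the factorization $C = P_1(\mu)P_3(\mu)$; without that positivity, $C=0$ would not imply $P_3(\mu)=0$. Everything else is a direct unpacking of the Milnor-frame formulas already established.
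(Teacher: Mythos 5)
Your proof is correct and follows essentially the same route as the paper: both reduce Ricci degeneracy to the vanishing of $P_3(\mu)$ via the relation $\nu_{\sigma(1)}=2\mu_{\sigma(2)}\mu_{\sigma(3)}$, and then use $P_1(\mu)P_3(\mu)=C$ together with $P_1(\mu)>0$ to pass between $P_3=0$ and $C=0$. Your determinant computation $\nu_1\nu_2\nu_3=8P_3(\mu)^2$ is just a slightly more explicit packaging of the paper's observation that degeneracy occurs exactly when some $\mu_j$ vanishes.
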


\begin{proof}
The eigenvalues $\nu_1(g)$, $\nu_2(g)$ and $\nu_3(g)$ of the Ricci tensor are related to the Christofel symbols $\mu_1(g)$, $\mu_2(g)$ and $\mu_3(g)$ by $\nu_{\sigma(1)} = 2\mu_{\sigma(2)}\mu_{\sigma(3)}$, for any permutation $\sigma$. Therefore, we see that $g$ has degenerate Ricci tensor if and only if one of  its associated Christoffel symbols $\mu_1(g)$, $\mu_2(g)$ or $\mu_3(g)$ is zero, which is equivalent to $P_3(\mu)$ being zero. The result now follows from Lemmas~\ref{lem:SymmetricPolynomialsHeatInvariants}(3) and \ref{lem:P1Positive}.
\end{proof}

\begin{lem}\label{calculus}
With the notation as in Lemma~\ref{lem:SymmetricPolynomialsHeatInvariants}, if $A>0$ and $B^2>4A^3$, then $$\left( \frac{B-\sqrt{B^2-4A^3}}{2A}\right)^2<A.$$
\end{lem}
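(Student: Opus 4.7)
The plan is to recognize the quantity $x \equiv \frac{B-\sqrt{B^2-4A^3}}{2A}$ as the smaller of the two roots of a quadratic whose product is precisely $A$, and then deduce the inequality from the AM--GM relationship between the roots.

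First I would verify that $x$ satisfies
\begin{equation*}
A x^{2} - B x + A^{2} = 0.
\end{equation*}
This is a direct algebraic check: rewrite $B - 2Ax = \sqrt{B^{2}-4A^{3}}$ and square. Next I would show that $x$ is positive. Since $a_{0}$ and $|\Gamma|$ are positive, $B = 16\pi^{2}/(a_{0}|\Gamma|) > 0$, and under the hypotheses $A > 0$ and $4A^{3} > 0$, so $0 < \sqrt{B^{2}-4A^{3}} < B$, which forces $x > 0$.

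Let $x_{-} = x$ and $x_{+} = \frac{B+\sqrt{B^{2}-4A^{3}}}{2A}$ denote the two roots of $At^{2}-Bt+A^{2}=0$. By Vieta's formulas, $x_{-}+x_{+}=B/A > 0$ and $x_{-}x_{+} = A > 0$, so both roots are positive. Hence $\sqrt{A}$ is the geometric mean of $x_{-}$ and $x_{+}$, and by the AM--GM inequality (or simply because $\sqrt{A}$ lies between the two positive numbers whose product is $A$) we obtain $x_{-} \leq \sqrt{A} \leq x_{+}$. The strict hypothesis $B^{2} > 4A^{3}$ forces $x_{-} \neq x_{+}$, ruling out equality throughout, so $x = x_{-} < \sqrt{A}$, which gives $x^{2} < A$, as desired.

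The only potential subtlety is the positivity of $B$, which relies on the fact that $B$ arises as a volume-normalized expression, and the positivity of $x$ itself; both are immediate from the definitions, so I expect no serious obstacle. The argument is essentially a one-line application of Vieta coupled with AM--GM, once the defining quadratic for $x$ is identified.
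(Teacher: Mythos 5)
Your proof is correct, but it follows a genuinely different route from the paper's. You identify $x=\frac{B-\sqrt{B^{2}-4A^{3}}}{2A}$ as the smaller root of the quadratic $At^{2}-Bt+A^{2}=0$ and invoke Vieta: the product of the two roots is $A^{2}/A=A$, both roots are positive (this is where you correctly use $B>0$, which holds because $B=16\pi^{2}/(a_{0}|\Gamma|)$), and the roots are distinct by the strict hypothesis $B^{2}>4A^{3}$, so $x_{-}^{2}<x_{-}x_{+}=A$. The paper instead regards the quantity as $F(B)$ for the function $F(t)=\frac{t-\sqrt{t^{2}-4A^{3}}}{2A}$, shows $F$ is positive and strictly decreasing on $(2\sqrt{A^{3}},\infty)$, and compares with the boundary value $F(2\sqrt{A^{3}})=\sqrt{A}$ --- a calculus argument, consistent with the lemma's label. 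Your approach is purely algebraic and arguably more self-contained (one line once the quadratic is written down), while the paper's makes the dependence on $B$ as a decreasing function transparent; both arguments rest on the same two facts, namely $B>0$ and the strictness of $B^{2}>4A^{3}$, and both are complete.
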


\begin{proof}
For $x>2\sqrt{A^3}$, the function $$F(x)=\frac{x-\sqrt{x^2-4A^3}}{2A}$$ is differentiable with negative derivative.  As $F(x)>0$ when $x>2\sqrt{A^3}$, $F(B)^2<F(2\sqrt{A^3})^2=A$.
\end{proof}

\begin{lem}\label{lem:P1HeatInvariants}
Let $(\Gamma \backslash S^3, g)$ be a locally homogeneous elliptic three-manifold with first four heat invariants $a_0 \equiv a_0(\Gamma \backslash S^3, g)$, $a_1 \equiv a_1(\Gamma \backslash S^3, g) $, $a_2 \equiv a_2(\Gamma \backslash S^3, g)$ and $a_3 \equiv a_3(\Gamma \backslash S^3, g)$. Then $P_1(\mu)$ is determined by $|\Gamma|$, $a_0$, $a_1$, $a_2$, and $a_3$. Concretely, let $A= A(a_0, a_1)$, $B = B=(|\Gamma|, a_0)$, $C= C(a_0,a_1,a_2)$ and $D = D(a_0, a_3)$ be as in Lemma~\ref{lem:SymmetricPolynomialsHeatInvariants}.  Then $A$, $B$, $C$, and $D$ determine $P_1(\mu)$.  Moreover,

\begin{enumerate}
\item If $A =0$ (i.e., the space is scalar flat), then $P_1(\mu) = -\frac{C}{B}$;
\item If $A< 0$ (i.e., the space has negative scalar curvature), then $C< 0$ and $P_1(\mu) = \frac{B-\sqrt{B^2+4AC}}{2A}$;
\item If $A > 0$ and $C \geq 0$, then $P_1(\mu) = \frac{B+\sqrt{B^2+4AC}}{2A}$;
\item If $C = 0$ (i.e., the metric has degenerate Ricci tensor), then $P_1 = \frac{B}{A}$ 
\end{enumerate}
In particular, $P_1(\mu)$ is determined by $|\Gamma|$, $a_0$, $a_1$, and $a_2$ in all cases except possibly when $A>0$ and $C<0$.
\end{lem}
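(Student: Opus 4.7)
The central tool is the quadratic identity from Lemma~\ref{lem:SymmetricPolynomialsHeatInvariants}(4), namely
$$AP_1(\mu)^2 - BP_1(\mu) - C = 0,$$
whose coefficients are determined by $|\Gamma|$ together with $a_0, a_1, a_2$. Since $B = 16\pi^2/(a_0|\Gamma|) > 0$ and $P_1(\mu) > 0$ by Lemma~\ref{lem:P1Positive}, the plan is to solve this quadratic and isolate the correct root using the sign of $A$ and $C$ and the positivity of $P_1$; in the remaining exceptional case we invoke equation (5) of Lemma~\ref{lem:SymmetricPolynomialsHeatInvariants} which brings in $D$ (equivalently $a_3$).

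First I would dispose of the degenerate cases. When $A = 0$, the quadratic collapses to $-BP_1 = C$, so $P_1 = -C/B$ (and incidentally $C < 0$ follows from $P_1 > 0$ and $B > 0$). When $C = 0$, the equation factors as $P_1(AP_1 - B) = 0$; positivity of $P_1$ forces $A \neq 0$ and yields $P_1 = B/A$. (The possibility $A = 0 = C$ would force $P_1 = 0$, contradicting Lemma~\ref{lem:P1Positive}.)

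For the non-degenerate cases $AC \neq 0$, I apply the quadratic formula $P_1 = (B \pm \sqrt{B^2 + 4AC})/(2A)$ and use Vieta's formulas (sum of roots $B/A$, product $-C/A$) to pick the positive root. In case $A > 0$, $C > 0$, the product is negative so one root is positive and one is negative, forcing $P_1 = (B + \sqrt{B^2+4AC})/(2A)$. In case $A < 0$, positivity of $P_1$ combined with the negative sum $B/A$ forces the product $-C/A$ to be negative, whence $C < 0$; the discriminant $B^2 + 4AC$ then exceeds $B^2$ and a sign check (numerator negative, denominator negative) yields $P_1 = (B - \sqrt{B^2+4AC})/(2A)$.

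The only case not covered by the above is $A > 0$ and $C < 0$, where both roots of the quadratic are positive (sum and product both positive), so equation (4) alone cannot distinguish them. Here I would bring in Lemma~\ref{lem:SymmetricPolynomialsHeatInvariants}(5), a second quadratic in $P_1$ whose constant term involves $D$. Substituting $P_1^2 = (BP_1 + C)/A$ from equation (4) into equation (5) and simplifying produces a linear relation of the form
$$192B(A^2 + C)P_1 + \Phi(A,B,C,D) = 0,$$
so $P_1$ is pinned down by $A,B,C,D$ as long as $A^2 + C \neq 0$. The main obstacle will be the degenerate sub-locus $A^2 + C = 0$ (which genuinely can occur: e.g.\ $(\mu_1,\mu_2,\mu_3) = (1,1,-1/5)$), where both equations (4) and (5) together fail to separate the two candidate roots by linear means. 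I expect to resolve this residual ambiguity by showing that at most one of the two candidate values of $P_1$ produces a real triple $(\mu_1,\mu_2,\mu_3)$ with the prescribed elementary symmetric functions—that is, by checking that the cubic $x^3 - P_1 x^2 + A x - C/P_1$ has non-negative discriminant for only one of the two choices, a reality argument that should be where Lemma~\ref{calculus} enters.
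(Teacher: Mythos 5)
Your proposal is correct and follows essentially the same route as the paper's proof: the same case analysis on the signs of $A$ and $C$ via the quadratic of Lemma~\ref{lem:SymmetricPolynomialsHeatInvariants}(4), the same use of equation (5) to extract a linear relation in $P_1$ (your condition $A^2+C\neq 0$ is exactly the paper's condition that $q_2$ not be a scalar multiple of $q_1$, which forces $C=-A^2$), and the same resolution of the residual locus $C=-A^2$ by requiring the cubic with roots $\mu_1,\mu_2,\mu_3$ to have non-negative discriminant and then invoking Lemma~\ref{calculus} to exclude the smaller root. The only step you leave as a plan rather than carry out---the discriminant computation yielding $P_1^2\geq \frac{27A}{5}>A$---is precisely what the paper executes.
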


\begin{proof} 
Throughout, it will be useful to recall that $P_1$ and $B$ are both positive.

First, we observe that in the case when $A=0$, Lemma~\ref{lem:SymmetricPolynomialsHeatInvariants}-(4) determines $P_1 = -\frac{C}{B}$, as required. 

For the case where $C=0$, Lemma~\ref{lem:SymmetricPolynomialsHeatInvariants}(3) implies $P_3$ is zero, and Lemma~\ref{lem:SymmetricPolynomialsHeatInvariants}(2) implies $P_1 = \frac{B}{A}$. This establishes the first and fourth assertions of the Lemma.

For the remainder of the argument, we assume $A \neq 0$. Then, by Lemma~\ref{lem:SymmetricPolynomialsHeatInvariants}(4), $P_1$ is a positive real root of the quadratic polynomial $$q_1(x) = Ax^2-Bx-C$$ and we conclude 
$$P_1=\frac{B+\sqrt{B^2+4AC}}{2A}\,\,\,\,\, \text{or}\,\,\,\,P_1=\frac{B-\sqrt{B^2+4AC}}{2A}.$$
Consequently, the discriminant of $q_1(x)$ satisfies $B^2 + 4AC \geq 0$. We now demonstrate the validity of assertions two and three in the Lemma.

If $A<0$, then the only possible positive root of $q_1(x)$ is $\frac{B-\sqrt{B^2+4AC}}{2A}$ with $C <0$. Therefore, when $A <0$, we conclude that $C <0$ and 
$$P_1(\mu) = \frac{B-\sqrt{B^2+4AC}}{2A}.$$
Similarly, when $A >0$ and $C \geq 0$, $q_1(x)$ has exactly one positive root and we conclude
$$P_1(\mu) = \frac{B + \sqrt{B^2+4AC}}{2A}.$$  

It remains to demonstrate that when $A >0$ and $C <0$, $P_1(\mu)$ is determined by $A$, $B$, $C$, and $D$.  If the discriminant $B^2+4AC=0$, then $P_1(\mu)=\frac{B}{2A}$. Now assume that $B^2 + 4AC >0$ and both roots of $q_1(x)$ are positive and distinct. To help determine which of these roots is equal to $P_1$, we notice that by Lemma~\ref{lem:SymmetricPolynomialsHeatInvariants}(5), $P_1$ is also a root of the polynomial $$q_2(x) = (192C-288A^2)x^2 + (480AB) x + (184A^3 -576AC -240B^2 -D).$$  If $q_2(x)$ is not a scalar multiple of $q_1(x)$, then $P_1(\mu)$ is the unique common root. Now, suppose $q_2(x)$ is a scalar multiple of $q_1(x)$. Then, comparing the coefficients of $x$, we find $q_2(x) = -480Aq_1(x)$. Next, comparing the quadratic terms of this expression, we find $C = -A^2$ and obtain
$$B^2-4A^3=B^2+4AC>0.$$
Now, since the cubic polynomial $x^3 +P_1(\mu)x^2 + P_2(\mu)x + P_3(\mu)$ has three real roots (i.e., $\mu_1$, $\mu_2$ and $\mu_3$) we see that its discriminant $\Delta$  is nonnegtive:
$$\Delta \equiv P_1^2 P_2^2 - 4 P_2^3 - 4 P_1^3 P_3 + 18 P_1 P_2 P_3 - 27 P_3^2 \geq 0.$$
Using Lemma~\ref{lem:SymmetricPolynomialsHeatInvariants}, we may substitute $P_2(\mu) = A$ and $P_1(\mu)P_2(\mu) = C = -A^2$ to obtain
$$ 0\leq\frac{\Delta \cdot  P_1^2}{A^2}=5P_1^4-22A P_1^2-27A^2=(P_1^2+A)(5P_1^2-27A),$$
which holds if and only if $P_1^2\geq \frac{27A}{5}>A$.  Therefore, by Lemma~\ref{calculus}, $$P_1=\frac{B+\sqrt{B^2+4AC}}{2A} = \frac{B+\sqrt{B^2-4A^3}}{2A}.$$ 
\end{proof}

\begin{thm}\label{thm:FundamentalGroupSameSize}
Let $(\Gamma \backslash S^3, g)$  and $(\Gamma' \backslash S^3, g')$ be locally homogeneous elliptic three-manifolds such that $|\Gamma| = |\Gamma'|$ and $a_j(\Gamma \backslash S^3, g)= a_j(\Gamma' \backslash S^3, g')$ for $j = 0,1,2,3$. Then, $(\Gamma \backslash S^3, g)$  and $(\Gamma' \backslash S^3, g')$ are locally isometric. In the event that 
$(\Gamma \backslash S^3, g)$ has $A \leq 0$ or $A > 0$ with $C\geq 0$, then the conclusion holds when $|\Gamma| = |\Gamma'|$ and $a_j(\Gamma \backslash S^3, g)= a_j(\Gamma' \backslash S^3, g')$ for $j = 0,1,2$.
\end{thm}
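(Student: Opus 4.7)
The plan is to show that the shared hypotheses pin down the $3$-multiset of Christoffel symbols with respect to a Milnor frame, and then to invoke Proposition~\ref{prop:DeterminedByChristoffelSymbols} to conclude local isometry. More precisely, for each of the two manifolds let $\mu = (\mu_1, \mu_2, \mu_3)$ and $\mu' = (\mu_1', \mu_2', \mu_3')$ denote the vectors of Christoffel symbols relative to Milnor frames on $(\Gamma \backslash S^3, g)$ and $(\Gamma' \backslash S^3, g')$. Because the quantities $A$, $B$, $C$, $D$ of Lemma~\ref{lem:SymmetricPolynomialsHeatInvariants} depend only on $|\Gamma|$ and the first four heat invariants (where $B$ uses $|\Gamma|$), our hypotheses guarantee that $A(g) = A(g')$, $B(g) = B(g')$, $C(g) = C(g')$, and $D(g) = D(g')$.

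Next I would extract the three elementary symmetric polynomials of $\mu$ and $\mu'$. From Lemma~\ref{lem:SymmetricPolynomialsHeatInvariants}(1) we get $P_2(\mu) = A = P_2(\mu')$ immediately. Then Lemma~\ref{lem:P1HeatInvariants} applies: since its conclusion is that $P_1(\mu)$ is determined by $(A,B,C,D)$ in all cases, we conclude $P_1(\mu) = P_1(\mu')$. Finally Lemma~\ref{lem:SymmetricPolynomialsHeatInvariants}(2) yields $P_3(\mu) = A P_1(\mu) - B = A P_1(\mu') - B = P_3(\mu')$. By Lemma~\ref{lem:SymmetricPolynomialsUsefulFact}, the $3$-multisets $[\mu_1, \mu_2, \mu_3]$ and $[\mu_1', \mu_2', \mu_3']$ coincide, and since the fundamental groups have the same order, Proposition~\ref{prop:DeterminedByChristoffelSymbols} establishes that $(\Gamma \backslash S^3, g)$ and $(\Gamma' \backslash S^3, g')$ are locally isometric.

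For the second assertion, assume only that $a_j(\Gamma \backslash S^3, g) = a_j(\Gamma' \backslash S^3, g')$ for $j=0,1,2$, so that $A$, $B$, and $C$ still agree but $D$ need not. If $A \leq 0$, or if $A > 0$ and $C \geq 0$, then parts (1)--(4) of Lemma~\ref{lem:P1HeatInvariants} express $P_1(\mu)$ as an explicit function of $A$, $B$, and $C$ alone. Since the agreement of $a_0, a_1, a_2$ and $|\Gamma|$ forces the sign conditions on $A$ and $C$ to hold also for the primed manifold, the same formula gives $P_1(\mu')$, so $P_1(\mu) = P_1(\mu')$, and the argument above proceeds exactly as before.

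The only real obstacle is that in the general case one has to appeal to the full strength of Lemma~\ref{lem:P1HeatInvariants} to recover $P_1$; the ambiguity between the two roots of the quadratic $A x^2 - B x - C = 0$ that arises when $A > 0$ and $C < 0$ is precisely what forces the use of the heat invariant $a_3$ (through $D$), and this is the delicate point already handled in that lemma via the discriminant argument. Once $P_1$ is in hand, the passage from the elementary symmetric functions to the multiset of Christoffel symbols and then to the local isometry class is immediate.
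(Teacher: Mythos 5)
Your proposal is correct and follows essentially the same route as the paper's proof: both recover $P_2(\mu)$ from $a_0,a_1$, use Lemma~\ref{lem:P1HeatInvariants} to recover $P_1(\mu)$ from $|\Gamma|$ and the first four heat invariants (dropping $a_3$ exactly in the cases $A\leq 0$ or $A>0$, $C\geq 0$), deduce $P_3(\mu)$ from Lemma~\ref{lem:SymmetricPolynomialsHeatInvariants}(2), and conclude via Lemma~\ref{lem:SymmetricPolynomialsUsefulFact} and Proposition~\ref{prop:DeterminedByChristoffelSymbols}. No gaps.
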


\begin{proof}
For any locally homogeneous elliptic three-manifold $(\Gamma  \backslash S^3, g)$ let $a_j \equiv a_j(\Gamma  \backslash S^3, g)$ be the $j$-th heat invariant and $\mu = \mu(g) = (\mu_1(g), \mu_2(g), \mu_3(g))$ be the vector of Chrtistoffel symbols with respect to some Milnor frame. Lemmas~\ref{lem:SymmetricPolynomialsHeatInvariants} and \ref{lem:P1HeatInvariants} imply that $P_1(\mu)$ and $P_2(\mu)$ are determined by $|\Gamma|$, $a_0$, $a_1$, $a_2$, and $a_3$ and that $P_2(\mu)$ is determined by $a_0$ and $a_1$. Therefore, for $(\Gamma \backslash S^3, g)$  and $(\Gamma' \backslash S^3, g')$ as in the statement of the theorem, we have $P_j(\mu) = P_j(\mu')$ for $j =1,2,3$. From Lemma~\ref{lem:SymmetricPolynomialsUsefulFact}, it follows that the $3$-multiset of Christoffel symbols is identical for these two spaces and we conclude via Proposition~\ref{prop:DeterminedByChristoffelSymbols} that these locally homogeneous elliptic three-manifolds are locally isometric. 

Under the hypotheses of the second statement, we see that $P_1(\mu)$ and, hence, $P_3(\mu)$ do not depend on $a_3$. This completes the proof.
\end{proof}

\begin{proof}[Proof of Theorem~\ref{thm:IsospectralSets}]
Follows immediately from Theorems~\ref{thm:FundamentalGroupSameSize} and Theorem~\ref{thm:LocHomogEllipticThreeManifolds}. 
\end{proof}

\begin{proof}[Proof of Theorem~\ref{thm:A0A1A2SufficeScalarFlat}]
Checking the definition of $A$ and $C$ in Lemma~\ref{lem:P1HeatInvariants}, we see the first statement of the theorem is a restatement of the last statement of Theorem~\ref{thm:FundamentalGroupSameSize}. The second statement of the theorem follows from the fact that on $S^3$ (respectively, $\SO(3)$) locally isometric left-invariant metrics are isometric.
\end{proof}

\subsection{Proof of Theorem~\ref{thm:AudibillityLocalGeometry}} 
In \cite[Thm. 1.1(2)]{LSS} we established that the non-degeneracy of the Ricci tensor is audible among elliptic three-manifolds. The following shows isospectral elliptic three-manifolds with degenerate Ricci tensor are locally isometric and have fundamental groups of the same order.  

\begin{lem}\label{lem:DegenerateRicci}
Let $(\Gamma_1 \backslash S^3, g_1)$ and $(\Gamma_2 \backslash S^3, g_2)$ be two locally homogeneous elliptic three-manifolds with $a_j(g_1) = a_j(g_2)$ for $j = 0,1,2,3$. If $g_1$ (and, hence, $g_2$) has degenerate Ricci tensor, then $|\Gamma_1| = |\Gamma_2 |$ and $g_1$ and $g_2$ are locally isometric.
\end{lem}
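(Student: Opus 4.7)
The plan is to reduce to Theorem~\ref{thm:FundamentalGroupSameSize} by showing that the hypothesis $g_1, g_2$ share the first four heat invariants and both have degenerate Ricci tensor forces $|\Gamma_1| = |\Gamma_2|$. Using the notation $A, B, C, D$ from Lemma~\ref{lem:SymmetricPolynomialsHeatInvariants} and writing $\mu_i = \mu(g_i)$, the scalars $A$, $C$, and $D$ are determined by $a_0, a_1, a_2, a_3$ alone, while each $B_i = 16\pi^2/(a_0 |\Gamma_i|)$ a priori depends on $|\Gamma_i|$. What I need is a second relation pinning down $B_i$ in terms of shared data.

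The degeneracy hypothesis supplies exactly this. First I would note that by Corollary~\ref{cor:DegenerateRicci}, degenerate Ricci is equivalent to $C = 0$; combined with $P_1(\mu_i) > 0$ from Lemma~\ref{lem:P1Positive} and the identity $C = P_1 P_3$ from Lemma~\ref{lem:SymmetricPolynomialsHeatInvariants}(3), this forces $P_3(\mu_i) = 0$. By Lemma~\ref{lem:ScalVolDegenerateRicci}, a locally homogeneous elliptic three-manifold with degenerate Ricci tensor has positive scalar curvature, so $A > 0$. Then Lemma~\ref{lem:SymmetricPolynomialsHeatInvariants}(2) collapses to the clean relation
\[
B_i \;=\; A \cdot P_1(\mu_i), \qquad i = 1,2.
\]

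Next I would substitute $C = 0$ and $B_i = A P_1(\mu_i)$ into the quadratic-in-$P_1$ identity of Lemma~\ref{lem:SymmetricPolynomialsHeatInvariants}(5); the coefficients of $P_1^2$ combine as $-288 + 480 - 240 = -48$, giving
\[
-48\, A^2\, P_1(\mu_i)^2 + 184\, A^3 - D \;=\; 0,
\]
so $P_1(\mu_i)^2 = (184 A^3 - D)/(48 A^2)$, a quantity depending only on the shared $A$ and $D$. Positivity of $P_1$ then gives $P_1(\mu_1) = P_1(\mu_2)$. Consequently $B_1 = A P_1(\mu_1) = A P_1(\mu_2) = B_2$, forcing $|\Gamma_1| = |\Gamma_2|$, at which point Theorem~\ref{thm:FundamentalGroupSameSize} delivers the local isometry.

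The main obstacle I would anticipate is not in the algebra, but in knowing that the scalar curvature does not vanish: if $A = 0$, the relation $B = AP_1 - P_3$ becomes trivial and provides no leverage on $|\Gamma|$. This is precisely what Lemma~\ref{lem:ScalVolDegenerateRicci} rules out, and since that lemma is established by an independent volume/scalar curvature argument in the paper, I can invoke it as a black box. Everything else reduces to straightforward polynomial manipulation.
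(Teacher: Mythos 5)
Your proof is correct, and it reorganizes the argument differently from the paper. The paper's proof is self-contained: it sets $\mu_1(g_j)=0$, writes out $a_0,a_1,a_2,a_3$ explicitly in the degenerate case, reads off $P_2(\mu(g_1))=P_2(\mu(g_2))$ from the $a_2$ equality, then $P_1(\mu(g_1))=P_1(\mu(g_2))$ from the $a_3$ equality, concludes local isometry via Proposition~\ref{prop:DeterminedByChristoffelSymbols}, and only at the end extracts $|\Gamma_1|=|\Gamma_2|$ from the formula $a_1=\tfrac{16\pi^2}{3|\Gamma|P_1}$. You instead run the $A,B,C,D$ machinery of Lemma~\ref{lem:SymmetricPolynomialsHeatInvariants}: with $C=0$ and $B=AP_1$, relation (5) collapses to $P_1^2=(184A^3-D)/(48A^2)$, so $P_1$ is determined by the heat invariants \emph{without} knowing $|\Gamma|$; then $B=AP_1$ recovers $|\Gamma|$, and Theorem~\ref{thm:FundamentalGroupSameSize} finishes (there is no circularity, since that theorem and Lemma~\ref{lem:ScalVolDegenerateRicci} are proved independently of this lemma). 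Your version isolates more cleanly \emph{why} degeneracy removes the $|\Gamma|$-dependence that obstructs the general case, at the cost of a forward reference to Lemma~\ref{lem:ScalVolDegenerateRicci} for $A>0$. That reference is legitimate but also avoidable: if $C=0$ and $A=0$, then relation (4) of Lemma~\ref{lem:SymmetricPolynomialsHeatInvariants} gives $BP_1=0$ with $B>0$, contradicting $P_1>0$ from Lemma~\ref{lem:P1Positive}; so $A\neq 0$ follows from the machinery you are already using, and the sign of $A$ is never actually needed in your algebra.
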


\begin{proof}
It will be useful to recall that for any real numbers $\alpha, \beta, \gamma$ we have 
$$(x+\alpha)(x+\beta)(x+\gamma) = x^3 + P_1(\alpha, \beta, \gamma)x^2 + P_2(\alpha, \beta, \gamma)x + 
P_3(\alpha, \beta, \gamma),$$
where $P_j(x,y,z)$ is the $j$-th elementary symmetric polynomial in three variables.

For $j=1,2$, let $\mu(g_j) = (\mu_1(g_j), \mu_2(g_j), \mu_3(g_j))$ denote the Christoffel symbols of $(\Gamma_j \backslash S^3, g_j)$ with respect to some Milnor frame. The hypotheses of the lemma imply $P_3(\mu(g_1)) = P_3(\mu(g_2)) = 0$. Without loss of generality we may assume $\mu_1(g_1)$ and  $\mu_1(g_2)$ are both zero. Then, by Theorem~\ref{thm:HeatInvariantsSymmetricPolysChristoffel} and Equation~\eqref{eq:A0Elliptic3ManifoldChristoffel}, for $j=1,2$ we have 
\begin{equation}\label{eqn:DegenerateA0}
a_0(g_j) = \frac{16 \pi^2}{|\Gamma_j| P_1(\mu(g_j)) P_2(\mu(g_j))} 
\end{equation}
\begin{equation}\label{eqn:DegenerateA1}
a_1(g_j) = \frac{16\pi^2}{3 |\Gamma_j| P_1(\mu(g_j))}
\end{equation}
\begin{equation}\label{eqn:DegenerateA2}
a_2(g_j) = \frac{a_0}{10}P_2(\mu(g_j))^2
\end{equation}

Since $a_0(g_j)$ and $P_1(\mu(g_j))$ are both positive, Equation~\eqref{eqn:DegenerateA0} implies $P_2(\mu(g_j))$ is positive. Therefore, it follows from Equation~\eqref{eqn:DegenerateA2} and the fact that $a_2(g_1) = a_2(g_2)$ that $P_2(\mu(g_1)) = P_2(\mu(g_2))$. 
Now, since $P_3(\mu(g_1)) = P_3(\mu(g_2)) = 0$, Equation~\eqref{eq:A3Christoffel} shows that for $j=1,2$
$$a_3(g_j) = \frac{a_0(g_j)}{7!}\left( 184P_2(\mu(g_j))^3 - 48P_1(\mu(g_j))^2P_2(\mu(g_j))^2\right).$$
Finally, since $P_2(\mu(g_1)) = P_2(\mu(g_2))$ and since $P_1(\mu(g_1))$ and $P_1(\mu(g_2))$  are both positive, the equality $a_3(g_1) = a_3(g_2)$ implies $P_1(\mu(g_1)) = P_1(\mu(g_2))$.
Therefore, the multisets $$[\mu_1(g_1), \mu_2(g_1), \mu_3(g_1)]\,\,\,\,\,\,\,\, \text{and}\,\,\,\,\,\,\,\, [\mu_1(g_2), \mu_2(g_1), \mu_3(g_1)]$$ are equal and we conclude $g_1$ and $g_2$ are locally isometric. The statement about the order of the fundamental groups follows from Equation~\eqref{eqn:DegenerateA1}.
\end{proof}

The proof of Theorem~\ref{thm:AudibillityLocalGeometry} is now easily obtained.

\begin{proof}[Proof of Theorem~\ref{thm:AudibillityLocalGeometry}]
Let $(M_1 = \Gamma_1 \backslash S^3, g_1)$ and $(M_2 = \Gamma_2 \backslash S^3, g_2)$ be as in the statement of the theorem and let $\Ric_1$ and $\Ric_2$ denote their respective Ricci tensors. By Theorem~\ref{thm:FundamentalGroupSameSize}, the groups $\Gamma_1$ and $\Gamma_2$ have distinct orders. Now, by \cite[Theorem 1.1]{LSS}, $\Ric_1$ and $\Ric_2$ are both degenerate or both non-degenerate. Since the manifolds are assumed to have distinct local geometry, it follows from Lemma~\ref{lem:DegenerateRicci} that $\Ric_1$ and $\Ric_2$ must be non-degenerate and it follows from \cite[Theorem 1.1]{LSS} that $g_1$ and $g_2$ must be sufficiently far from a metric of positive curvature. 
\end{proof}

\section{Recovering a metric from its curvature and volume}\label{sec:ExtraStuff}

As we noted in the introduction, homogeneous surfaces with identical curvature tensors---see p.~\pageref{IdenticalCurvature} for definition---must be locally isometric. However, there are continuous families of left-invariant metrics on $S^3$ (respectively $\SO(3)$ and $\SL(2, \R)$) with identical curvature tensor that are pairwise locally non-isometric \cite{Lastaria, ScWo1, ScWo2}. We conclude this article by showing that our framework allows one to observe that these ambiguities can be resolved in the presence of volume. 
 
We will say that two locally homogeneous elliptic three-manifolds $(\Gamma_1 \backslash S^3, g_1)$ and $(\Gamma_2 \backslash S^3, g_2)$ are said to be \emph{isocurved} if (up to permutation) they have identical principal curvatures. Then, the Ricci tensors of two isocurved locally homogeneous elliptic three-manifolds are either both degenerate or both non-degenerate.

\begin{thm}\label{thm:Isocurved}
Let $(\Gamma_1 \backslash S^3, g_1)$ and $(\Gamma_2 \backslash S^3, g_2)$ be isocurved locally homogeneous elliptic three-manifolds.  
\begin{enumerate}
\item If their respective Ricci tensors are non-degenerate, then $(\Gamma_1 \backslash S^3, g_1)$ and $(\Gamma_2 \backslash S^3, g_2)$ are locally isometric. In particular, in the event that $\Gamma_1 = \Gamma _2$ is not conjugate to $\Gamma_{q;1,p}$, where $q \geq 3$, we may conclude that the spaces are isometric.

\item If their respective Ricci tensors are degenerate and $|\Gamma_1| = |\Gamma_2|$, then $(\Gamma_1 \backslash S^3, g_1)$ and $(\Gamma_2 \backslash S^3, g_2)$ are locally isometric if and only if they have the same volume. In particular, in the event that $\Gamma_1 = \Gamma_2$ is not conjugate to $\Gamma_{q;1,p}$, where $q \geq 3$, we may replace ``locally isometric'' with ``isometric.''
\end{enumerate}
\end{thm}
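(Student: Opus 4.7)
The plan is to recover the multiset of Christoffel symbols $\{\mu_1(g_j),\mu_2(g_j),\mu_3(g_j)\}$ of the covering left-invariant metric of each $g_j$ from the isocurved hypothesis (supplemented by volume data in the degenerate case), and then invoke \pref{prop:DeterminedByChristoffelSymbols} to obtain local isometry. The upgrade from local isometry to isometry in the ``in particular'' clauses will follow directly from \tref{thm:LocHomogEllipticThreeManifolds}: when $\Gamma$ is not conjugate to $\Gamma_{q;1,p}$ with $q\geq 3$, each isometry class of a left-invariant metric on $S^3$ covers a unique isometry class of locally homogeneous metrics on $\Gamma\backslash S^3$.

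For assertion (1), Equation~\eqref{eqn:PrincipalCurvaturesChristoffelSymbols} gives $K_{ij}+K_{ik}=2\mu_j\mu_k$ for distinct $i,j,k$, so the three principal curvatures, taken as an unordered triple, determine the unordered triple of pairwise products $\mu_j\mu_k$. Non-degeneracy of the Ricci tensor is equivalent to $\mu_i\neq 0$ for every $i$ (since the Ricci eigenvalues are $2\mu_j\mu_k$), so every pairwise product is nonzero and we may recover each $\mu_i^2$ via the identity $\mu_i^2=(\mu_i\mu_j)(\mu_i\mu_k)/(\mu_j\mu_k)$. This determines the triple $(\mu_1,\mu_2,\mu_3)$ up to a simultaneous sign flip, and \lref{lem:P1Positive} selects the correct sign by requiring $P_1(\mu)>0$. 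Consequently the two Christoffel-symbol multisets coincide, and \pref{prop:DeterminedByChristoffelSymbols} yields local isometry.

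For assertion (2), positivity of the metric eigenvalues combined with \eqref{eq:ChristoffelToMetricEigenvalues} forces each pairwise sum $\mu_i+\mu_j$ to be positive, so at most one Christoffel symbol can vanish; degeneracy of the Ricci tensor therefore means that exactly one---say $\mu_1$---is zero and the remaining two are positive. The principal curvatures now collapse to $\{\mu_2\mu_3,\mu_2\mu_3,-\mu_2\mu_3\}$, which alone reveals only the product $\mu_2\mu_3$. The hypothesis $|\Gamma_1|=|\Gamma_2|$ together with equal quotient volumes gives $\vol(g_1)=\vol(g_2)$, which via \eqref{eq:VolChristoffel} translates into equality of $\mu_2(g_j)\mu_3(g_j)(\mu_2(g_j)+\mu_3(g_j))$; this supplies the missing sum $\mu_2(g_j)+\mu_3(g_j)$, and together with the already-known product it determines the pair $\{\mu_2(g_j),\mu_3(g_j)\}$, hence the full Christoffel-symbol multiset. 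One more appeal to \pref{prop:DeterminedByChristoffelSymbols} gives local isometry, and the converse---that locally isometric quotients with identical fundamental group order share the same volume---is immediate.

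The principal obstacle is the sign ambiguity in assertion (1): because the principal curvatures are invariant under the simultaneous negation $(\mu_1,\mu_2,\mu_3)\mapsto(-\mu_1,-\mu_2,-\mu_3)$, recovering the genuine Christoffel multiset demands an additional scalar constraint, which \lref{lem:P1Positive} supplies via $P_1(\mu)>0$. In (2) the parallel insensitivity of the principal curvatures to the individual values of $\mu_2$ and $\mu_3$ is precisely what motivates placing volume in the hypothesis. The promotion of local isometry to isometry in the ``in particular'' assertions then reduces immediately to \tref{thm:LocHomogEllipticThreeManifolds}.
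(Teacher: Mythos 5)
Your proposal is correct and follows essentially the same route as the paper's proof: recover the multiset of Christoffel symbols from the principal curvatures (via Equation~\eqref{eqn:PrincipalCurvaturesChristoffelSymbols}), supplemented by the volume identity in the degenerate case, and then invoke Proposition~\ref{prop:DeterminedByChristoffelSymbols} together with Theorem~\ref{thm:LocHomogEllipticThreeManifolds} for the ``in particular'' clauses. The only cosmetic difference is in part (1), where you resolve the global sign ambiguity by citing $P_1(\mu)>0$ (Lemma~\ref{lem:P1Positive}) while the paper instead normalizes so that two of the Christoffel symbols are positive using Equation~\eqref{eq:ChristoffelToMetricEigenvalues}; these are equivalent.
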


\begin{proof}
Throughout, we let $\mu_1(g_j)$, $\mu_2(g_j)$ and $\mu_3(g_j)$ denote the Christoffel symbols of $(\Gamma_j \backslash S^3, g_j)$, for $j =1,2$.

\begin{enumerate}
\item Since $g_1$ and $g_2$ each have non-degenerate Ricci tensor, the associated Christoffel symbols are all non-zero. Also, from Equation~\eqref{eq:ChristoffelToMetricEigenvalues}, we conclude at least two of the Christoffel symbols are positive. Without loss of generality (and after possibly relabelling the symbols), we may assume $\mu_2(g_j)$ and $\mu_3(g_j)$ are positive, for $j =1,2$. Using Equation~\eqref{eqn:PrincipalCurvaturesChristoffelSymbols} we find
$$\mu_1(g_1)^2 = \frac{\mu_1(g_1)\mu_2(g_1)\mu_1(g_1)\mu_3(g_1)}{\mu_2(g_1)\mu_3(g_1)} =  \frac{\mu_1(g_2)\mu_2(g_2)\mu_1(g_2)\mu_3(g_2)}{\mu_2(g_2)\mu_3(g_2)} = \mu_1(g_2)^2.$$
Similarly, $\mu_2(g_1)^2 = \mu_2(g_2)^2$ and $\mu_3(g_1)^2 = \mu_3(g_2)^2$. Then, because we have assumed $\mu_2(g_j)$ and $\mu_3(g_j)$ are both positive, for $j =1,2$, we have   $\mu_k(g_1) = \mu_k(g_2)$ for $k = 2,3$. But, since Equation~\eqref{eqn:PrincipalCurvaturesChristoffelSymbols} implies $\mu_1(g_1)\mu_2(g_1) = \mu_1(g_2) \mu_2(g_2)$, we also have $\mu_1(g_1) = \mu_1(g_2)$. Therefore, by Proposition~\ref{prop:DeterminedByChristoffelSymbols}, $g_1$ and $g_2$ are locally isometric metrics. The second part of the statement follows from the first by applying the classification of locally homogeneous elliptic three-manifolds (Theorem~\ref{thm:LocHomogEllipticThreeManifolds}). 

\item Since both $g_1$ and $g_2$ both have degenerate Ricci tensors (equivalently, $P_3$ vanishes on $\mu(g_1) = (\mu_1(g_1), \mu_2(g_1), \mu_3(g_1))$  and $\mu(g_2) = (\mu_1(g_2), \mu_2(g_2), \mu_3(g_2))$), each metric has at least one Christoffel symbol that is zero. Without loss of generality, assume $\mu_1(g_1) = \mu_1(g_2) =0$. Since $g_1$ and $g_2$ are isocurved, Equation~\eqref{eqn:PrincipalCurvaturesChristoffelSymbols} implies  
$$P_2(\mu(g_1)) = \mu_2(g_1) \mu_3(g_1) = \mu_2(g_2) \mu_3(g_2) = P_2(\mu(g_2)).$$
Also, observe that, since $|\Gamma_1| = |\Gamma_2|$ and $\vol(g_1) = \vol(g_2)$, the volumes of the universal covering metrics $\tilde{g}_1$ and $\tilde{g}_2$ are equal. Therefore, we may use Equation~\eqref{eq:VolumeAndSymmetricPolynomials} to conclude that 
$$P_1(\mu(g_1)) = P_1(\mu(g_2)).$$
Then, by Proposition~\ref{prop:DeterminedByChristoffelSymbols}, $g_1$ and $g_2$ are locally isometric.
The second part of the statement follows from the first by applying the classification of locally homogeneous elliptic three-manifolds (Theorem~\ref{thm:LocHomogEllipticThreeManifolds}).
\end{enumerate}

\end{proof}

\begin{proof}[Proof of Theorem~\ref{thm:VolCurvature}]
Recall that two Riemannian manifolds $(M,g)$ and $(M', g')$ are said to have identical curvature tensors if for each $p \in M$ and $p' \in M'$ there is a linear isometry $F: (T_pM, g_p) \to (T_{p'}M', g'_{p'})$ such that $F^*R'_{p'} = R_p$. Now, let $M = \Gamma \backslash S^3$ be a spherical three-manifold and suppose $g$ and $g'$ are locally homogeneous metrics on $M$ with identical curvature tensor. Then,  $g$ and $g'$ are isocurved metrics and $g$ has a non-degenerate Ricci tensor if and only if $g'$ has a non-degenerate Ricci tensor. Therefore, when these metrics have the same volume, Theorem~\ref{thm:Isocurved} implies $g$ and $g'$ are locally isometric. The last statement then follows form Theorem~\ref{thm:LocHomogEllipticThreeManifolds}(3).
\end{proof}

We conclude this article by noting that among left-invariant metrics with degenerate Ricci tensor $S^3$ admits a unique (up to isometry) metric with a prescribed volume and (positive) scalar curvature. First, we provide a lemma.

\begin{lem}\label{lem:ScalVolDegenerateRicci}
If $(\Gamma \backslash S^3, g)$ is a locally homogeneous elliptic three-manifold with degenerate Ricci tensor, then
\begin{enumerate}
\item $\Scal(g) >0$, and
\item $\left(\frac{32 \pi^2}{\Scal(g) \vol(g)|\Gamma|} \right)^2 -2 \Scal(g) \geq 0.$
\end{enumerate}
\end{lem}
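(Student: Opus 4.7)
The plan is to reduce everything to elementary relations among the Christoffel symbols $\mu=(\mu_1,\mu_2,\mu_3)$ of $g$ with respect to a Milnor frame. Since the Ricci eigenvalues satisfy $\nu_{\sigma(1)}=2\mu_{\sigma(2)}\mu_{\sigma(3)}$, degeneracy of $\Ric$ means at least one Christoffel symbol vanishes; after relabelling I will assume $\mu_1=0$, so that $P_1(\mu)=\mu_2+\mu_3$, $P_2(\mu)=\mu_2\mu_3$, and $P_3(\mu)=0$.

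For part (1), I will invoke Equation~\eqref{eq:ChristoffelToMetricEigenvalues} applied to the covering left-invariant metric: the metric eigenvalues $\eta_i^2$ are positive multiples of $\mu_{\sigma(2)}+\mu_{\sigma(3)}$, so all three pairwise sums of Christoffel symbols are strictly positive. With $\mu_1=0$ this forces $\mu_2>0$ and $\mu_3>0$, hence $\Scal(g)=2P_2(\mu)=2\mu_2\mu_3>0$ by Equation~\eqref{eq:S}.

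For part (2), I will compute $\Scal(g)\,\vol(g)\,|\Gamma|$ using Equation~\eqref{eq:A0Elliptic3ManifoldChristoffel}, which gives $\vol(g)=a_0(\Gamma\backslash S^3,g)=\frac{16\pi^2}{|\Gamma|\,(P_1P_2-P_3)}$. With $P_3=0$ this simplifies to
\begin{equation*}
\Scal(g)\,\vol(g)\,|\Gamma|=2P_2\cdot\frac{16\pi^2}{P_1P_2}=\frac{32\pi^2}{P_1},
\end{equation*}
so $\frac{32\pi^2}{\Scal(g)\,\vol(g)\,|\Gamma|}=P_1(\mu)=\mu_2+\mu_3$. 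Consequently
\begin{equation*}
\left(\frac{32\pi^2}{\Scal(g)\,\vol(g)\,|\Gamma|}\right)^{2}-2\Scal(g)=(\mu_2+\mu_3)^2-4\mu_2\mu_3=(\mu_2-\mu_3)^2\ge 0,
\end{equation*}
which is the desired inequality.

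There is no real obstacle here; the entire argument is an algebraic identification once the Ricci-degeneracy hypothesis is translated into a vanishing Christoffel symbol. The only mild care needed is to verify that one genuinely has $\mu_2,\mu_3>0$ rather than merely non-zero of the correct sign, which is where the positivity of the metric eigenvalues $\eta_i^2$ in Equation~\eqref{eq:ChristoffelToMetricEigenvalues} is essential.
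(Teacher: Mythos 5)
Your proof is correct and follows essentially the same route as the paper: translate Ricci degeneracy into a vanishing Christoffel symbol, use the volume formula to identify $\frac{32\pi^2}{\Scal(g)\vol(g)|\Gamma|}$ with $P_1(\mu)=\mu_2+\mu_3$, and read off both claims. The only cosmetic differences are that for (1) you deduce $\mu_2,\mu_3>0$ directly from the positivity of all metric eigenvalues rather than citing Lemma~\ref{lem:P1Positive}, and for (2) the paper obtains the inequality from the realness of a root of the quadratic $x^2-P_1x+P_2$, whereas you write its discriminant explicitly as $(\mu_2-\mu_3)^2$.
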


\begin{proof}
Throughout this argument we let $S = \Scal(g)$ and $V = \vol(g)$. The hypothesis that $g$ has degenerate Ricci tensor is equivalent to $P_3(\mu) =0$, where $\mu = (\mu_1, \mu_2, \mu_3)$ is the vector of Christoffel symbols of $g$. Therefore, at least one of the $\mu_j$'s is zero. Without loss of generality, we may assume $\mu_1 =0$. Then, by Equation~\eqref{eq:VolumeAndSymmetricPolynomials} applied to the universal covering metric $\tilde{g}$ which has $\vol(\tilde{g}) = \vol(g) |\Gamma|$,
$$\mu_2 + \mu_3 = P_1(\mu) = \frac{32 \pi^2}{SV|\Gamma|}.$$
And, since $P_1(\mu)$ is positive (see Lemma~\ref{lem:P1Positive}), we determine that $S$ is positive.

To verify the second statement, we solve the equation above for $\mu_2$  and combine with $\frac{1}{2}S =P_2(\mu) = \mu_2\mu_3$ to obtain
$$\mu_3^2 - \frac{32 \pi^2}{SV|\Gamma|} \mu_3 + \frac{1}{2}S = 0.$$
Therefore, $$\mu_3 = \frac{\frac{32\pi^2}{SV |\Gamma|} \pm \sqrt{\left(\frac{32\pi^2}{SV|\Gamma|}\right)^2 -2S}}{2}$$ and the second statement follows from the fact that $\mu_3$ is a real number.
\end{proof}

\begin{thm}
For any $S, V >0$ satisfying $\left(\frac{32 \pi^2}{SV} \right)^2 -2 S \geq 0$, up to isometry, there is a unique left-invariant metric $g$ with degenerate Ricci tensor on $S^3$ (respectively, $\SO(3)$) satisfying $S = \Scal(g)$,  $V = \vol(g)$.
\end{thm}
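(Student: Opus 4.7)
The plan is to combine the explicit description of the Christoffel symbols of a degenerate-Ricci left-invariant metric developed in Lemma~\ref{lem:ScalVolDegenerateRicci} with the local-isometry classification in Proposition~\ref{prop:DeterminedByChristoffelSymbols} and the rigidity statement in Corollary~\ref{cor:MetricEigenvaluesSO(3)}.

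For uniqueness, let $g$ be a left-invariant metric on $S^3$ (resp.\ $\SO(3)$) with degenerate Ricci tensor, scalar curvature $S$, and volume $V$, and let $\mu=(\mu_1,\mu_2,\mu_3)$ be the Christoffel symbols associated to a Milnor frame. Since $P_3(\mu)=\mu_1\mu_2\mu_3=0$, after a relabelling I may assume $\mu_1=0$. Equation~\eqref{eq:S} then gives $P_2(\mu)=\mu_2\mu_3=S/2$, while Equation~\eqref{eq:A0Elliptic3ManifoldChristoffel} (applied with $|\Gamma|=1$ or $2$ according to whether the base is $S^3$ or $\SO(3)$) gives $P_1(\mu)=\mu_2+\mu_3=\tfrac{32\pi^2}{SV|\Gamma|}$. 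Thus $\mu_2,\mu_3$ are the two roots of $t^2-\tfrac{32\pi^2}{SV|\Gamma|}t+\tfrac{S}{2}=0$, and the $3$-multiset $[\mu_1,\mu_2,\mu_3]$ is determined by $S$, $V$, and $|\Gamma|$. By Proposition~\ref{prop:DeterminedByChristoffelSymbols} the metric $g$ is determined up to local isometry, and since Corollary~\ref{cor:MetricEigenvaluesSO(3)} (together with parts (1) and (3a) of Theorem~\ref{thm:LocHomogEllipticThreeManifoldsV2}) shows that left-invariant metrics on $S^3$ or $\SO(3)$ which are locally isometric are in fact isometric, uniqueness follows.

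For existence, fix $S,V>0$ satisfying the discriminant inequality (interpreted with the appropriate factor of $|\Gamma|$ for the $\SO(3)$ case). Set $\mu_1:=0$ and take $\mu_2,\mu_3$ to be the roots of the quadratic above; by hypothesis these are real, and since their sum and product are both positive, they are positive. Using Equation~\eqref{eq:ChristoffelToMetricEigenvalues} I define three positive numbers $\eta_1^2,\eta_2^2,\eta_3^2$, which I in turn use to build a left-invariant metric on $S^3$ whose Milnor frame is obtained by rescaling the $\langle\cdot,\cdot\rangle_0$-orthonormal basis diagonalizing the associated $\Omega$. Equations~\eqref{eq:S} and \eqref{eq:A0Elliptic3ManifoldChristoffel} then verify $\Scal(g)=S$ and $\vol(g)=V$, and the vanishing of $P_3(\mu)$ forces a degenerate Ricci tensor. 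For $\SO(3)$, the antipodal map is an isometry of this metric (it is left-invariant and acts as $-\mathrm{Id}$, hence preserves $\langle\cdot,\cdot\rangle$), so the metric descends to $\SO(3)$ with the desired scalar curvature and volume.

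The only delicate point is bookkeeping the factor $|\Gamma|$ consistently: one must read the discriminant condition as $\bigl(\tfrac{32\pi^2}{SV|\Gamma|}\bigr)^2-2S\geq 0$, which matches the statement verbatim for $S^3$ and becomes the corresponding sharper condition $\bigl(\tfrac{16\pi^2}{SV}\bigr)^2-2S\geq 0$ for $\SO(3)$. Once this is tracked correctly, the existence construction goes through verbatim and uniqueness is a direct consequence of Proposition~\ref{prop:DeterminedByChristoffelSymbols} and Corollary~\ref{cor:MetricEigenvaluesSO(3)}. I do not anticipate any serious obstacle beyond this accounting.
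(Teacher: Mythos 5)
Your proof is correct and follows essentially the same route as the paper: reduce, via $P_3(\mu)=0$, $P_2(\mu)=S/2$ and $P_1(\mu)=\tfrac{32\pi^2}{SV|\Gamma|}$, to the quadratic $t^2-\tfrac{32\pi^2}{SV|\Gamma|}\,t+\tfrac{S}{2}=0$ for the two nonzero Christoffel symbols, then pass to the metric eigenvalues via Equation~\eqref{eq:ChristoffelToMetricEigenvalues} and invoke the eigenvalue classification of left-invariant metrics for uniqueness and the reverse construction for existence. Your bookkeeping remark is also well taken: the discriminant condition should carry the factor $|\Gamma|$, so that it reads $\bigl(\tfrac{16\pi^2}{SV}\bigr)^2-2S\geq 0$ in the $\SO(3)$ case, a point the paper's own proof handles implicitly by writing $|\Gamma|$ throughout.
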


\begin{proof}
Fix $S, V >0$ and suppose that $g$ is a metric with degenerate Ricci tensor on $S^3$ (respectively, $\SO(3)$) such that $\Scal(g) = S$ and $\vol(g) =V$, then following the proof of the previous lemma we find the $3$-multiset of Christoffel symbols of $g$ is given by 
$$\left[ 0, \frac{\frac{32\pi^2}{SV|\Gamma|} - \sqrt{\left(\frac{32\pi^2}{SV|\Gamma|}\right)^2 -2S}}{2}, \frac{\frac{32\pi^2}{SV|\Gamma|} + \sqrt{\left(\frac{32\pi^2}{SV|\Gamma|}\right)^2 -2S}}{2}\right],$$ 
where $|\Gamma|$ is one (respectively, two).
Since this $3$-multiset consists of zero and two positive numbers (counting multiplicities), the volume assumption and Equation~\eqref{eq:ChristoffelToMetricEigenvalues} show that it determines a $3$-multiset $[x,y,z]$ of positive numbers and, hence, a unique left-invariant metric on $S^3$ (respectively, $\SO(3)$). A computation shows this metric has the desired volume and scalar curvature.
\end{proof}


\end{document}